\documentclass[11pt,oneside]{amsart}  

\usepackage{amsmath}
\usepackage{amsfonts}
\usepackage{amssymb}
\usepackage{amsthm}
\usepackage{mathrsfs}
\usepackage{mathtools}
\usepackage{dsfont}
\usepackage{tikz-cd}
\usepackage[all]{xy}
\usepackage{pdfpages}
\usepackage{enumerate}
\usepackage{float}
\usepackage{todonotes}
\usepackage{graphicx}
\usepackage{xcolor}
\usepackage[colorlinks,linkcolor=blue,urlcolor=blue,citecolor=blue,destlabel=true]{hyperref}
\definecolor{darkspringgreen}{rgb}{0.09, 0.6, 0.1}

\usepackage[capitalise]{cleveref}
\numberwithin{equation}{section}

\newtheorem{thm}{Theorem}[section]
\newtheorem*{thm*}{Theorem}
\newtheorem{theorem}[thm]{Theorem}
\newtheorem{cor}[thm]{Corollary}

\newtheorem{lem}[thm]{Lemma}

\theoremstyle{definition}
\newtheorem{defn}[thm]{Definition}
\newtheorem*{defn*}{Definition}
\newtheorem{definition}[thm]{Definition}

\newtheorem{notation}[thm]{Notation}

\newtheorem{example}[thm]{Example}

\newtheorem{ex}[thm]{Example}

\newtheorem{rem}[thm]{Remark}
\newtheorem{remark}[thm]{Remark}
\newtheorem{warn}[thm]{Warning}

\newtheorem{convention}[thm]{Convention}

\newcommand{\Ab}{\mathrm{Ab}}

\newcommand{\id}{\mathrm{id}}

\DeclareMathOperator{\Rep}{Rep}

\DeclareMathOperator*{\colim}{colim}

\DeclareMathOperator{\Hom}{Hom}

\newcommand{\op}{\mathrm{op}}

%

%


\newcommand{\mF}{\underline{\mathbb{F}}_2}

\newcommand{\res}{\mathrm{res}}

\newcommand{\Z}{\mathbb{Z}}
\newcommand{\R}{\mathbb{R}}

\newcommand{\F}{\mathbb{F}}
\newcommand{\pt}{\mathrm{pt}}

\newcommand{\cO}{\mathcal{O}}
\newcommand{\Top}{\mathrm{Top}}

\newcommand{\upi}{\underline{\pi}}

\newcommand{\RO}{RO}

\newcommand{\Nat}{\mathrm{Nat}}

\newcommand{\cV}{\mathcal{V}}
\newcommand{\vV}{v\mathcal{V}}
\newcommand{\Th}{\mathrm{Th}}
\newcommand{\mono}[1]{m_{#1}}

\newcommand{\wH}{\widetilde{H}}

\newcommand{\CWg}{\text{CW}(\gamma)}

\newcommand{\lax}{\mathrm{lax}}

\newcommand{\PTop}[2]{\mathrm{Top}_{/#2}^{#1}}
\newcommand{\PTopb}[2]{\mathrm{Top}_{#2}^{#1}}

\newcommand{\hPTopb}[2]{\mathrm{hTop}_{#2}^{#1}}

\newcommand{\hPTop}[2]{\mathrm{hTop}_{/#2}^{#1}}

\newcommand{\pPTopb}[2]{\pi\mathrm{Top}_{#2}^{#1}}

\newcommand{\pPTop}[2]{\pi\mathrm{Top}_{/#2}^{#1}}

\newcommand{\pPTopblax}[2]{\pi\mathrm{Top}_{#2,\mathrm{lax}}^{#1}}

\newcommand{\PSH}[2]{\mathcal{SH}_{#2}^{#1}}
\newcommand{\ldeg}[2]{\varepsilon_{#2}}

\newcommand{\tr}{\mathrm{tr}}

\newcommand{\uM}{\underline{M}}
\newcommand{\uN}{\underline{N}}
\newcommand{\uC}{\underline{C}}
\newcommand{\uZ}{\underline{\mathbb{Z}}}

\newcommand{\cC}{\mathcal{C}}

\newcommand{\Gpds}{\mathrm{Gpds}}

\newcommand{\taut}{\mathcal{L}}
\newcommand{\pars}[1]{\left( #1 \right)}

\newcommand{\RPtwist}{\mathbb{P}(\mathbb{R}^{3,1})}

\raggedbottom

\newcommand{\Fin}{\mathrm{Fin}}
\newcommand{\Span}{\mathrm{Span}}

\newcommand{\FinG}{\mathrm{Fin}^G}

\newcommand{\sorb}[1]{\mathcal{B}^{#1}}
\newcommand{\Burn}[1]{\mathrm{Burn}^{#1}}

\newcommand{\BBurn}[2]{\mathrm{Burn}^{#1}_{#2}}
\newcommand{\Bsorb}[2]{\mathcal{B}^{#1}_{#2}}
\newcommand{\BFin}[2]{\mathrm{Fin}^{#1}_{#2}}


\tolerance=1500

\newcommand{\period}    {{\makebox[0pt][l]{\hspace{2pt} .}}}

\author[Beaudry]{Agn\`es Beaudry}
\address{Department of Mathematics, University of Colorado, Boulder, CO 80309, USA}
\email{agnes.beaudry@colorado.edu}
\author[Lewis]{Chloe Lewis}
\address{Department of Mathematics, University of Wisconsin-Eau Claire, Eau Claire, WI 54701, USA}
\email{lewischl@uwec.edu}
\author[May]{Clover May}
\address{Department of Mathematical Sciences\\NTNU\\ 7491 Trondheim, Norway}
\email{clover.may@ntnu.no}
\author[Pauli]{Sabrina Pauli}
\address{Fachbereich Mathematik\\TU Darmstadt\\64289 Darmstadt, Germany}
\email{pauli@mathematik.tu-darmstadt.de}
\author[Tatum]{Elizabeth Tatum}
\address{Mathematisches Institut\\Universität Bonn\\53115 Bonn, Germany}
\email{tatum@math.uni-bonn.de}

\title[Guide to Parametrized Cohomology]{A Guide to Equivariant Parametrized Cohomology}

\begin{document}

\begin{abstract}
This article investigates equivariant parametrized cellular cohomology, a cohomology theory introduced by Costenoble--Waner for spaces with an action by a compact Lie group $G$.  The theory extends the $\RO(G)$-graded cohomology of a $G$-space $B$ to a cohomology graded by $\RO(\Pi B)$, the representations of the equivariant fundamental groupoid of $B$. This paper is meant to serve as a guide to this theory and contains some new computations.

We explain the key ingredients for defining parametrized cellular cohomology when $G$ is a finite group, with particular attention to the case of the cyclic group $G=C_2$. We compute some examples and observe that $\RO(\Pi B)$ is not always free.  When $G$ is the trivial group, we explain how to identify equivariant parametrized cellular cohomology with cellular cohomology in local coefficients. 
Finally, we illustrate the theory with some new computations of  parametrized cellular cohomology for several spaces with $G = C_2$ and $G=C_4$.
\end{abstract}

\maketitle

\tableofcontents


\section{Introduction}

Computations in $\RO(G)$-graded homotopy and (co)homology have been fundamental to modern equivariant homotopy theory, despite their complexity. One reason is that $\RO(G)$-graded invariants capture quite a large amount of equivariant information.  Yet in general, ordinary $\RO(G)$-graded cohomology theories lack equivariant analogues of characteristic classes, such as Thom classes for $G$-vector bundles and fundamental classes for $G$-manifolds.  To remedy this failure,  Costenoble--Waner \cite{CW_book} introduced an extension they called $\RO(\Pi B)$-graded equivariant cohomology, building on \cite{CW_Duality},\cite{CW_Thom}, and \cite{CMW}.  This cohomology theory is graded on representations of the equivariant fundamental groupoid $\Pi B$ of a $G$-space $B$, and is built to witness characteristic classes.

Classically, the Thom isomorphism states: given a rank $n$ vector bundle $\xi \colon E \to B$, the cohomology of the base space $B$ with coefficients in $\F_2$ is isomorphic to a shift of the reduced cohomology of the Thom space of $E$.  The isomorphism is obtained by cupping with the Thom class in degree $n$.  To incorporate integral coefficients and still allow for unoriented bundles, one requires cohomology with local coefficients.  Equivariantly, there is a Thom isomorphism for ordinary $\RO(G)$-graded cohomology only when the fibers of the $G$-bundle are isomorphic to a constant representation $V$, also called a $V$-bundle. In this case, the Thom class lives in dimension $V \in \RO(G)$. However, there are many important equivariant bundles where the fibers are isomorphic to different representations; take for instance the tautological bundles on projective spaces for $G=C_2$, the cyclic group of order two. Even here we do not have an $\RO(G)$-graded Thom isomorphism. A major hurdle in attempting to write down such an isomorphism is that there is no clear choice of dimension for the Thom class.  See for example \cite{CW_Thom}, \cite{Hazel_fund}, and \cite{BhattZou} for related discussions. The dimension of the fundamental class poses a similar problem for Poincaré duality.

Extending to the $\RO(\Pi B)$-grading of Costenoble--Waner resolves this problem.  Each bundle gives rise to a dimension in $\RO(\Pi B)$.  Moreover, $\RO(\Pi B)$ allows for different representations over different components of the fixed-set.  In \cite{CW_book}, Costenoble and Waner showed that for $G$ a compact Lie group, their $\RO(\Pi B)$-graded cohomology theory exhibits Poincaré duality for any smooth $G$-manifold and a Thom isomorphism for any $G$-vector bundle. However, the theory described in \cite{CW_book} is very technical and few computations are known.  Fairly recently, there have been some computations for $G=C_2$, of the classifying space $B_{C_2}U(1)$ \cite{CostenobleB} and of finite complex projective spaces \cite{CHT_C2}.  There are, to our knowledge, no known computations using an explicit cellular cochain complex and almost no known computations for larger groups.

In this paper, our goal is to give an accessible and reasonably self-contained presentation of the cellular cohomology theory from \cite{CW_book} in order to facilitate computations.  To simplify the story greatly, we let $G$ be a finite group, and we include a number of examples for $G=C_2$.  Unlike the previous work in \cite{CMW}, \cite{CW_Duality}, and \cite{CW_Thom}, the cohomology theory described in \cite{CW_book} is defined for $G$-spaces over a base $B$. This version of an $\RO(\Pi B)$-graded cohomology theory is represented by equivariant parametrized spectra, thus we refer to it as \textit{equivariant parametrized cohomology}. We do not always use the same notation\footnote{
For example, throughout \cite{CW_book} the reader will find references to dimension functions $\delta$. These are only needed for non-finite compact Lie groups, to handle the fact that  orbits are not self-dual. Since we are only considering finite groups, the dimension functions $\delta$ are always zero, and so we leave $\delta$ out of our notation.} or terminology as \cite{CW_book}, since our aim is to explain the theory to someone familiar with $\RO(G)$-graded cohomology.  For a recent overview of $\RO(G)$-graded equivariant homotopy theory, see \cite{HillHandbook}.

In what follows, we describe the key pieces needed to define equivariant parametrized cohomology.  We define $\RO(\Pi B)$ and its representations, look at the CW-structures twisted by representations of $\RO(\Pi B)$, and explain the construction of the parametrized cellular cochain complex. 
Along the way, we compute pieces of the theory for several new examples.  In particular, we compute $\RO(\Pi B)$ for a $C_2$-twisted real projective space and demonstrate that, unlike $\RO(G)$ which is always free abelian, the extended grading may have torsion.

\begin{lem}[\emph{c.f.} \cref{lem:ROpiRP2}]
Let $\R^{3,1}$ be the direct sum of a $2$-dimensional trivial representation and the $1$-dimensional sign representation. 
Consider $B = \RPtwist$, the projective space of  $\R^{3,1}$ (often called ``$\R P^2$-twist''). 
    There is an isomorphism 
    \[RO\pars{\Pi \RPtwist}\cong \Z^3\times \Z/2.\]
    \end{lem}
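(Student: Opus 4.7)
The plan is to compute $RO\pars{\Pi B}$ for $B = \RPtwist$ by parametrizing representations of the equivariant fundamental groupoid stratum by stratum. I first identify the fixed locus: writing $\R^{3,1} = \R^{2} \oplus \R_\sigma$ where $\R_\sigma$ denotes the $1$-dimensional sign representation, a line in $\R^{3,1}$ is $C_{2}$-invariant exactly when it lies in $\R^{2}$ or in $\R_\sigma$. Hence $B^{C_{2}} = \mathbb{P}(\R^{2}) \sqcup \mathbb{P}(\R_\sigma) = \RP^{1} \sqcup \{\pt\}$, so $\Pi B$ has three orbit-type strata: the free locus, the fixed $\RP^{1}$, and the isolated fixed point $[0:0:1]$.

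Using the description of representations of $\Pi B$ developed earlier in the paper, an element of $RO(\Pi B)$ is specified by compatible data on each stratum: an underlying rank $n \in \Z$; at $\RP^{1}$, a virtual $C_{2}$-bundle $V_{1}^{+} \oplus V_{1}^{-}$ with ranks $(n_{1}^{+}, n_{1}^{-})$ summing to $n$ and first Stiefel--Whitney classes $(w^{+}, w^{-}) \in H^{1}(\RP^{1}; \Z/2)^{2}$; at $\pt$, a virtual $C_{2}$-representation of dimensions $(n_{2}^{+}, n_{2}^{-})$ summing to $n$; and a class $w \in H^{1}(B^{\mathrm{free}}/C_{2}; \Z/2)$ coming from the orbit bundle on the free quotient. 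One computes $B^{\mathrm{free}} = \RPtwist \setminus \pars{\RP^{1} \sqcup \{\pt\}} \simeq \R^{2} \setminus \{0\}$ with antipodal $C_{2}$-action, so $B^{\mathrm{free}}/C_{2} \simeq \RP^{1} \cong S^{1}$ and $w \in \Z/2$.

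The two rank relations reduce the five rank integers to three independent ones, contributing the free part $\Z^{3}$. For the torsion, I compare $w$ as seen from each fixed component via a tubular-neighborhood analysis: the normal bundle of $\RP^{1}$ is the M\"obius line bundle with sign action on the fiber, while the tangent representation at $\pt$ is $\R_\sigma^{\oplus 2}$. Computing the orbit bundle in each tube gives $w \equiv w^{+} + w^{-} + n_{1}^{-} \pmod{2}$ from the $\RP^{1}$ side and $w \equiv n_{2}^{-} \pmod{2}$ from the $\pt$ side. Equating yields the single relation $w^{+} + w^{-} \equiv n_{1}^{+} + n_{2}^{+} \pmod{2}$, so $(w^{+}, w^{-}, w)$ collapse to a single $\Z/2$ factor, and $RO(\Pi B) \cong \Z^{3} \times \Z/2$. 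The main obstacle is verifying these compatibility formulas; the cleanest route is to check them on the four generating equivariant line bundles (the trivial bundle $\underline{\R}$, the constant sign bundle $\underline{\R}_\sigma$, the tautological bundle $\gamma$ on $B$, and $\gamma \otimes \underline{\R}_\sigma$) and extend by additivity, with the torsion class generated by $[\gamma] + [\gamma \otimes \underline{\R}_\sigma] - [\underline{\R}] - [\underline{\R}_\sigma]$.
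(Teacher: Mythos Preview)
Your approach is genuinely different from the paper's, and while the final answer and even the key relation match, the argument as written has a real gap.

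The paper works directly with the explicit skeleton of $\Pi\RPtwist$ (objects $b_0,b_1,b$ and generators $p_0,p_1,g,t,g_0$ subject to the listed relations). A representation $\gamma$ then amounts to assigning $\gamma(b_i)=\R^{p,q_i}$, $\gamma(b)=C_2\times\R^p$, and values on the generating morphisms. The relations force $\gamma(t)$, $\gamma(g)$, and (after normalizing) $\gamma(p_i)$; the only remaining freedom is in $\gamma(g_0)\in\vV_{C_2}(\R^{p,q_0},\R^{p,q_0})\cong O(1)\times O(1)$, and the relation $g_0\circ p_0\simeq p_0\circ g$ pins down $\det\gamma(g_0)=(-1)^{q_0+q_1}$, leaving one free $\Z/2$ parameter. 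This is exactly your relation $w^{+}+w^{-}\equiv q_0+q_1\pmod 2$, so the substance agrees.

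The gap is in the framing. You invoke ``the description of representations of $\Pi B$ developed earlier in the paper'' as stratified data (ranks and Stiefel--Whitney classes on each isotropy stratum, plus an orbit-bundle class on the free quotient, subject to tube compatibilities), but the paper does not develop such a description; it defines $RO(\Pi B)$ as isomorphism classes of functors $\Pi B\to\vV_G$ over $\cO_G$. To make your argument a proof you would need to show that such functors are equivalent to your stratified data with exactly those compatibility conditions---in particular, that \emph{every} compatible tuple actually defines a functor on the groupoid (existence) and that distinct tuples give non-isomorphic functors (injectivity). Your proposed verification, checking the compatibility formulas on the four line bundles and extending additively, only shows the formulas hold on the image of $\dim\colon KO_{C_2}(B)\to RO(\Pi B)$; it does not establish that the stratified data parametrize all of $RO(\Pi B)$, nor that the relations you wrote down are the \emph{only} ones. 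If you want to keep the geometric flavor, the cleanest fix is to translate your stratified invariants into values of $\gamma$ on the generators of the skeleton (your $(w^+,w^-)$ become the two $O(1)$-components of $\gamma(g_0)$, your $w$ is determined by $\gamma(t)$ or $\gamma(g)$) and then derive the relation from $g_0\circ p_0\simeq p_0\circ g$ rather than from line-bundle examples.
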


Given $\gamma \in \RO(\Pi B)$, a representation of the equivariant fundamental groupoid, and a $\CWg$-structure on $B$, the equivariant parametrized cellular cohomology of $B$ takes coefficients in a parametrized Mackey functor.  We define parametrized Mackey functors and explicitly describe the parametrized cellular cochain complex $C^{\gamma+*}(B;\uM)$ and its coboundary $d$ that gives the cellular cohomology groups
\[ H^{\gamma+*}(B;\uM) = H^*(C^{\gamma+*}(B;\uM),d).\] 
These groups extend to an $\RO(\Pi B)$-graded cohomology theory, and when $\gamma$ comes from a $V$-bundle on $B$, then $H^{\gamma}(B;\uM)\cong H_G^{V}(B;\uM)$ agrees with ordinary $\RO(G)$-graded Bredon cohomology.  By the Yoneda lemma, the cochain complex in degree $\gamma + n$ is the direct sum over the centers of the $n$-cells of the Mackey functor applied to these centers.  The difficult part for computations is identifying the coboundary in the cochain complex.

The first obvious question is what this theory captures in the case of the trivial group. In \cite{CW_Duality}, Costenoble--Waner present a model for cellular $RO(\Pi B)$-graded cohomology that uses an equivariant universal cover. They remark in \cite[Rmk~4.7.3]{CW_Duality} that their theory generalizes cohomology with local coefficients for the trivial group and this is clear from the universal cover approach. However, the approach in \cite{CW_book} is different and the comparison with local coefficients is not made in the newer text. So using the explicit description of the cellular cochain complex, we carefully make this connection in \cref{sec:trivialgroupcohomology}.

\begin{thm}[\emph{c.f.} \cref{thm:localispofinal}]
Let $G=e$ be the trivial group and $\uN$ be a constant parmetrized Mackey functor.
 The equivariant parametrized cellular cohomology of a path-connected CW-complex $B$ in degree $\gamma$ is isomorphic to the cellular cohomology of $B$ with coefficients twisted by $\gamma$ in degree  $|\gamma|$
\[H^{\gamma}(B;\underline{N}) \cong H^{|\gamma|}(B;N_\gamma)\]
where  $|\gamma|$ is the underlying dimension of $\gamma$.
\end{thm}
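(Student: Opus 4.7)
The plan is to exhibit a chain-level isomorphism between the parametrized cellular cochain complex $C^{\gamma+*}(B;\uN)$ constructed in the preceding sections and the classical cellular cochain complex computing cohomology of $B$ with local coefficients in $N_\gamma$, and then pass to cohomology. Because both cohomology groups are defined as cohomology of these complexes, a degreewise isomorphism of cochain complexes commuting with differentials suffices.

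First I would unpack what the ingredients become when $G=e$. The equivariant fundamental groupoid $\Pi B$ collapses to the ordinary fundamental groupoid of $B$, so a representation $\gamma \in \RO(\Pi B)$ is a functor from this groupoid to virtual real vector spaces. Since $B$ is path-connected, $\gamma$ is determined up to isomorphism by its underlying dimension $|\gamma|$ together with a monodromy representation $\pi_1(B, b_0) \to GL_{|\gamma|}(\R)$. The local coefficient system $N_\gamma$ is the twist of the constant system $N$ by the associated orientation character of $\gamma$, so the data defining $\gamma$ and the data defining $N_\gamma$ are in natural correspondence.

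Next I would unpack $\CWg$-structures and the cochain groups. For the trivial group a $\CWg$-structure should amount to an ordinary CW-structure on $B$ in which each ``$n$-cell relative to $\gamma$'' is a genuine $(|\gamma|+n)$-cell equipped with a trivialization of $\gamma$ over its closure and a choice of center. By the Yoneda description recalled in the introduction, $C^{\gamma+n}(B;\uN)$ is the direct sum, over centers of $n$-cells, of $\uN$ evaluated at these centers; for a constant parametrized Mackey functor $\uN$ each summand is a fixed abelian group $N$, matching the classical local-coefficient cochain group $\bigoplus_\alpha N_\gamma(b_\alpha)$ in degree $|\gamma|+n$ once we fix identifications of each stalk with $N$ via the trivializations of $\gamma$.

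The main obstacle is matching the coboundary maps. The parametrized coboundary combines incidence numbers of attaching maps with the transport isomorphisms of $\gamma$ along paths joining cell centers to attaching points; the classical local-coefficient coboundary uses the same incidence numbers weighted by monodromy in $N_\gamma$. I would fix a system of basepaths compatible with the CW-structure, use the chosen trivializations of $\gamma$ to translate parametrized transport data into scalars via the orientation character, and check that the resulting signs agree with the monodromy action of $\pi_1(B,b_0)$ on $N_\gamma$. The comparison is essentially a cocycle computation, and the key subtlety is verifying that the conventions built into the parametrized definition, particularly how the coboundary is assembled from Mackey functor transfers and the attaching data of $\gamma$-twisted cells, reproduce the standard twisted-degree formula. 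Once this is done, the cochain complexes are isomorphic with matching differentials, and passing to cohomology yields the stated isomorphism.
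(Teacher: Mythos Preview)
Your strategy is correct and aligns with the paper's: identify the cochain groups via Yoneda as direct sums of copies of $N$ indexed by cell centers, then verify that the coboundary maps agree. The paper carries this out concretely, and there are a few points where your description diverges from what actually happens.

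First, two small corrections. Nonequivariantly, a $\CWg$-structure is \emph{exactly} an ordinary CW-structure on $B$; no trivialization of $\gamma$ over cells is part of the data (the paper records this in \cref{ex:CWstructureG=e}). Also, the monodromy of $\gamma$ is not a representation into $GL_{|\gamma|}(\R)$ but only a sign character $\pi_1(B,b)\to O(1)=\{\pm 1\}$, because the morphism sets in $\vV$ are $\pi_0 O$ rather than $O$ itself (see \cref{ex:noneqvV}). So $N_\gamma$ is really $N$ twisted by a sign.

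The substantive difference is in how the coboundary comparison is executed. Rather than ``fixing basepaths and translating transport data into scalars'', the paper proves an explicit formula (\cref{thm:deltanxy}): each component $\delta_n^{x,y}$ of the connecting homomorphism equals $\sum_{z\in f_{x,y}^{-1}(y)} \varepsilon_z\,\omega_{x,y}^z$ in $\Z[\Pi B(x,y)]$, where $\varepsilon_z$ is a local degree and $\omega_{x,y}^z$ is the radial path from the center $x$ to the preimage $z$. This is obtained by carefully analyzing the lax maps $h_n$, $k_n$ and the factorization of the connecting map through a pinch. With this formula in hand, the paper compares $\uC_*(X)(b)$ directly with the cellular chain complex $C_*(\widetilde{B})$ of the universal cover as right $\pi_1(B,b)$-modules, checking that the two boundary formulas match term by term. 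The twist by $\gamma$ then enters only through the automorphism $\Gamma_0^{-1}\Gamma_\gamma$ of $\Bsorb{e}{B}$, which multiplies each path $\omega$ by the sign $m_\gamma(\omega)$ (\cref{lem:actgamma}). Your outline would reach the same conclusion, but the universal-cover formulation gives a clean target for the comparison and avoids having to fix auxiliary basepath systems.
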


Returning to the equivariant case, we use the explicit description of the equivariant parametrized cellular cochains to compute some parametrized cohomology groups for the following $G$-spaces:
\begin{enumerate}[(1)]
\item the $C_2$-disk $D(\R^{1,1})$, where $\R^{1,1}$ is the $1$-dimensional sign representation,
\item the $C_2$-circle $S^{1,1}$ obtained as the one-point compactification of $\R^{1,1}$, 
\item the $C_2$-projective space $\RPtwist$ of 
the representation $\R^{3,1}$, 
\item the $C_4$-projective space of the 3-dimensional  representation which is a direct sum of a trivial representation and the representation $\R^2$ with rotation by 90 degrees.
\end{enumerate}
We do not carry out any complete $\RO(\Pi B)$-graded computations here, but we compute cohomology in some  degrees beyond the $RO(G)$-grading. Our goal is to demonstrate the theory and how to apply it.

\subsection{Literature}
In this paper we mainly follow Costenoble--Waner  \cite{CW_book}, though we also look to their previous work in \cite{CW_Duality, CW_Thom}, work of Costenoble--May--Waner \cite{CMW}, and the work on parametrized homotopy theory of May--Sigurdsson \cite{MaySig}. We also point the reader to more recent treatments of parametrized homotopy theory by Malkiewich \cite{malkiewich2023parametrizedlowtech, Malkiewich_Convenient}.

Other people have also thought about equivariant cohomology with local coefficients, for example, both Mukherjee--Mukherjee in \cite{Mukherjee} and Moerdijk--Svensson \cite{Moerdijk}. 
More recently, there is work of Mukherjee--Sen \cite{MukherjeeSen} and Basu--Sen \cite{BasuSen} extending \cite{Mukherjee}. 
Perhaps the most similar treatment to the approach of Costenoble--Waner can be found in Shulman's thesis \cite[\S 4]{Shulman}, which appears to be closely related to the approach in the original paper \cite{CW_Duality} in the context of $RO(G)$-graded cohomology.

The idea of recording cohomology with local coefficients in an extended grading of regular cohomology also appeared in \v{C}adek in \cite{Cadek}. 
In this work, without phrasing it as such, \v{C}adek computes the nonequivariant $\RO(\Pi \R P^\infty)$-graded cohomology of $\R P^\infty$. Noting that 
\[\RO(\Pi \R P^\infty)\cong \Z \times \Z/2,\] 
the parametrized cohomology is 
\[ H^{*,*}(\R P^\infty ;\Z) \cong \Z[a]/2a\]
for a class $a \in H^{1,1}(\R P^\infty ;\Z)$ where here $(*,*) \in  \Z \times \Z/2$. In fact, \v{C}adek does the computation for $BO(n)$ for all $n$, and
Lastovecki treats products of $BO(n)$'s in \cite{Lastovecki}.

We note that none of these articles cite any of Costenoble--Waner's work, and we did not find a comparison of the various approaches.  
Our goal is \emph{not} to give a comprehensive treatment comparing the different approaches in the literature, but rather to give an expository treatment of certain aspects of \cite{CW_book} in order to facilitate computations.

\subsection{Organization}
In \cref{sec:fun} we explain the construction of $\RO(\Pi B)$, and compute various examples. In \cref{sec:para} we review background on parametrized homotopy theory that gets used in the construction of cellular parametrized cohomology. \cref{sec:coh} then turns to cohomology. We first review the definition of $\CWg$-structures and the associated cellular chain complexes.  We then give an explicit description of the cellular cochain complex, and carefully explain the connection between this construction and cohomology with local coefficients in the case of the trivial group.
In \cref{sec:ex} we turn to the computation of equivariant parametrized cohomology groups for some examples with $G=C_2$ and $G=C_4$.

\subsection{Acknowledgements}
We would like to thank the following people for useful conversations: Prasit Bhattacharya, Anna Marie Bohmann, Thomas Brazelton, Steven Costenoble, Michael Hill, Eric Hogle, Inbar Klang, Cary Malkiewich, Peter May, Vesna Stojanoska, Stefan Waner, and Foling Zou. We thank the anonymous referee for their helpful comments and suggestions. 
We also thank the Women in Topology (WIT) network for making this project happen, and the Hausdorff Research Institute for Mathematics for their hospitality during the WIT IV conference.

This material is based upon work supported by the National Science Foundation under Grant No. DMS 2143811 and DMS 2135960.
Additionally, Clover May is supported by grant number TMS2020TMT02 from the Trond Mohn Foundation. Sabrina Pauli acknowledges support by Deutsche Forschungsgemeinschaft (DFG, German Research Foundation) through the Collaborative Research Centre TRR 326 \textit{Geometry and Arithmetic of Uniformized Structures}, project number 444845124.


\section{The Fundamental groupoid and its representations }\label{sec:fun}
Parametrized equivariant cohomology is graded on $RO(\Pi B)$. 
 The goal of this section is to explain the group $\RO(\Pi B)$ and compute it in a few examples. 
 
\subsection{The equivariant fundamental groupoid}
In this section, we review the definition of the equivariant fundamental groupoid $\Pi B$. This construction generalizes the classical fundamental groupoid in the sense that the definitions coincide when the group $G$ is trivial.

We first give the definition of the equivariant fundamental groupoid as a category fibered in groupoids over the orbit category $\cO_G$, as described in \cite{CW_book, CMW, CW_Duality}, and originally due to tom Dieck \cite{td}. 

\begin{definition}
    The \emph{orbit category} $\cO_G$ is the category with objects $G/H$ for subgroups $H\subset G$, and morphisms maps of $G$-sets.
\end{definition}

\begin{example}\label{ExOrbitCats}
        For $G=C_2=\langle \tau \rangle$, there are two objects $C_2/e$ and $C_2/C_2$ in $\cO_{C_2}$. The non-identity morphisms are depicted in the following diagram.,
\[\xymatrix{
 C_2/C_2  \\
 C_2/e  \ar[u]_-\rho \ar@(dr,dl)[]^{\tau}
}\]
where the map $\tau$ indicates multiplication by the element $\tau$. 
The morphisms satisfy the relation $\rho \circ \tau = \rho$.  We often denote the identity morphism at $C_2/e$ by $e$ rather than $\id$.
\end{example}

We now turn to the definition of the equivariant fundamental groupoid. For homotopies $\omega_1$ and $\omega_2$, we write $\omega_2 \star \omega_1$ for the concatenation of the homotopy $\omega_1$ followed by the homotopy $\omega_2$, as in composition of functions. We adopt the same convention in the special case when the homotopies are paths. 

The definition of a category fibered in groupoids is due to Grothendieck \cite[Expos\'e 6, pp.165-166]{Grothendieck_SGA1}. For an exposition in this context, see \cite[Definition 5.1]{CMW}.

\begin{definition}\label{def:unstraight fungroupoid}
Let $B$ be a $G$-space. 
    The \emph{equivariant fundamental groupoid of $B$} is the category $\Pi B$ fibered in groupoids over $\cO_G$
\[\phi\colon \Pi B\rightarrow \cO_G\] 
where
    \begin{enumerate}[(a)]
        \item objects of $\Pi B$ are $G$-maps $x\colon G/H\rightarrow B$,
        \item morphisms of $\Pi B$ from $x\colon G/H\rightarrow B$ to $y\colon G/K\rightarrow B$ are pairs $(\alpha, \omega)$ where $\alpha\colon G/H\rightarrow G/K$ is a $G$-map (i.e.\ a morphism in $\cO_G$) and 
        \[\omega\colon G/H\times I\rightarrow B\] 
        is a $G$-homotopy from $x$ to $y\circ \alpha$. Furthermore, if $\omega$ is homotopic to $\omega'$ relative to the endpoints $G/H \times\{0,1\} \to B$, then we identify the morphisms $(\alpha,\omega)=(\alpha, \omega')$. We will often depict the data of a morphism by 
        \[\xymatrix{ G/H \ar[r]^-\alpha \ar[d]_-x \ar@{=>}[dr]^-{\omega} & G/K \ar[d]^-y \\ 
        B \ar@{=}[r] & B
        \period } \]
        \item Composition is given by
        \[ (\alpha,\omega)\circ (\alpha',\omega') = (\alpha\alpha', \omega(\alpha' \times \id_I)  \star \omega' ).\] 
        \item The functor $\phi \colon \Pi B \to \cO_G$ sends $x\colon G/H\rightarrow B$ to $G/H$ and a morphism $(\alpha,\omega)$ to the $G$-map $\alpha$.
    \end{enumerate}
\end{definition}
Note that the fiber of $\phi$ over $(G/H,\id_{G/H})$ can be identified with the nonequivariant fundamental groupoid of the $H$-fixed points $\Pi B^H$.

\begin{rem}
In much of the literature, including \cite{CW_book} and \cite{CMW}, the equivariant fundamental groupoid of a $G$-space $B$ is denoted by $\Pi_GB$. We omit the $G$ in the notation.  Instead, we are careful to use the restriction functor $i^*_H$ when we want to restrict to a subgroup $H$.
\end{rem}

\begin{rem}
By Elmendorf's Theorem, up to weak homotopy equivalence, a $G$-space $B$ is the same as the data of a functor 
\[B \colon \cO_G^\op \to \Top.\] 
Let $\Pi \colon \Top \to \Gpds$ denote the functor from topological spaces to groupoids that takes a space to its fundamental groupoid. 
The Grothendieck construction applied to the composite
    \[\xymatrix{\cO_G^\op \ar[r]^-{B} & \Top  \ar[r]^-{\Pi}  & \Gpds} \]
is precisely the category fibered in groupoids $\Pi B$ of \cref{def:unstraight fungroupoid}.
The Grothendieck construction is an equivalence of categories, so this is an equivalent perspective. 
 We refer the reader to Pronk and Scull \cite{pronk2019equivariant} for a detailed treatment, as well a precise formulation of the Grothendieck construction in this context.
 \end{rem}

\begin{example}
For a nonequivariant space $B$, i.e.\ when $G=e$ is trivial, $\Pi B$ is just the fundamental groupoid $\Pi B$ of the space $B$. 
\end{example}

\begin{remark}
\label{remark: how to think of PiGB}
    A $G$-map $x\colon G/H\rightarrow B$ is completely determined by its value at the identity coset $eH$. Furthermore, for $x$ to be a $G$-map, the image of $eH$ under $x$ has to be contained in $B^H$. So an object in $\Pi B$ is determined by a point $x(eH) \in B^H$.  Thus we also refer to the map $x\colon G/H\rightarrow B$ as a ``point'' in $B$.

Let $x\colon G/H\rightarrow B$ and $y\colon G/K\rightarrow B$, and let $\alpha\colon G/H\rightarrow G/K$.  A homotopy $\omega\colon G/H\times I\rightarrow B$ from $x\colon G/H\rightarrow B$ to $y\circ\alpha\colon G/H\rightarrow B$ is determined by a path in $B^H$ from $x(eH)$ to $y(\alpha(eH))$. Thus the set of morphisms 
    \[ \Pi B(x,y) \cong \coprod_{\alpha \colon G/H \to G/K}\Pi B^H (x(eH), y(\alpha(eH)) ).\] 
\end{remark}

We will often need to move between $\Top^G$ and $\Top^H$ for $H\subset G$ a subgroup. To do this, we will use the adjunction
\[\xymatrix{ G\times_H - \colon \Top^H \ar@<1ex>@{->}[r] & \Top^G \ar@<1ex>@{->}[l] \colon i^*_H } \]
where $i^*_HB$ is the $H$-space underlying a $G$-space $B$ and $G\times_H  C$ is the quotient of $G\times C$ by $(gh, c) \simeq (g,hc)$ for an $H$-space $C$.

The adjunction gives rise to the following change-of-group functor for fundamental groupoids.
\begin{defn}[Change-of-Groups]\label{defn:cog-PiB}
Let $H\subset G$ be a subgroup. Given a $G$-space $B$, define
\[ G\times_H  -  \colon \Pi  i^*_HB \to \Pi B\]
to be the functor which sends a point $b \colon H/K \to i^*_HB$ to $G\times_H b \colon G/K \to B$ and a morphism $(\alpha,\omega) \colon b \to b'$ to $(G\times_H\alpha, G\times_H\omega)$. 
\end{defn}

\subsection{Examples of $\Pi B$}
To describe concrete examples of the equivariant fundamental groupoid, it will be practical to describe a skeleton, that is an equivalent subcategory where no two objects are isomorphic.
By \cref{remark: how to think of PiGB}, two objects $x\colon G/H\rightarrow B$ and $y\colon G/K\rightarrow B$ are isomorphic in $\Pi B$ if and only if there is an isomorphism $\alpha \colon G/H \to G/K$ and a path from $x(eH)$ to $y(\alpha(eH))$ in $B^H$.
By abuse of notation, we will denote a skeleton of $\Pi B$ by $\Pi B$ as well. 

\begin{rem}
    When working with $G=C_2$, we use the motivic notation $\R^{p,q}$ for $C_2$-representations.  We write $\R^{p,q}=\R^{(p-q) + q\sigma}$ for the direct sum of $p-q$ copies of the one-dimensional trivial representation and $q$ copies of the one-dimensional sign representation $\sigma$. The one-point compactification of $\R^{p,q}$ is a $p$-dimensional $C_2$-sphere denoted $S^{p,q}$.
\end{rem}

\begin{example}\label{ex:funS11} 
We start with $B$ the $C_2$-space $S^{1,1}=S^\sigma$. 
This is \cite[Example~3.4]{pronk2019equivariant}. We will use this example extensively in the rest of the paper, so we redo the computation here and choose a skeleton for $\Pi S^{1,1}$.  We write $C_2 = \langle \tau \rangle$ and describe the functor $\phi \colon \Pi S^{1,1} \rightarrow \cO_{C_2}$ (see \cref{ExOrbitCats} and \cref{def:unstraight fungroupoid}).

First we consider the objects of $\Pi S^{1,1}$.  The underlying space $i^*_eS^{1,1}=S^1$ is path-connected and thus, up to isomorphism, there is only one object of the form $C_2/e\to S^{1,1} $. For the skeleton of $\Pi S^{1,1}$, we fix the object $b\colon C_2/e \to S^{1,1}$ which sends $e$ to $1$ and $\tau$ to $-1$. 
On the other hand, the fixed set
    $(S^{1,1})^{C_2}=\{0,\infty\}$ has two path components.  For the skeleton, we choose the two non-isomorphic objects $b_0\colon C_2/C_2\to  S^{1,1}$ with $b_0(C_2/C_2)=0$ and $b_1\colon C_2/C_2\to S^{1,1}$ with $b_1(C_2/C_2)=\infty$.
See \cref{fig:S11}, where the $C_2$-action on the circle is reflection across the vertical axis.

\begin{figure}[h]
\begin{center}
\includegraphics[width=0.5\textwidth]{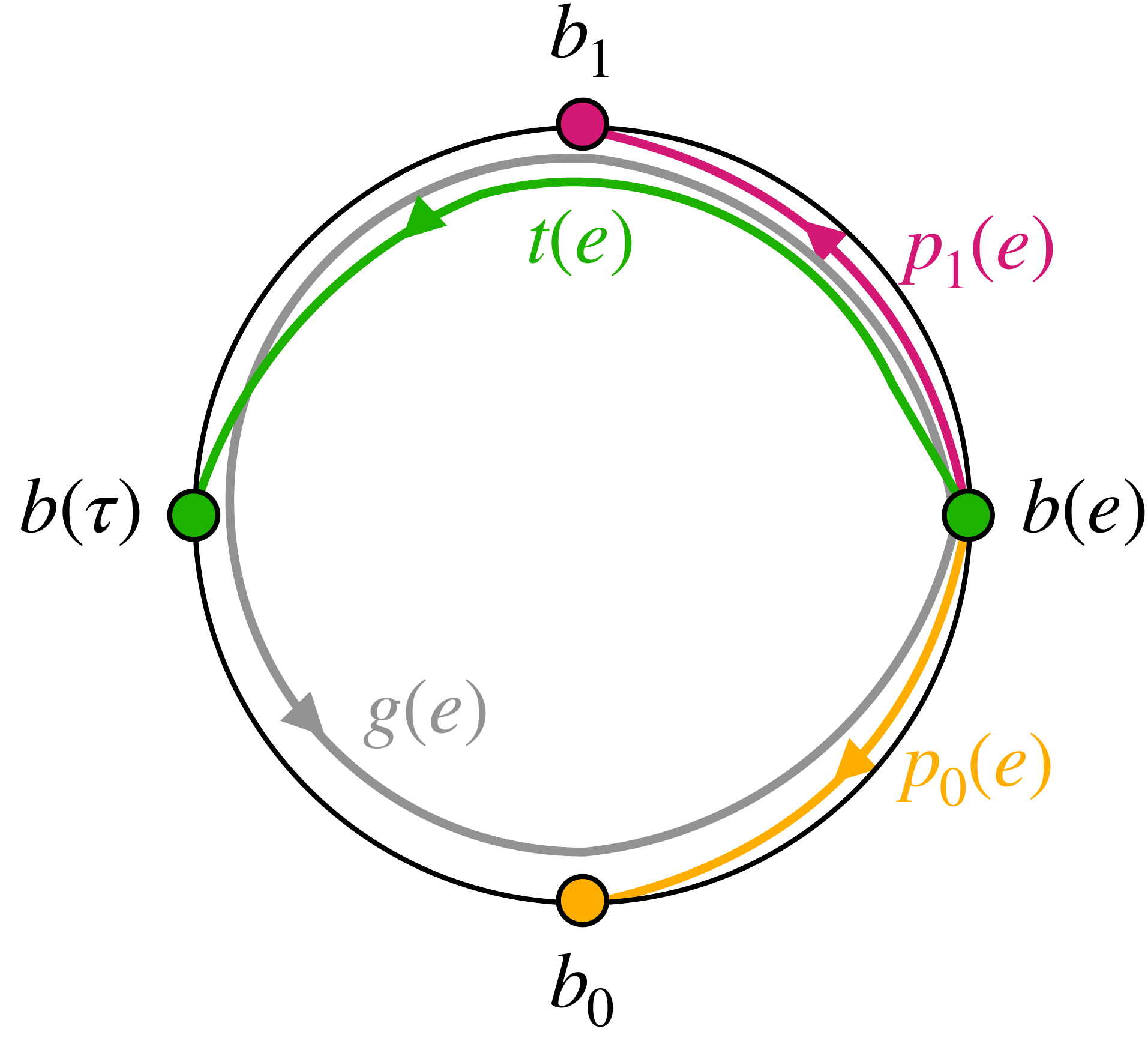}
\caption{Some objects and morphisms in $\Pi S^{1,1}$}
\label{fig:S11}
\end{center}
\end{figure}

    Next we find the morphisms in $\Pi S^{1,1}$ between the objects $b$, $b_0$ and $b_1$ using  \cref{remark: how to think of PiGB}.  We use the following diagram to keep track of this data.
\[\xymatrix@R=1.5pc{
&  \Pi S^{1,1} & & \ar[rrr]^-\phi  & & & & \cO_{C_2}\\
b_0 & & b_1   & & & & & C_2/C_2  \\
 & b \ar[ur]_-{\Z\{p_1\}} \ar[ul]^-{\Z\{p_0\}} \ar@(dr,dl)@{->}[]^-{\Z/2\{t\} \ltimes \Z\{g\} } &  &  & & &  & C_2/e  \ar[u]_-\rho \ar@(dr,dl)[]^{\tau}
}\]
The labels on the arrows denote the set of morphisms. Each set of morphisms is a right module over $\pi_1(S^1,b(e))\cong \Z$ and the labels keep track of the generators according to the choice of paths as labeled in \cref{fig:S11}. We have $\phi^{-1}(e) \cong \pi_1(S^{1},b(e))$ generated by $g$ and $\phi^{-1}(\tau) \cong \Pi S^{1}(b(e),b(\tau))$, the right $\pi_1(S^1,b(e))$-module generated by $t(e)$. We abuse notation and write 
\begin{align*}
p_0&=(\rho,p_0) &g&=(e,g) \\
p_1&=(\rho,p_1)  & t&=(\tau, t).
\end{align*}
Then we  have the following generating relations\footnote{To see these relations the reader may find it helpful to ``conduct'' the paths followed by the points in the orbits. That is, for each point $x: C_2/e \to B$, trace $x(e)$ with one finger and $x(\tau)$ with another.} among the morphisms.
\begin{enumerate}
    \item $p_0\circ g\simeq p_0 \circ t$
    \item $p_1 \circ t\simeq p_1$
    \item\label{3} $t \circ t\simeq \id$
     \item $t\circ  g= g^{-1} \circ t$
\end{enumerate}
For example, one can check relation \eqref{3} as follows, 
\[(\tau,t)\circ (\tau,t) = (\tau^2, t\tau \star t) = (\tau^2 , t^{-1}\star t) =(e,c_b) = \id \]
where $c_b$ is the constant path at $b$. 

Let us examine our claim that all of the endomorphisms of $b$ are automorphisms and the group structure is
\[\Pi S^{1,1} \cong \Z/2 \ltimes \Z.\]
Any endomorphism $(\alpha, \omega)$ of $b$ is invertible because $\alpha$ is an isomorphism of $C_2/e$ and the path $\omega$ can be reversed.  Moreover, the automorphisms $\Pi S^{1,1}(b,b)$ decompose as $\Z/2\{t\} \ltimes \Z\{g\}$ because $\Z\{g\}$ is a normal subgroup and conjugating gives $(\tau,t)(e,g)(\tau,t) = (e,g^{-1})$.
 
The endomorphisms of $b_0$ and $b_1$ are all trivial since these correspond to paths in $(S^{1,1})^{C_2}$, which is discrete.
\end{example}

\begin{example}
\label{ex:funRP2tw}
    Again we take $G=C_2= \langle \tau \rangle$, and we consider $B=\RPtwist$, the projective space of $\R^{3,1}$. 
    Note that 
    \[S^{1,1}\cong\mathbb{P}(\R^{2,1})\subset \RPtwist\] 
    is a subspace.  Thus we can use the skeleton of $\Pi S^{1,1}$ from \cref{ex:funS11} when describing the skeleton of $\Pi \RPtwist$.

\begin{figure}[h]
\begin{center}
\includegraphics[width=0.5\textwidth]{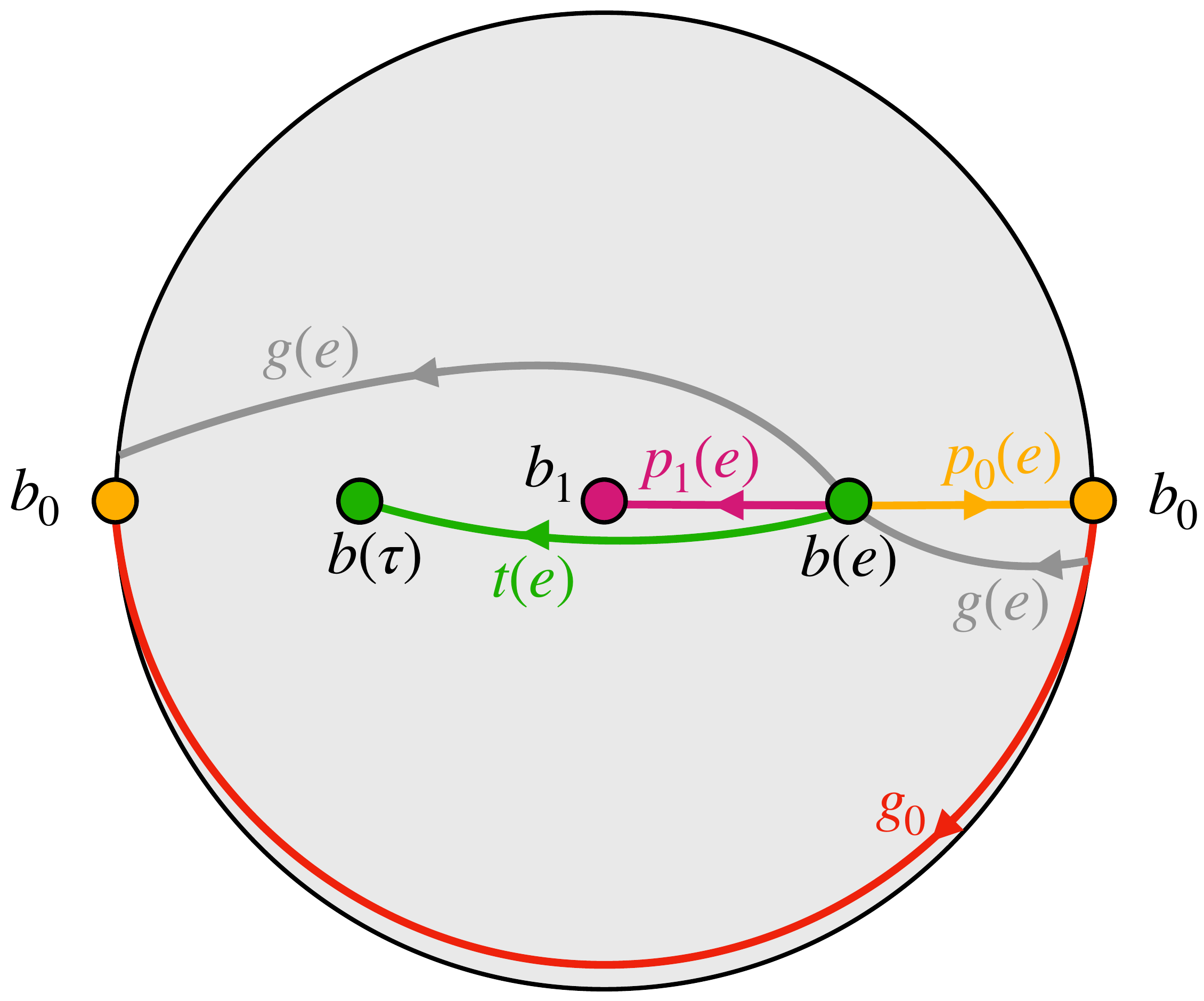}
\caption{Some morphisms in $\Pi \RPtwist$}
\label{fig:rp2tw}
\end{center}
\end{figure}
    
    The underlying space $i^*_e \RPtwist$ is $\R P^2$, which is path-connected and so has one object up to isomorphism. The fixed set consists of two connected components: one the isolated point $b_1\in S^{1,1}$ as in \cref{ex:funS11}, and one homeomorphic to $S^1$ containing $b_0$.  We depict these in \cref{fig:rp2tw}, with $b_1$ in the center and the fixed $S^1$ on the boundary of the disk.  The $C_2$-action in \cref{fig:rp2tw} is rotation about the center.

We can choose a skeleton of $\RPtwist$ with three objects all lying in the subspace $S^{1,1}$, so we choose the same ones as in the skeleton from \cref{ex:funS11}, namely $b\colon C_2/e\to \RPtwist $, $b_0\colon C_2/C_2\to \RPtwist$ and $b_1\colon C_2/C_2\to \RPtwist$.

The morphisms between these objects are depicted in \cref{fig:rp2tw} and give the following diagram over $\cO_{C_2}$. 
\[\xymatrix@R=1.5pc{
&  \Pi  \RPtwist & & \ar[rrr]^-\phi  & & & & \cO_{C_2}\\
\ar@(ur,ul)@{->}_-{\Z\{g_0\}}b_0 & & b_1 & & & & & C_2/C_2  \\
 & b \ar@{->}[ur]_-{\Z/2\{p_1\}} \ar@{->}[ul]^-{\Z/2\{p_0\}} \ar@(dr,dl)@{->}[]^-{\Z/2\{t\} \times \Z/2\{g\} }  & &  & & &  & C_2/e  \ar[u]_-\rho \ar@(dr,dl)[]^{\tau}
}\]
The reader will find a similar diagram and description in \cite[Example 3.1.2]{CW_book}. As in \cref{ex:funS11}, we abuse notation and write $p_0$ for $(\rho,p_0)$, $t$ for $(\tau, t)$, $g$ for $(e,g)$ and so on.  We have the following generating relations among the morphisms:
\begin{enumerate}
        \item $p_0\circ g\simeq p_0 \circ t$
    \item $p_1 \circ t\simeq p_1$
    \item $t \circ t\simeq \id$
    \item $g\circ g\simeq \id$
     \item $t\circ  g\simeq g^{-1} \circ t \simeq  g \circ t$
     \item $g_0\circ p_0\simeq p_0\circ g$.
\end{enumerate}
Here $g = g^{-1}$, so the automorphisms of $b$ decompose as a  product 
\[\Pi  \RPtwist(b,b) \cong \Z/2\{t\} \times \Z/2\{g\}.\]
\end{example}

\begin{example}
\label{example:equivariant fundamental groupoid for a fixed space}
    In the case that $B$ is a path-connected $G$-space with a trivial $G$-action, $\Pi B$ is simple to describe. Fix any point $b\in B$ and let $b \colon G/G \to B$ be the inclusion at $b$. Then $\Pi B$ has skeleton with one point for each orbit $G/H$, given by the composite
\[b_H \colon \xymatrix{G/H \ar[r]^-{\rho} &  G/G \ar[r]^-{b} &  B}\]
where $\rho$ is the quotient.
Furthermore, the morphisms are in bijection with 
\[\Pi B(b_H,b_K) \xrightarrow{\cong} \cO_G(G/H, G/K) \times \pi_1(i^*_eB,b), \]
and for 
\[b_e \colon \xymatrix{G/e \ar[r]^-{\rho} &  G/G \ar[r]^-{b} &  B}\] there is an isomorphism of groups
\[\Pi B(b_e,b_e) \cong G \times \pi_1(i^*_eB,b). \]
For example, for $G=C_2$ and $B$ path-connected with trivial $C_2$-action, we get a skeleton for $\Pi B$ of the form
\[\xymatrix@R=2pc{
\Pi B & \ar[rr]^-\phi   & & & \cO_{C_2}\\
b\ar@(ur,ul)@{->}_-{\pi_1(i^*_eB,b)}&  && & C_2/C_2  \\
 b_e \ar@{->}[u] \ar@(dr,dl)@{->}[]^-{C_2 \times \pi_1(i^*_eB,b)} & & && C_2/e  \ar[u]_-\rho \ar@(dr,dl)[]^{\tau}
}\]
for any point $b \in B$. 
\end{example}

\subsection{Representations of the fundamental groupoid}\label{sec:ROPiB}

We will now review the definition of the ring $\RO(\Pi B)$ of representations of the fundamental groupoid, which will form the extended grading for the Costenoble--Waner cohomology theories.  The grading is intended to incorporate information about vector bundles over $B$.  Indeed, every vector bundle over $B$ will give rise to an element of $\RO(\Pi B)$.  The group $\RO(\Pi B)$ generalizes $\RO(G)$ in the sense that when $B= G/G$, we will have  
\[\RO(\Pi G/G) \cong \RO(G) .\]
We will also give examples of computations of $\RO(\Pi B)$.  We will see that unlike $\RO(G)$, which is always a free abelian group, $\RO(\Pi B)$ may have torsion.

\begin{warn}\label{warn:ROGbad}
We recall Adams' warning \cite[\S6]{AdamsPrereq} about grading a cohomology theory on $RO(G)$. Similarly, it is a bit dishonest to claim that the cohomology theories are graded on $RO(\Pi B)$. In truth, the cohomology theories are graded on a category of which the group $RO(\Pi B)$ is a decategorification. This sleight of hand can lead to sign mistakes and other ambiguities if one is not careful.

Since this paper is just a user's guide,  we will not linger on this technical point. We refer the reader to \cite{CostenobleARXIV}, specifically Section 16 of the second version, for a detailed treatment of how to deal with this in this setting.
\end{warn}

We first define virtual vector bundles over orbits. Our treatment follows \cite{CMW}. 
Throughout, all vector bundles are equivariant real orthogonal vector bundles and all representations are real orthogonal representations. Bundle maps are assumed to induce isomorphisms on fibers.

\begin{defn}[{\cite[Def. 2.2 and p.321]{CMW}}]
 For each subgroup $H\subset G$ and for each isomorphism class of orthogonal representation, we choose a representative of the form $H \to O(n)$ and write $\Rep_H(n)$ for this set of representatives.
 \begin{enumerate}[(a)]
 \item
The category $\cV_G$ of vector bundles over $\cO_G$ is the category over $\cO_G$,
\[ \phi \colon \cV_G \to \cO_G\]
which is the disjoint union of categories
$\cV_G(n)$, indexed over non-negative integers $n$ defined as follows. The objects of $\cV_G$ are the bundles $G\times_HV \to G/H$ where $V$ is an element of $\Rep_H(n)$. A morphism is a pair $(\alpha,f)$ where $\alpha \colon G/H \to G/K$ is a map of $G$-sets and $f$  is a bundle map
\[ \xymatrix{ 
G\times_H V \ar[r]^-f \ar[d] & G \times_K W \ar[d]  \\
G/H \ar[r]^-\alpha & G/K},\]
 up to $G$-bundle homotopy.
The functor $\phi \colon \cV_G \to \cO_G$ sends $G\times_H V$ to $G/H$ and $(\alpha, f)$ to $\alpha$. 

\item The category $\vV_G$ of virtual vector bundles over $\cO_G$ is the category over $\cO_G$,
\[ \phi \colon \vV_G \to \cO_G\]
defined as the disjoint union of categories
$\vV_G(n)$
indexed on $n\in \Z$.  Each $ \vV_G(n) $ is defined as follows.
The objects of $ \vV_G(n) $ are pairs $(G\times_HV_1, G\times_HV_2)$ of objects in $\cV_G(n)$.
A morphism 
\[(\alpha, f_1, f_2 ) \colon G\times_H V \to G\times_K W \]
 consists of a $G$-map $\alpha \colon G/H \to G/K$ and an equivalence class of pairs 
of bundle maps
\[ \xymatrix{  G\times_H (V_i \oplus Z) \ar[d]  \ar[r]^-{f_i}   &  G\times_K (W_i\oplus Z') \ar[d] \\
G/H \ar[r]^-\alpha & G/K 
} \]
for $Z$ an orthogonal representation of $H$ and $Z'$ of $K$ with $|V_i|+|Z|=|W_i|+|Z'|$.  The equivalence relation is generated by $G$-bundle homotopy, and the following suspension relation: Given $T$ an orthogonal representation of $H$ and $T'$ of $K$ and a bundle map  $k \colon G\times_HT \to G\times_K T'$, then $(\alpha,f_1,f_2)$ is equivalent to $(\alpha, f_1\oplus k, f_2\oplus k)$.
The functor $\phi \colon \vV_G \to \cO_G$ maps $G\times_H V$ to $G/H$ and $(\alpha, f_1,f_2)$ to $\alpha$.
\end{enumerate}
There is a functor
\[ v \colon \cV_G \to \vV_G\]
which maps $G\times_H V$ to $(G\times_H V,0)$ and $(\alpha,f)$ to $(\alpha,f,0)$. A virtual bundle is called an \emph{actual bundle} if it is in the image of this functor, and similarly, an actual morphism is one in the image as well. 
\end{defn}

The following remark  adds alternative descriptions of $\vV_G$, each of which are equivalent to \cref{def: vVG}.

\begin{rem}\label{rem:vbarVG}
\begin{enumerate}[(1)]
\item The categories $\cV_G(n)$, $\cV_G$, 
 $\vV_G(n)$ and $\vV_G$ are skeleta of larger categories $\overline{\mathcal{V}}_G(n)$, $\overline{\mathcal{V}}_G$, 
 $v\overline{\mathcal{V}}_G(n)$,  and $v\overline{\mathcal{V}}_G$ built in a analogous way, but allowing all orthogonal $n$-bundles over orbits, not just those of the form $G\times_H V$ for $V\in \Rep_H(n)$. See \cite[Def. 2.1, 2.2 and IV.19]{CMW}, and also \cite[\S2]{CW_Duality}. 
 Using these larger categories can afford flexibility for various constructions. Choosing  fixed isomorphisms from each object of $\cV_G(n)$ to objects in $\overline{\mathcal{V}}_G(n)$ gives a retract $\overline{\mathcal{V}}_G(n) \to \cV_G(n)$, and similarly for the other categories listed above.
 \item   Let $U$ be a complete $G$-universe and let $BO_G(U)$ be the classifying space for orthogonal $G$-vector bundles. The category $\vV_G \cong \Pi(BO_{G} (U) )$ and $KO_G (X) = [X, BO_G (U)]_G$, see \cite[p.324]{CMW}. 
 \item Just as when we defined $\Pi B$, we could have instead obtained $\vV_G$ through the Grothendieck construction (see \cite[Definition 2.2]{pronk2019equivariant}) from an appropriately defined pseudofunctor
    \[ \vV_G \colon \cO_G^\op \to \Gpds.\]
Taking this perspective on $\Pi B ,\vV_G \colon \cO_G^\op \to \Gpds$, then $RO(\Pi B)$ corresponds to natural transformations modulo natural isomorphisms.
 \end{enumerate}
\end{rem}

We give some examples of the category $\vV_G$.
\begin{example}\label{ex:noneqvV}
 Let $G=e$ and $\vV=\vV_G$. For $p\in \Z$, we let $\R^p$ denote the virtual representation corresponding to the pair $(\R^{p+n},\R^{n})$ for $p+n>0$ in the Grothendieck completion. A morphism can be represented by a pair $(f,\id)$ for $f\in \pi_0O(p+n)$. Therefore, 
 \[
 \vV(\R^p,\R^p)\cong O(1) = \{\pm 1\}
 \]
 with the identification induced by the determinant.  That is, a morphism is determined by whether the map $f$ is orientation preserving or reversing. 
 \end{example}
 \begin{example}\label{example:VC2}
  Consider the case $G=C_2$. A $C_2$-vector bundle over $C_2/e$ is isomorphic to $C_2\times \R^p$ since any orthogonal representation of the trivial group is isomorphic to $\R^p$ for some natural number $p$. A $C_2$-vector bundle over $C_2/C_2$ is isomorphic to a representation $\R^{p,q}$. 
Isomorphism classes of virtual representations are thus labeled by pairs $(p,q) \in \Z$. We get three types of non-empty morphism sets:
    \begin{enumerate}[(i)]
        \item An element of $\vV_{C_2}(C_2\times \R^p, C_2\times \R^p ) $ is determined by a pair: a map $\alpha \colon C_2/e \to C_2/e$ together with a homotopy class of orthogonal maps  
        $e \times \R^p$ to $\alpha(e) \times \R^p$, i.e.\ an element of $O(1)$. So,
        \[\vV_{C_2}(C_2\times \R^p, C_2\times \R^p )  \cong C_2\times O(1).\]
 \item We have 
 \[\vV_{C_2}(C_2\times \R^p, \R^{p,q} )  \cong \vV(\R^p, i^*_e\R^{p,q} )  
 \cong O(1),\]
 and
 \item for $\vV_{C_2}(\R^{p,q},\R^{p,q})$, we represent $\R^{p,q}$ as a pair $(\R^{p+n,q+m}, \R^{n,m})$ with $p+n >q+m>0$. Any morphism can be represented by $(f,\id)$ for \[f \in \pi_0O(q+m) \times \pi_0O(p+n-(q+m)) \cong O(1)\times O(1).\]
 Therefore, 
 \[\vV_{C_2}(\R^{p,q},\R^{p,q}) \cong O(1)\times O(1).\]
    \end{enumerate}
\end{example}

We now turn to $\RO(\Pi B)$.
\begin{definition}\label{def: vVG}
An \emph{actual representation $\gamma$ of $\Pi B$} of dimension $n$ is a functor
\[\xymatrix{ \Pi B \ar[dr]\ar[rr]^{\gamma} &&  \cV_G(n) \ar[dl]\\
&\cO_G &
}\]
commuting with the functors to $\cO_G$. A \emph{virtual representation $\gamma$ of $\Pi B$} of dimension $n$ is a functor
\[\xymatrix{ \Pi B \ar[dr]\ar[rr]^{\gamma} &&  \vV_G(n) \ar[dl]\\
&\cO_G &
}\]
commuting with the functors to $\cO_G$. 
We let $RO(\Pi B)$ denote the natural isomorphism classes of virtual representations. 

Further, $RO(\Pi B)$ forms a ring under direct sums and tensor products in $\vV_G$, called the \emph{ring of representations of $\Pi B$.}
 \end{definition}

  \begin{remark}
  \begin{enumerate}[(1)]
 \item Instead of using virtual bundles to define $RO(\Pi B)$, we could have considered the group completion of isomorphism classes of actual representations.
As discussed in \cite{CW_Duality}, both approaches are equivalent if $\Pi B$ has finitely many isomorphism classes of objects.  In this paper, we will only consider such spaces $B$.
\item
One can generalize the definition of a representation by changing the target to get a functor $\gamma \colon \Pi B \to v\overline{\mathcal{V}}_G(n)$ as in \cref{rem:vbarVG}. This gives more flexibility when discussing representations. Each such functor is naturally isomorphic to one with target the full subcategory $\vV_G(n)$, so this  generalization does not change $\RO(\Pi B)$.
\end{enumerate}
\end{remark}

\begin{example}[Constant $V$ representation]\label{ex:constant}
Let $V$ be an $n$-dimensional orthogonal representation of $G$. There is a functor $\overline{\gamma}_V\colon \Pi B \to \overline{\mathcal{V}}_G(n)$ which sends $x \colon G/H \to B$ to the bundle $G/H \times V \to G/H$ and sends a morphism $(\alpha, \omega) \colon x \to y$, where $y \colon  G/K \to B$, to $\alpha \times \id_V  \colon G/H \times V \to G/K \times V$. The functor $\overline{\gamma}_V$ is naturally isomorphic to one of the form $\gamma_V\colon \Pi B \to \cV_G(n)$. We can see this by composing with a retract $\overline{\mathcal{V}}_G(n) \to \cV_G(n)$, a process which can be described explicitly as follows. For each $H\subset G$, we implicitly identify $i^*_HV$ with the corresponding representative in $\Rep_H(n)$. Applying the shearing isomorphism 
\[ G/H \times V \xrightarrow[\text{shear}]{\cong} G\times_H i^*_H V  \]
given by $(gH,v) \mapsto [g, g^{-1}v]$ allows us to identify $\overline{\gamma}_V(x)$ with an object in $\mathcal{V}_G(n)$ for each $x\in B$. Doing this for each $x$ and conjugating the morphisms with these identifications gives a representation $\gamma_V \colon \Pi B \to \cV_G(n)$. We call $\gamma_V$ the \emph{constant $V$ representation} and abuse notation, denoting it simply by $\gamma_V=V$.
 Using the universal property of Grothendieck completion, this construction gives a homomorphism
\[RO(G) \to RO(\Pi B).\]
\end{example}

Any orthogonal $G$-vector bundle over $B$ gives rise to an element of $RO(\Pi B)$ via pullback.   
We explain this construction here. See for example \cite[Proposition~2.1.3]{CW_book}, \cite{CW_Duality}, or \cite[\S 1.3]{CostenobleB}.

\begin{defn}\label{def:dimension}
    Let $\xi \colon E \to B$ be an $n$-dimensional orthogonal $G$-vector bundle over the $G$-space $B$. Then $\xi$ determines an element of $\RO(\Pi B)$, which we denote by
    \[\dim(\xi) \in \RO(\Pi B), \]
    or simply $\xi \in \RO(\Pi B)$ by abuse of notation, as follows. For each orbit $x\colon G/H \to B$ in $ \Pi  B$, the pullback gives a vector bundle
    \[x^*\xi \to G/H.\] 
        Given a path $(\alpha,\omega) \colon x \to y$, we use homotopy lifting for vector bundles to get a well-defined homotopy class of bundle maps
        \[x^*\xi \to y^*\xi.\]
        This describes a functor $\overline{\dim} ( \xi)   \colon \Pi B \to \overline{\mathcal{V}}_G(n)$. 
        Composing with a retract $\overline{\mathcal{V}}_G(n) \to \cV_G(n)$ gives an actual representation $\dim (\xi) \colon \Pi B \to \cV_G(n)$.
    \end{defn}
    
    \begin{rem}\label{rem:dimchoices}
        Regarding the choice of retract, we do this as follows in our computations. 
    For each $x\in B$,  the fiber of $\xi^*x$ over the identity coset $eH$ is an orthogonal $H$-representation, and so we can identify it with some $\xi_0(x)\in \Rep_H(n)$. This involves a choice.\footnote{In most examples we consider, making this choice will be equivalent to picking an orientation in each fiber of $\xi$. Note that we do not need to, and often cannot, make this choice continuously.}
   Once we have made this choice, we get an isomorphism
    \[\xymatrix{ \dim( \xi) (x)= G\times_H\xi_0(x) \ar[r]^-{\cong} & x^*\xi= \overline{\dim}(\xi) (x) }
    . \] 
    Given a morphism $(\alpha,\omega) \colon x \to y$, 
  we have a commutative diagram
    \[\xymatrix{ \dim( \xi) (x) = G\times_H\xi_0(x) \ar@{-->}[d] \ar[r]^-{\cong} & x^*\xi =\overline{\dim}(\xi)(x) \ar[d]^-{\overline{\dim}(\xi) (\alpha,\omega)} \\
    \dim (\xi) (y)=G\times_K\xi_0(y) \ar[r]^-{\cong} & y^*\xi = \overline{\dim}(\xi) (y) 
    }
     \] 
and the dashed arrow is obtained as follows. The map from the fiber over $eH$ in $x^*\xi$ to the fiber over $\alpha(eK)$ in $y^*\xi$ is induced by lifting the path $\omega(eH) \colon x(eH) \to y(\alpha(eK))$ in $B$ to the total space of $\xi$. This forces the map on $\xi_0(x)$. Equivariance determines the rest.
    \end{rem}

\begin{example}\label{ex:dimVconstant}
    For a $G$-representation $V$, the trivial bundle $\xi_V = B \times V$ has $\dim(\xi_V) = V$, the constant $V$ representation of \cref{ex:constant}.
\end{example}

\begin{defn}\label{defn:KO}
   By the universal property of group completion, the construction of \cref{def:dimension} induces a map from the isomorphism classes of virtual orthogonal $G$-vector bundles to representations of the equivariant fundamental groupoid
    \[\dim \colon KO_G(B) \to \RO(\Pi B) .\]
    Let $ KO(\Pi  B)$ be the subgroup of $\RO(\Pi B)$ corresponding to the image of the morphism $\dim$. In \cite[p.28]{CW_Thom}, the image of $\dim$ is denoted $DO_G(B)$.
\end{defn}

Base change and change-of-groups will lift to representations of the fundamental groupoid.

\begin{defn}\label{defn:cog-vV}\label{defn:cog-ROPiB}
Define the following functors and homomorphisms:
\begin{enumerate}[(a)]
\item
Let 
\[i_H^* \colon \vV_G \to \vV_H\]
be the functor which takes a $G$-bundle over $G/K$ to its pullback along the inclusion $H/H\cap K \to G/K$, and morphisms to the restriction applied to the pull-backs.

\item Let 
\[i^*_H \colon \RO(\Pi B) \to \RO(\Pi i^*_HB) \]
be the functor which sends $\gamma$ to $i^*_H\circ \gamma \circ (G\times_H - )$.

\item For a map $f\colon A \to B$ of $G$-spaces, let
\[f^* \colon \RO(\Pi B) \to \RO(\Pi A)\]
be the map which sends $\gamma$ to $\gamma \circ \Pi f$.
\end{enumerate}
\end{defn}
\begin{rem}
For any $b \in \Pi i^*_HB$, we have 
\[(i^*_H\gamma)_0(b) = \gamma_0(G\times_H b),\]
where $\gamma_0$ indicates the fiber over the identity coset as in \cref{def:dimension}.
\end{rem}

\begin{rem}\label{rem:ROGinKO}
If $\xi$ is a $G$-bundle over $B$ and $\dim(\xi) \in \RO(\Pi B)$ is the corresponding representation, then
\[ i^*_H\dim(\xi) = \dim(i^*_H\xi) \]
where $i^*_H\xi$ is the $H$-bundle underlying $\xi$ over $i^*_HB$.

For $\rho \colon B \to G/G$, the image of 
\begin{align}\label{eq:rhostar}
\rho^* \colon \RO(G)=\RO(\Pi G/G) \to \RO(\Pi B)\end{align} 
is the copy of $RO(G)$ described in \cref{ex:constant}.
This subgroup is always contained in $KO(\Pi B)$. Indeed, for $V$ an orthogonal representation of $G$, $\rho^*V = \dim( \xi_V)$ where $\xi_V =B\times V$ as in \cref{ex:dimVconstant}.
\end{rem}

\begin{example}\label{example: coefficients when G = e}
Nonequivariantly, let
$G=e$ and take $B$ to be path-connected. Let $\pi =\pi_1 (B,b)$ for some $b\in B$. Then $\Pi B$ has skeleton the groupoid with one object whose automorphisms are $\pi$. It follows that
\[RO(\Pi B) \cong \Z \times \Hom(\pi,O(1))\]
where $\gamma =(p, \gamma_b)\in \RO(\Pi B)$ has virtual dimension $p$  and $\gamma_b$ gives a homomorphism
\[ \pi = \Pi B(b,b) \to \vV(\R^p,\R^p) \cong O(1) .\]

 In this case, the map 
 \[\dim \colon KO_G( B) \to \RO(\Pi B) \]
is surjective and thus $KO(\Pi B) = \RO(\Pi B)$. To see this, note that for $\rho \colon B \to \pt$,
\[\rho^*(RO(\Pi \pt)) = \Z\times \{\id\}\]
and this subgroup is in the image of $\dim$ by \cref{rem:ROGinKO}. So to get the rest, it is enough to show that  $\gamma=(1,\gamma_b)$ is in the image of $\dim$. Construct the bundle 
\[E\pi \times^{\gamma}_\pi \R \to B\pi \]
with $\pi$ acting on $\R$ via the homomorphism $\gamma_b$. Here, $B\pi$ is the classifying space of $\pi$, and $E\pi \to B\pi$ the universal principal $\pi$-bundle.
Pull-back 
along the first Postnikov stage $B \to B\pi$ defines a bundle $\xi(\gamma) $ over $B$ with
$\dim \xi(\gamma)  = \gamma$.
\end{example}

\begin{example}
Take $G=C_2$ and assume that $B$ is a path-connected, space with a trivial $C_2$-action as in Example \ref{example:equivariant fundamental groupoid for a fixed space}. Let $\pi =\pi_1 (i^*_eB,b)$ for some $b\in i^*_eB$.
A representation in $RO(\Pi B)$ is completely determined by its value over $C_2/C_2$.  So, from \cref{example:VC2}, we have
\[RO(\Pi B) \cong RO(C_2) \times \Hom(\pi, O(1)\times O(1)).\]
Using a similar trick to \cref{example: coefficients when G = e} above, we can show that  $\dim$
is surjective. Indeed, $\rho^*(RO(C_2)) = RO(C_2) \times \{ \id \}$ is in the image of $\dim$, so to get the rest, it is enough to prove that $\gamma = (\R^{2,1},\gamma_b)$ is in the image. We use the diagonal embedding of $O(1)\times O(1)$ in $O(\R^{2,1})$, which gives an action of $\pi$ on $O(\R^{2,1})$ through $\gamma_b$.
Form the $C_2$-bundle
\[E\pi \times_\pi^{\gamma} \R^{2,1} \to B\pi\]
where $E\pi$ and $B\pi$ are trivial $C_2$-spaces. We get a $C_2$-bundle $\xi(\gamma)$ by pulling back as before along the first Postnikov stage $B \to B\pi$ and $\dim \xi(\gamma) = \gamma$.
\end{example}

\begin{rem}\label{rem:kappax}
For a based $G$-space $B$,
the inclusion of the base point $\iota \colon G/G \to B$ 
induces a homomorphism
\[\iota^* \colon RO(\Pi B) \to RO(G)\]
which splits the map $\rho^* \colon RO(G) \to RO(\Pi B)$ of \eqref{eq:rhostar}.
Letting $\ker(\iota^*)=\widetilde{RO}(\Pi B )$,
it follows that
\[RO(\Pi  B) \cong \widetilde{RO}(\Pi B ) \times RO(G). \] 
\end{rem}

\subsection{Some computations of $\RO(\Pi B)$}
We now compute some examples of $\RO(\Pi B)$. We let $G=C_2 = \langle \tau \rangle$ and compute $\RO(\Pi S^{1,1})$ and $\RO(\Pi \RPtwist)$.  We will use the notation introduced in \cref{ex:funS11}, \cref{ex:funRP2tw}, and \cref{example:VC2}.

   \begin{lem}\label{lem:ROpiS11}
   There is an isomorphism 
   \[\RO\pars{\Pi S^{1,1}} \cong \Z^3 \]
   which assigns to a triple $(p,q_0,q_1)$ the representation  $\gamma$, which on objects is given by \begin{align*}
   \gamma(b_0)&=\R^{p,q_0} &
   \gamma(b_1)&=\R^{p,q_1} & \gamma(b)& =C_2\times \R^{p},
   \end{align*} 
   and on morphisms by 
   \begin{align*}
   \gamma(g) &= e\times (-1)^{q_0+q_1} & 
   \gamma(t) &= \tau \times(-1)^{q_1} &
   \gamma(p_0)&=\gamma(p_1)  =1
   \end{align*}
   \end{lem}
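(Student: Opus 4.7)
The plan is to exploit the explicit skeleton of $\Pi S^{1,1}$ presented in \cref{ex:funS11}, which has three objects $b, b_0, b_1$ and generating morphisms $g, t, p_0, p_1$ subject to the four relations listed there. A virtual representation $\gamma$ is a functor to $\vV_{C_2}$ over $\cO_{C_2}$, so on objects it must send $b \mapsto C_2 \times \R^p$, $b_0 \mapsto \R^{p,q_0}$, and $b_1 \mapsto \R^{p,q_1}$ for some triple $(p, q_0, q_1)$ of integers. On morphisms, \cref{example:VC2} identifies each generator with a sign in $O(1) = \{\pm 1\}$, yielding data $\epsilon_g, \epsilon_t, \epsilon_{p_0}, \epsilon_{p_1}$; here $\gamma(t)$ also records that it covers $\tau$ and $\gamma(p_i)$ that it covers $\rho$.

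The first step is to reduce the redundancy in the $p_i$-components by a choice of natural isomorphism. A natural isomorphism $\eta \colon \gamma \to \gamma'$ between representations sharing the same object data consists of an automorphism $\eta_b$ of $C_2 \times \R^p$ covering $\id$, together with automorphisms $\eta_{b_0}, \eta_{b_1}$ of $\R^{p,q_0}$ and $\R^{p,q_1}$. The naturality square for $p_i$ reads $\gamma'(p_i) = \eta_{b_i} \circ \gamma(p_i) \circ \eta_b^{-1}$, and since $\eta_{b_i}$ may be chosen freely in $O(1)\times O(1)$ by \cref{example:VC2}, I can absorb any sign in $\epsilon_{p_0}$ and $\epsilon_{p_1}$, reducing to the case $\gamma(p_0) = \gamma(p_1) = +1$.

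With that reduction in hand, the next step is to enforce the relations. Evaluating $\gamma(p_0 \circ g)$ and $\gamma(p_0 \circ t)$ on the fiber over the identity coset $e \in C_2/e$, and using that $\tau$ acts on $\R^{p,q_0}$ with determinant $(-1)^{q_0}$, the relation $p_0 g = p_0 t$ translates to $\epsilon_g = (-1)^{q_0}\epsilon_t$. Similarly, the relation $p_1 t = p_1$ yields $\epsilon_t = (-1)^{q_1}$, and hence $\epsilon_g = (-1)^{q_0+q_1}$, matching the formulas in the lemma. The relations $t^2 = \id$ and $t g = g^{-1} t$ impose no further constraints, since $O(1)$ is abelian.

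Finally, a direct check confirms that every triple $(p, q_0, q_1) \in \Z^3$ does give a well-defined representation via these formulas, so the assignment is surjective; injectivity is immediate because $p$, $q_0$, and $q_1$ are invariants of the virtual $C_2$-representations $\gamma(b), \gamma(b_0), \gamma(b_1)$, and the group structure is compatible with direct sum componentwise. The main obstacle in executing this plan is the bookkeeping of signs in the third step: correctly tracking the determinant of the $\tau$-action on $\R^{p,q_i}$ when composing with a morphism that switches the two points of $C_2/e$ is precisely what forces the signs $(-1)^{q_1}$ and $(-1)^{q_0+q_1}$ to appear.
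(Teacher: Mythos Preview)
Your proof is correct and follows essentially the same approach as the paper's: both arguments identify the object values as $C_2\times\R^p$, $\R^{p,q_0}$, $\R^{p,q_1}$, use the relation $p_1 t \simeq p_1$ to force $\epsilon_t = (-1)^{q_1}$ and the relation $p_0 g \simeq p_0 t$ to force $\epsilon_g = (-1)^{q_0+q_1}$, and observe that the signs of $\gamma(p_0), \gamma(p_1)$ can be normalized to $+1$ via natural isomorphism. The only cosmetic difference is that you perform the normalization of $\gamma(p_i)$ first, whereas the paper carries $\det(\gamma_e(p_i))$ through the computation and normalizes at the end.
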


     \[\xymatrix@C=1.5pc{
&  \Pi  S^{1,1}& & \ar[rr]^-\gamma  & & & & \vV_{C_2}\\
b_0 & & b_1  & & & & \R^{p,q_0} & & \R^{p,q_1}   \\
 & b \ar@{->}[ur]_-{p_1} \ar@{->}[ul]^-{p_0} \ar@(dr,d)@{>}[]^-{g}\ar@(d,dl)@{>}[]^-{t } &  & & & & & C_2\times \R^p \ar@{->}[ur]_-{1} \ar@{->}[ul]^-{1} \ar@(dr,d)@{>}[]^-{\ \ \ \ \
 (-1)^{q_0+q_1}}\ar@(d,dl)@{>}[]^-{ (-1)^{q_1} \ \ }
}\]
\begin{proof} 
 Up to isomorphism, we have $\gamma(b) =C_2\times \R^p $ and $\gamma(b_i) =\R^{n_i,q_i} $ for $i=0,1$.
Furthermore, we have bundle maps 
\[\gamma(p_i) \colon C_2\times \R^p \to\R^{n_i,q_i} \]
which are isomorphisms on fibers 
and so $p=n_0=n_1$.  It remains to determine the morphisms.

From \cref{example:VC2}, $\gamma(p_i) \in O(1) = \{\pm 1\}$, so we just need to determine the signs.  That is, up to homotopy, we only need to know whether the maps are orientation preserving or reversing on fibers. Also from  \cref{example:VC2}, we have $\gamma(t), \gamma(g) \in C_2 \times O(1)$.  Recall that $t = (\tau, t)$ and $g = (e,g)$ were shorthand, so we only need to determine the signs for these as well. 

Choose the canonical basis for $\R^p$ (considering $\R^p$ as the pair $(\R^{p+n}, \R^n)$ with $p+n >0$ if necessary). Let $v \in \R^p$. Suppose $\gamma(p_1)(e,v) = Av$ for some orthogonal matrix $A$.  Since $\gamma(p_1)$ is equivariant, $\gamma(p_1)(\tau, v) = \tau \cdot Av$, where the action of $\tau$ on the right-hand side is determined by $q_1$.  If $A$ is orientation preserving but $q_1$ is odd, $\gamma(p_1)(\tau,-)$ will be orientation reversing.

We write $\gamma_a(p_1) \colon a \times \R^p \to i^*_e\R^{p,q_1}$ with $a \in \{e,\tau\}$ for the orthogonal linear map on fibers induced by $\gamma(p_1)$.  From the argument above, we must have
\[\det(\gamma_\tau(p_1))= \det(\tau) \det(\gamma_e(p_1))=(-1)^{q_1} \det(\gamma_e(p_1)),\] 
where $\tau$ denotes the action on $\R^{p,q_1}$.  The element $\gamma(p_i) \in O(1)$ for the representation $\gamma$ is determined by $\det(\gamma_e(p_1))$.

Similarly, we write $\gamma_a(p_0)$, $\gamma_a(g)$, and $\gamma_a(t)$ for the orthogonal linear maps on the fibers.  Note that $\gamma_a(t) = a\times \R^p \to \tau a \times \R^p$ since $t=(\tau, t)$.  We need to find $\det(\gamma_e(-))$ in each case.
 
The relation $p_1 \simeq p_1 \circ t $ from \cref{ex:funS11} implies $\gamma(p_1)=\gamma(p_1)\circ \gamma(t)$, and this means
\[ \gamma_e(p_1) =\gamma_\tau(p_1) \gamma_e(t). \]
Applying determinants, we have
\begin{align*}\det(\gamma_e(p_1)) &= \det(\gamma_\tau(p_1)) \det(\gamma_e(t))\\
&= (-1)^{q_1} \det(\gamma_e(p_1)) \det(\gamma_e(t)). \end{align*}
Therefore
\[\det(\gamma_e(t))  = (-1)^{q_1}\] and we can conclude
\[\gamma(t)= \tau \times (-1)^{q_1} \in C_2\times O(1).\]

A similar argument using the relation $p_0 \circ g \simeq p_0 \circ t$ gives 
\[ \gamma_e(p_0) \gamma_e(g) =\gamma_\tau(p_0) \gamma_e(t)  . \]
Applying determinants we see that
\[ \det(\gamma_e(g))  = (-1)^{q_0+q_1}, \]
as desired. 
It is then straightforward to check that $\det(\gamma_e(p_0))$ and $\det(\gamma_e(p_1))$ can be each chosen to be $1$, and that this choice does not matter up to isomorphism. 
\end{proof}

\begin{remark}\label{rem:matrixrem}
    A representation $\gamma$ is determined by the succinct diagram shown in \cref{lem:ROpiS11}, but the depiction perhaps obscures two subtle but important features of a representation: first that $\gamma$ is a functor, and second that choices of orientations are involved.

To address the first: less succinctly and by abuse of notation, we could treat $C_2 \times \R^p$ as $\R^p_e \oplus \R^p_\tau$ and use matrices.  For example, writing
    \[
    \gamma(p_1) = \begin{pmatrix} 1 & (-1)^{q_1}\end{pmatrix} \qquad \text{and} \qquad \gamma(t) = \begin{pmatrix} 0 & (-1)^{q_1}\\ (-1)^{q_1} & 0 \end{pmatrix}
    \]
    makes it clear that $ \gamma(p_1) = \gamma(p_1) \circ \gamma(t)$, which follows from $p_1 = p_1 \circ t$.

For the second: notice that, by definition $\gamma(b) = C_2 \times_e \R^p$.  When we write $\gamma(b) = C_2 \times \R^p$, we have implicitly identified an underlying orientation of both copies of $\R^p$. That is, we have identified $\R^p$ with the $C_2$-representation $\R^{p,0}$ and applied the inverse of the shearing isomorphism. 
With this identification, we have that $\det(\gamma_e(t)) = \det(\gamma_\tau(t))$.  In general, identifying $G \times_H V$ with $G/H \times V$ involves choices of coset representatives and choices of orientations on each copy of $V$. 
\end{remark}

\begin{example}
        In the previous example, $\RO(C_2)$ corresponds to the constant or ``homogeneous'' representations in $\RO(\Pi S^{1,1})$.  Using the isomorphism of \cref{lem:ROpiS11}, these are the dimensions $(p,q,q) \in \Z^3$.
\end{example}

\begin{example}\label{ex:tautS11}
Since $S^{1,1}$ is homeomorphic to $\mathbb{P}(\R^{2,1})$, the $C_2$-equivariant tautological line bundle $\taut$ of $\mathbb{P}(\R^{2,1})$ gives rise to an element of $\RO(\Pi S^{1,1})$. If we write $\R^{2,1} \cong \R^{1,0} \oplus \R^{1,1}$, we can identify $\mathbb{P}(\R^{2,1})\cong S^{1,1}$ so that the line along the $\R^{1,0}$-axis corresponds to $b_0$, and the line along the $\R^{1,1}$-axis corresponds to $b_1$.  Then
\[ \taut=(1,0,1) \in \RO(\Pi S^{1,1})\]
where by our abuse of notation $ \taut \in \RO(\Pi S^{1,1})$ is the representation given by pullbacks (see \cref{def:dimension}).
Note that $\taut$ is the M\"obius bundle from \cite[Example 3.4]{Hazel_fund} and does not give an element in $\RO(C_2)$.
\end{example}

As we see in the next lemma, $\RO(\Pi B)$ need not be free abelian.  The notation here is as in \cref{ex:funRP2tw}, see also \cref{{fig:rp2tw}}.

\begin{lem}\label{lem:ROpiRP2}
    There is an isomorphism 
    \[RO\pars{\Pi \RPtwist}\cong \Z^3\times \Z/2\]
    which assigns to $(p,q_0,q_1,\epsilon_0)\in \Z^3\times \Z/2$ the representation $\gamma$ which, on objects, is given by that \begin{align*}
   \gamma(b_0)&=\R^{p,q_0} &
   \gamma(b_1)&=\R^{p,q_1} & \gamma(b)& =C_2\times \R^{p},
   \end{align*} 
   and on morphisms by 
   \begin{align*}
   \gamma(g) &= e\times (-1)^{q_0+q_1} & 
   \gamma(t) &= \tau \times(-1)^{q_1} \quad &
   \gamma(p_0) &=\gamma(p_1)  =1 \end{align*}
\begin{align*}
    \gamma(g_0)&=\begin{pmatrix}
       (-1)^{\epsilon_0}&0\\0&(-1)^{\epsilon_0+q_0+q_1}
   \end{pmatrix}
   \end{align*}
\end{lem}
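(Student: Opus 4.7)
The plan is to proceed in close parallel to the proof of \cref{lem:ROpiS11}, exploiting that $S^{1,1} \hookrightarrow \RPtwist$ so that most of the analysis can be reused, and then isolating the new contribution from the automorphism $g_0 \colon b_0 \to b_0$.

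First, the subdiagram of $\Pi \RPtwist$ on the objects $b, b_0, b_1$ with morphisms $p_0, p_1, g, t$ (from \cref{ex:funRP2tw}) satisfies the same key relations (1) and (2) as the corresponding subdiagram of $\Pi S^{1,1}$ (from \cref{ex:funS11}). Hence the argument of \cref{lem:ROpiS11} applies verbatim to determine the values of $\gamma$ on all objects (yielding the parameters $p, q_0, q_1 \in \Z$) and on $p_0, p_1, g, t$ as displayed. The remaining relations (3), (4), and the strengthening (5) in $\Pi \RPtwist$ are automatically satisfied by these values, since $\gamma(t)^2 = \gamma(g)^2 = \id$ and $O(1)$ is abelian.

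Next, for the new automorphism $g_0$: by \cref{example:VC2}(iii), $\Aut_{\vV_{C_2}}(\R^{p, q_0}) \cong O(1) \times O(1)$, where the two factors correspond to the homotopy classes of equivariant orthogonal maps on the trivial summand $\R^{p - q_0}$ and on the sign summand $\sigma^{q_0}$ of $\R^{p, q_0}$. Write $\gamma(g_0) = \mathrm{diag}((-1)^a, (-1)^b)$ for some $a, b \in \Z/2$. Applying $\gamma$ to relation (6), $g_0 \circ p_0 \simeq p_0 \circ g$, and restricting to the fiber over the identity coset of $C_2/e$ gives $\gamma(g_0) \circ \gamma_e(p_0) = \gamma_e(p_0) \circ \gamma_e(g)$. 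Taking determinants and using $\det \gamma_e(g) = (-1)^{q_0 + q_1}$ from \cref{lem:ROpiS11} yields $(-1)^{a + b} = (-1)^{q_0 + q_1}$, so $b \equiv a + q_0 + q_1 \pmod{2}$. Setting $\epsilon_0 = a$ produces the stated formula for $\gamma(g_0)$, and each $\epsilon_0 \in \Z/2$ is realizable since (6) is the only relation involving $g_0$.

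Finally, to obtain the group isomorphism: distinct quadruples $(p, q_0, q_1, \epsilon_0)$ give non-isomorphic representations, because any natural isomorphism conjugates $\gamma(g_0)$ within the abelian group $\Aut_{\vV_{C_2}}(\R^{p, q_0})$ and hence preserves it, while $p, q_0, q_1$ are invariants of the object values. Direct sums add these parameters coordinatewise (for the $\epsilon_0$ coordinate, this uses that determinants multiply under $\oplus$), giving $\RO(\Pi \RPtwist) \cong \Z^3 \times \Z/2$. The principal new subtlety compared to $S^{1,1}$ is that the fixed-set component through $b_0$ is now a circle rather than a point, producing the automorphism $g_0$; the crux is verifying that relation (6) constrains $\gamma(g_0)$ only up to a single sign, yielding the $\Z/2$-torsion.
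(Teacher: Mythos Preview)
Your proof is correct and takes essentially the same approach as the paper: reduce to the $S^{1,1}$ analysis from \cref{lem:ROpiS11} to determine $p,q_0,q_1$ and the values on $p_0,p_1,g,t$, then use relation (6) together with $\vV_{C_2}(\R^{p,q_0},\R^{p,q_0})\cong O(1)\times O(1)$ to see that $\gamma(g_0)$ is determined up to the free parameter $\epsilon_0\in\Z/2$. You include a bit more detail than the paper (checking the extra relations (3)--(5) and the additivity under direct sum), but the core argument is identical.
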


        \[\xymatrix@C=1.5pc{
&  \Pi  \RPtwist& & \ar[rr]^-\gamma & & & & \vV_{C_2}\\ \\
b_0 \ar@(ur,ul)@{->}_-{g_0}& & b_1  & & &&  \R^{p,q_0} \ar@(ur,ul)@{->}_-{\tiny \begin{pmatrix}
    (-1)^{\epsilon_0}&0\\0&(-1)^{\epsilon_0+q_0+q_1}
\end{pmatrix}} & & \R^{p,q_1}   \\
 & b \ar@{->}[ur]_-{p_1} \ar@{->}[ul]^-{p_0} \ar@(dr,d)@{>}[]^-{g}\ar@(d,dl)@{>}[]^-{t} & && & & & C_2 \times \R^p \ar@{->}[ur]_-{1} \ar@{->}[ul]^-{1} \ar@(dr,d)@{>}[]^-{\ \ \  
 (-1)^{q_0 + q_1}}\ar@(d,dl)@{>}[]^-{(-1)^{q_1} \ }
}\]

\begin{proof}
Much of the proof is the same as the proof of \cref{lem:ROpiS11}.    Up to isomorphism, we have $\gamma(b) =C_2\times \R^p $ and $\gamma(b_i) =\R^{p,q_i} $ for $i=0,1$ since we have bundle maps
\[\gamma(p_i) \colon C_2\times \R^p \to\R^{n_i,q_i} \]
and so $p=n_0=n_1$. 
Applying $\gamma$ to the relation $p_1 \simeq p_1 \circ t$  implies that
\[\gamma(t)= \tau \times (-1)^{q_1} \in C_2\times O(1),\]
as in the proof of \cref{lem:ROpiS11}. 
This, together with the relation $p_0\circ g \simeq p_0 \circ t$,  then gives 
\[ \gamma(g) =e\times (-1)^{q_0+q_1}  \in C_2\times O(1) . \]
Applying $\gamma$ to the relation $g_0 \circ p_0 \simeq p_0 \circ g$  from \cref{ex:funRP2tw} implies that 
\[\det(\gamma_e(g_0))=(-1)^{q_0+q_1}.\]
Recall that
$\gamma(g_0)\in \Hom_{\vV_{C_2}}(\R^{p,q_0},\R^{p,q_0})\cong \mathcal{O}(1)\times \mathcal{O}(1)$ (see Example \ref{example:VC2}) and thus we get
\[\gamma(g_0)=\begin{pmatrix}
    (-1)^{\epsilon_0}&0\\ 0&(-1)^{\epsilon_0+q_0+q_1}
\end{pmatrix}\]
with $\epsilon_0\in \Z/2$. As before, $\det(\gamma_e(p_0))$ and $\det(\gamma_e(p_1))$ can each be chosen to be $1$, and this choice does not matter up to isomorphism. 
\end{proof}

\begin{example}
    The tautological line bundle $\taut$ over $\mathbb{P}(\R^{3,1})$ corresponds to
    \[ \taut= (1,0,1,0)\in \RO(\Pi \mathbb{P}(\R^{3,1})).\]
    The degrees of the constant or homogeneous representations coming from $\RO(C_2)$ are $(p,q,q,0)$. 
\end{example}


\section{Parametrized Homotopy Theory}\label{sec:para}

There are many treatments of parametrized homotopy theory in the literature. 
We will review just enough so that the reader familiar with equivariant homotopy theory can get a general idea and follow the definitions. Our presentation will loosely follow \cite{CW_book}. There are a number of technical details that arise in the study of parametrized homotopy theory which we do not dwell on. For more details, see for example \cite{CW_book}, \cite{malkiewich2023parametrizedlowtech}, or \cite{MaySig}.

\subsection{Parametrized Spaces}

We give an overview of the definitions from parametrized homotopy theory that we will use.   

Fix a finite group $G$. All our spaces are $G$-spaces, all maps are $G$-maps, and we do not clutter the exposition with that. We also assume any spaces are $k$-spaces.  Moreover, we fix a compactly generated $G$-space $B$ with the homotopy type of a $G$-CW complex.\footnote{Following \cite{CW_book}, we restrict the base space to be compactly generated (meaning a $k$-space that is weak Hausdorff) with the homotopy type of a $G$-CW complex.  However, we do not require the spaces over $B$ to be weak Hausdorff.  An alternative using compactly generated weak Hausdorff spaces throughout is discussed in \cite{malkiewich2023parametrizedlowtech}.}

\begin{definition}
    The category of \emph{$G$-spaces over $B$}, denoted $\PTop{G}{B}$, has as objects pairs $(X,p)$ where $p\colon X\to B$ is a $G$-map. Maps in $\PTop{G}{B}$ are $G$-maps $f\colon X \to Y$ that commute with the projections $p$. For each $b\in B$, we denote the fiber over $b$ by $X_b = p^{-1}(b)$. 
\end{definition}

When $B$ is the point $G/G$, the category of spaces over $B$ is equivalent to the category of $G$-spaces.  A based $G$-space is a $G$-space $X$ with a map $s \colon G/G \to X$, which can be thought of as a section of the unique map $p\colon X \to G/G$. This motivates the generalization to the based version. The parametrized analogue of based spaces is that of ex-spaces.\footnote{When we define ex-spaces below, we will implicitly assume that our sections are sufficiently nice. That is, we assume all our ex-spaces are \emph{well-sectioned} in the sense of \cite[Defn. 5.2.5]{MaySig}. This is a cofibrancy condition and we refer the reader to \cite[\S 5.2]{MaySig} for more details.}
\begin{definition}
  An \emph{ex-space over $B$} (or \emph{ex-$G$-space over $B$}) is a $G$-space $(X,p)$  over $B$ together with a section $s \colon B \to X$ of $p$. We denote this as $(X,p,s)$. Maps of ex-spaces are required to commute with both projections and sections. 
  We denote the category of ex-spaces over $B$ by $\PTopb{G}{B}$.
\end{definition}

\begin{rem}
Costenoble--Waner refer to maps in $\PTopb{G}{B}$ as $G$-maps over and under $B$, see \cite[\S 2.2]{CW_book}.
\end{rem}

In the parametrized context, one should always try to think ``fiberwise'' when generalizing standard constructions as in the following constructions.

For the wedge product of ex-spaces $X$ and $Y$ over $B$, we  form their ``parametrized'' wedge
\[X \vee_B Y := X \cup_B Y\]
by gluing along the image of the section. The fibers of the wedge are just $X_b \vee Y_b$ joined at the respective images $s(b)$ of the sections in $X_b$ and $Y_b$.

The parametrized smash product is then defined by the pushout
\[\xymatrix{
X \vee_B Y \ar[r]^-{\subset} \ar[d]^-p & X \times_B Y \ar[d] \\
B \ar[r] & X \wedge_B Y
}\]
where $X \times_B Y$ is the pullback along the projections. The fibers of the smash product are the spaces $X_b \wedge Y_b$.  In the formation of the smash product, we use $s(b)$ as base points in $X_b$ and $Y_b$. 
Furthermore, if $A$ is a based $G$-space (over the point)
and $X$ is an ex-$G$-space over $B$, we can form the external smash product
\begin{align}\label{eq:tensorover}
X \wedge A  = X \wedge_B  (B \times A) 
\end{align}
where $B\times A$ is the space over $B$ with projection $p$ onto $B$ and section $(b,a_0)$ for $a_0$ the base point of $A$. This space has fibers $X_b \wedge A$. We thus see that $\PTopb{G}{B}$ is tensored over based $G$-spaces, $\Top_*^G$.

\begin{defn}
If $V$ is an orthogonal representation of $G$ and $X$ is an ex-$G$-space over $B$, then the \emph{$V$th suspension of $X$} is the space
\[ \Sigma_B^V X = X \wedge S^V \]
as in \eqref{eq:tensorover}.
\end{defn}

If $Y\subseteq X$ and 
\[\xymatrix{Y \ar[rr]^-{\subseteq} \ar[dr]_-p & & X \ar[dl]^-p \\
& B& }\]
commutes, the
\emph{parametrized quotient $X/_BY$} is the pushout
\[\xymatrix{Y \ar[r]^-{\subseteq} \ar[d]_-p & X \ar[d] \\
B \ar[r]^-s  &X/_B Y .
}\]
The map $X/_B Y \to B$ is induced from the pushout diagram from the identity on $B$ and the projection on $X$. The space $X/_B Y$ is automatically an ex-space with the section $s$ as in the diagram.

A homotopy between maps $f,g \colon X \to Y$ in $\PTopb{G}{B}$ is a map $X \wedge I_+ \to Y$ which starts at $f$ and ends at $g$. In the unbased context, homotopies in $\PTop{G}{B}$ are defined similarly, using the cylinder $X \times I  \to Y$ and ignoring the compatibility with the sections.  Thus we can define the following categories.

\begin{defn}
Let $\pPTop{G}{B}$ be the category with objects $G$-spaces over $B$ and homotopy classes of morphisms.  Let $\pPTopb{G}{B}$ be the category with objects ex-$G$-spaces over $B$ and morphisms based homotopy classes of  maps of ex-spaces.
\end{defn}

A weak equivalence in $\pPTop{G}{B}$ or $\pPTopb{G}{B}$  is a map $f\colon X \to Y$ which is an equivariant weak equivalence of total spaces, i.e., when forgetting the projections and sections. The homotopy categories are thus obtained by formally inverting these maps.\footnote{To study homotopy categories, the usual move is to give these categories model structures. There are many subtleties associated with parametrized homotopy theory addressed in \cite{MaySig} and \cite{CW_book}, so we refer the reader to these for a more thorough discussion.} We denote them by $\hPTop{G}{B}$ and $\hPTopb{G}{B}$.

\subsection{Base change functors} Given a $G$-map  
\[F\colon A \to B,\] 
there are a few functors that allow one to move between $\PTopb{G}{A}$  and $\PTopb{G}{B}$. They form various adjunctions as discussed in \cite{CW_book} and \cite{malkiewich2023parametrizedlowtech}. 
We will review only one of these adjunctions here.

Let $X = (X,p,s)$ be an ex-$G$-space over $A$. Then there is an ex-$G$-space over $B$ denoted by 
\[F_!X = (X\cup_A B, q , t)\] 
defined by the diagram
\begin{align*}\label{eq:pushout}\xymatrix{ A \ar[r]^-F \ar[d]_-s& B \ar[d]^-t  \ar@/^2pc/[dd]^-{\id_B}&   \\
X  \ar[d]_-{p}\ar[r] & X\cup_A B \ar[d]^-{q} & \\
 A \ar[r]_-F &   B}\end{align*}
where the top square is a pushout and the projection $q$ to $B$ is the map obtained from the universal property of the pushout.  In this way we get a functor
\[F_! \colon \PTopb{G}{A} \to \PTopb{G}{B}. \]

\begin{example}
For any base $B$, we can apply this construction to the map $\rho \colon B \to G/G$. Then 
\[\rho_! X =  X/s(B)\]
is an ex-$G$-space over the point. The projection map is the unique map to the point and the section is the one that picks out the quotiented $s(B)$.
We thus get a functor to based $G$-spaces
\[\rho_! \colon \PTopb{G}{B} \to \PTopb{G}{*}. \]
\end{example}

As mentioned in \cite{malkiewich2023parametrizedlowtech}, one disadvantage of the classical Thom space construction is that it loses the information of parametrization over the base. The remedy is to consider the spherical bundle:
\begin{example}\label{ex:ThB}
Let $\xi \colon E \to B$ be a vector bundle. Let  $\Th_B(\xi)$ be the spherical bundle over $B$ obtained by one-point compactification on each fiber with section $s$ given by the inclusion of $B$ at infinity. The notation is meant to make one think of $\Th_B(\xi)$ as a ``$B$-relative'' Thom space.  Continuing to let $\rho  \colon B \to G/G$, then 
\[\rho_!(\Th_B(\xi)) = \Th(\xi),\]
where the right hand side is the classical Thom space, $\Th(\xi)=B^\xi$. 
\end{example}

There is also a pullback functor which takes an ex-$G$-space $Y = (Y,q,t)$ over $B$ to the ex-$G$-space 
\[F^*Y =(Y\times_B A, p,s) \]
over $A$ defined by the diagram
\begin{align}\label{eq:pullback}\xymatrix{ A \ar@/_2pc/[dd]_-{\id_A} \ar[r]^-F \ar[d]_-s& B \ar[d]^-t  &   \\
Y\times_B A  \ar[d]_-{p}\ar[r] & Y \ar[d]^-{q} & \\
 A \ar[r]^-F &   B}
 \end{align}
 where the bottom square is a pullback and $s$ is defined using its universal property. In this way we get a functor
\[F^* \colon \PTopb{G}{B} \to \PTopb{G}{A}. \]
 Moreover, the functors $(F_!, F^*)$ form an adjoint pair, with $F_!$ the left adjoint. Note that $F^*$ also has a right adjoint, but we do not need it now. In fact, these form a Quillen adjoint pair 
 and we get an adjunction
\[\xymatrix{ F_! \colon \hPTopb{G}{A} \ar@<1ex>@{->}[r] & \hPTopb{G}{B} \ar@<1ex>@{->}[l] \colon F^* } .\]

\subsection{Change of Groups}
There are various change-of-groups functors in the parametrized context. For example, if $H \subset G$ is a subgroup, recall that there is a functor
\[ i^*_H \colon \PTopb{G}{} \to \PTopb{H}{}  \]
given by restricting the action of $G$ along the inclusion of $H$. This induces restrictions
\[i^*_H \colon \PTopb{G}{B} \to \PTopb{H}{i^*_HB} .\]
For an $H$-space $A$, we also have a functor
\[i_!^H \colon \PTopb{H}{A} \to \PTopb{G}{G\times_H A} \]
which sends $(X,p,s)$ to $(G\times_H X, G\times_H p, G\times_H s )$. So, when $B$ is a $G$-space, we can use the $G$-map $\varepsilon \colon G\times_H i^*_H B \to B$ given by $\varepsilon(g,b)=gb$ and the base-change functor  to define
an induction functor
\[ G_+\wedge_H (-):=\varepsilon_!\circ i_!^H \colon \PTopb{H}{i^*_HB}  \to \PTopb{G}{B} .  \]
As usual, these functors form a Quillen adjoint pair and we get an adjunction
\[\xymatrix{ G_+\wedge_H (-) \colon \hPTopb{H}{i^*_HB} \ar@<1ex>@{->}[r] & \hPTopb{G}{B} \ar@<1ex>@{->}[l] \colon i^*_H } .\]

The functor $ i^*_H $ also has a right adjoint given by co-induction and constructed through a similar ``fiberwise'' construction. See \cite[\S 2.3]{MaySig}.

\subsection{Parametrized orbits and  spheres}

We introduce ex-$G$-spaces that  play the role of the spaces $G_+ \wedge_H S^V$ in the parametrized context and appear in filtration quotients of cell complexes.

\begin{defn}[Parametrized representation spheres] \label{defn:para-rep-sphere}
Let $V$ be an $H$-representation. Form the spherical bundle 
\[G\times_H S^V \to G/H.\] 
The section at infinity gives this the structure of an ex-$G$-space
over $G/H$. Given any point $b \colon G/H \to B$, we define as in \cite[\S 2.6]{CW_book} an ex-$G$-space over $B$ by
\[G_+ \wedge_H S^{V,b} := b_!(G\times_H S^V) = \pars{G\times_H S^V} \cup_{G/H} B .  \] 

We can visualize this as a space $G\times_H S^V$ together with a copy of $B$ glued along the points $G\times_H \infty$. So it is a copy of $B$ with one sphere attached to $b(gH)$ for each coset $gH$ in $G/H$.
See \cref{fig:bubblespace}.
\end{defn}

In the case when $S^V=S^0$, these parametrized spheres play the role of orbits in $\PTopb{G}{B}$.

\begin{defn}[Parametrized orbit]
For $b\colon G/H \to B$, we define the parametrized orbit
\[G/H_+^{b} := G_+\wedge_H S^{0,b},\]
as in \cref{defn:para-rep-sphere}.
\end{defn}

\begin{rem}[Whiskering]\label{rem:whisk}
In order to compute maps between these parametrized representation spheres over $B$, it will be helpful to work with homotopy equivalent spaces using an interval.

Again, let $V$ be an $H$-representation and let $b \colon G/H \to B$ as in \cref{defn:para-rep-sphere}.  Instead of gluing the point at infinity of each sphere to $b(gH)$, we glue the point at infinity to $1\in [0,1]$ and glue $b(gH)$ to $0 \in [0,1]$. In this way we get the ``balloon space'' (thinking of the intervals as strings and the spheres as balloons), which has $G\times_H (S^V \cup [0,1])$ glued to $B$ along the $G\times_H \{0\}$ orbit. 
See  again \cref{fig:bubblespace}.
 
In the language of May--Sigurdsson, we have applied the \emph{whiskering functor} $W$ of \cite[\S 8.3]{MaySig}\footnote{See also \cite[\S 2.5]{malkiewich2023parametrizedlowtech}.} to $G_+\wedge_H S^{V,b}$. There is a natural map 
\[
  W\pars{G_+\wedge_H S^{V,b}} \to G_+\wedge_H S^{V,b},
  \] 
which is an equivalence in $\hPTopb{G}{B}$.  We will often work implicitly with the whiskered replacement.
\end{rem}

\begin{figure}[h]
\begin{center}
\includegraphics[width=0.5\textwidth]{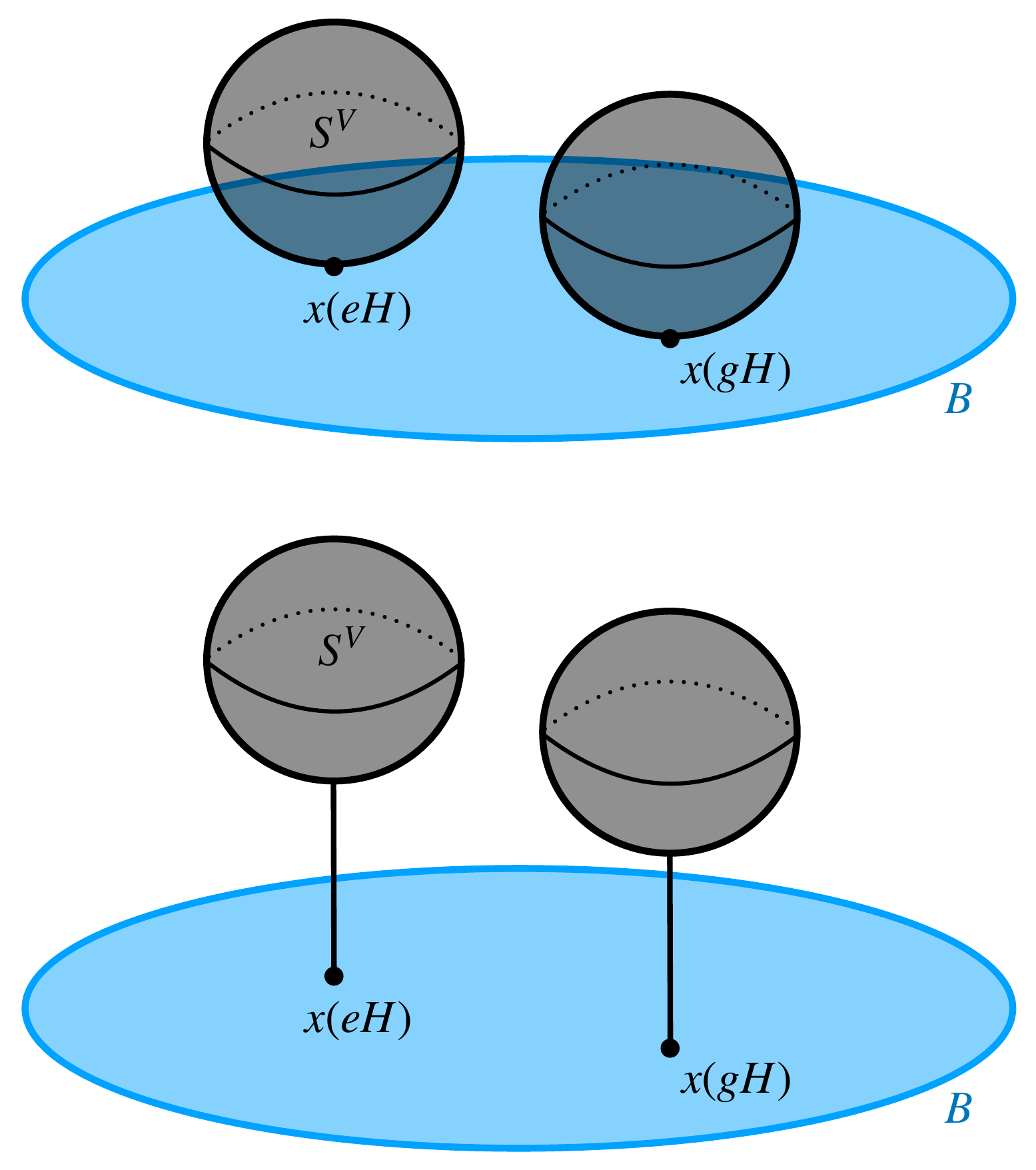}
\caption{$G_+\wedge_H S^{V,b}$ and its whiskered replacement}
\label{fig:bubblespace}
\end{center}
\end{figure}

\subsection{Spectra and Stable Maps}
We now move to the stable world and introduce a little bit of background on parametrized spectra.

A $G$-spectrum $E$ parametrized by $B$, also referred to as a $G$-spectrum over $B$, is the following data. For each finite dimension orthogonal representation $V$ in a complete $G$-universe, we have an ex-$G$-space $E(V)$ over $B$. Given $V\subset W $, there is a structure map
\[\Sigma_B^{W-V}E(V)\to E(W).\]
This gives rise to a parametrized version of the stable homotopy category, which we denote by $\mathcal{SH}^{G}_{B}$.  The morphisms from $E$ to $F$ in $\mathcal{SH}^{G}_{B}$ are denoted by
\[[E,F]^G_B :=\PSH{G}{B}(E,F). \]

There are the usual suspension and loop-space functors, with an adjunction
\[\xymatrix{ \Sigma^{\infty}_B \colon \hPTopb{G}{B} \ar@<1ex>@{->}[r] & \PSH{G}{B}\ar@<1ex>@{->}[l] \colon \Omega_B^\infty } .\]
As is usual, if $X$ is an ex-$G$-space over $B$, we simply write $X$ for $\Sigma^\infty_B X$.

Our goal here is to give a concrete description of stable maps 
\[[ G_+\wedge_H S^{V,b},  E]^G_B\]
for $b\colon G/H \to B$ a point of $B$ and $E$ a $G$-spectrum over $B$. We will use this group of stable maps in the definitions of stable homotopy groups and cellular homology. 

The description we give is a consequence of \cite[\S 2.5]{CW_book}, in particular the discussion after Corollary 2.5.17. To state it, we need to introduce the concept of \emph{lax maps}.

A \emph{lax map}  of ex-$G$-spaces over $B$ is a $G$-map $f \colon X \to Y$ with the property that, in the following diagram,
\[\xymatrix{ &B \ar[dr]^-s \ar[dl]_-s &\\
X \ar[rr]^f \ar[dr]_-p & & Y \ar[dl]^-p \\
& B & }\]
the top triangle commutes and the bottom triangle commutes up to (Moore) homotopy rel $s(B)$.  
A \emph{lax homotopy} is a lax map $X \wedge I_+ \to Y$.

\begin{rem}
In \cite{CW_book}, maps in $\PTopb{G}{B}$ are called strict maps. 
\end{rem}

\begin{defn}
Let $\pPTopblax{G}{B}$ be the category whose objects are ex-$G$-spaces over $B$ and morphisms are lax homotopy classes of lax maps.\footnote{In \cite{CW_book}, the authors use the decoration ``$\lambda$'' to indicate when they are working with lax maps and lax homotopies.
} 
\end{defn}

We can now give a description of certain stable maps. See \cite[Prop. 2.5.16]{CW_book} and the discussion immediately following it.
\begin{theorem}
Let $b \colon G/H \to B$ and $E$ be a $G$-spectrum over $B$. Let $V \in RO(H)$. 
There is an isomorphism
\[[G_+\wedge_H S^{V,b},E ]_{B}^G \cong \colim_{W} \pPTopblax{G}{B}(G_+\wedge_H S^{V,b}\wedge S^W, E(W)) \]
where the colimit runs over the $G$-representations $W$ in our complete $G$-universe.
\end{theorem}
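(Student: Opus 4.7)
The plan is to reduce the stable mapping space computation to an unstable one via the $(\Sigma^\infty_B, \Omega^\infty_B)$ adjunction, and then to identify the resulting unstable hom with the asserted colimit of lax mapping spaces. First I would invoke the adjunction
\[ [\Sigma^\infty_B X, E]^G_B \cong [X, \Omega^\infty_B E]_B^G \]
and observe that, by standard manipulation of parametrized spectra, $\Omega^\infty_B E$ may be modeled as a (homotopy) colimit of the loop ex-spaces $\Omega^W_B E(W)$ as $W$ ranges over representations in the complete $G$-universe. Taking $X = G_+\wedge_H S^{V,b}$ and moving the $S^W$'s across the loop-suspension adjunction (which holds fiberwise and hence parametrically) would give a colimit of the form $\colim_W [G_+\wedge_H S^{V,b}\wedge S^W, E(W)]$. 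The finite-cell structure of the source (one cell in the sense of \cref{rem:whisk}, after whiskering) ensures that it is compact enough for this map-out-of-a-colimit to commute with the colimit, as is standard.

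The main obstacle---and the reason for the appearance of $\pPTopblax{G}{B}$ rather than $\pPTopb{G}{B}$---is that in the parametrized setting, the natural adjunction $(\Sigma^\infty_B, \Omega^\infty_B)$ does not live cleanly in the strict category. The counit of the loop-suspension adjunction fails to preserve sections on the nose: the composite $s \circ p$ of section with projection is naturally only homotopic to, not equal to, the identity after looping and suspending. Consequently the homs on the unstable side must be interpreted with maps that respect sections strictly but for which the projection commutes only up to homotopy relative to the section---precisely the definition of a lax map. I would make this precise by following the analysis of \cite[\S 2.5]{CW_book}: the strict homotopy category of ex-spaces over $B$ is not tensored over based spaces in the correct derived sense, whereas the lax category is, and it is in the lax category that the smash-loop adjunction with $S^W$ is a genuine equivalence.

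Once the lax formalism is in place, the remaining work is essentially formal. The source $G_+\wedge_H S^{V,b}$ is, after whiskering, a cofibrant ex-space, and its smash powers with $S^W$ remain cofibrant; hence lax homotopy classes of lax maps into $E(W)$ compute the derived mapping space. The structure maps $\Sigma_B^{W'-W} E(W) \to E(W')$ of the spectrum $E$ then assemble the sets $\pPTopblax{G}{B}(G_+\wedge_H S^{V,b}\wedge S^W, E(W))$ into a directed system, and the colimit of this system produces the stable hom set on the left. A final sanity check is to verify that for $B = G/G$ (so that ex-$G$-spaces are just based $G$-spaces and ``lax'' becomes vacuous), the isomorphism recovers the familiar formula
\[ [G_+\wedge_H S^V, E]^G \cong \colim_W [G_+\wedge_H S^{V+W}, E(W)]^G , \]
which is the classical colimit description of morphisms in the equivariant stable homotopy category.
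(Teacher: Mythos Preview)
The paper does not supply a proof of this statement: it is recorded with a bare citation to \cite[Prop.~2.5.16]{CW_book} and the discussion immediately following, as part of the background review. So there is no argument in the paper itself to compare your proposal against.

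Your outline---invoke the $(\Sigma^\infty_B,\Omega^\infty_B)$ adjunction, model $\Omega^\infty_B E$ as a colimit of $\Omega^W_B E(W)$, move $S^W$ across, and use compactness of the source---is the correct shape, and your sanity check at $B=G/G$ is apt. The one place where your explanation goes astray is the reason lax maps appear. It is not that ``the counit of the loop--suspension adjunction fails to preserve sections''; the issue is more basic and already present before any spectra enter. Strict homotopy classes of strict maps out of an ex-space such as $G_+\wedge_H S^{V,b}$ need not compute the correct derived hom into an arbitrary target $E(W)$, because the target may fail the fibrancy conditions needed for strict maps to see the right homotopy type. Passing to lax maps (projection commuting only up to Moore homotopy rel the section) is precisely the device in \cite{CW_book} that repairs this: it makes well-sectioned sources behave like cofibrant objects against all targets. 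That is the substance of Propositions~2.5.13--2.5.16 there, and it is what the paper is pointing to with its citation rather than something you can deduce from the loop--suspension adjunction alone.
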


For a $G$-map $F \colon A \to B$, we also get the base-change adjunction
\[\xymatrix{ F_! \colon \PSH{G}{A} \ar@<1ex>@{->}[r] & \PSH{G}{B} \ar@<1ex>@{->}[l] \colon F^* } \]
and the induction/restriction change-of-groups for $H\subset G$
\[\xymatrix{ G_+\wedge_H - \colon \PSH{H}{i^*_HB} \ar@<1ex>@{->}[r] & \PSH{G}{B} \ar@<1ex>@{->}[l] \colon i^*_H } .\]
In addition, there is a Wirthm\"uller isomorphism, so that $G_+\wedge_H-$ is also a right-adjoint to $i^*_H$.

\subsection{Restrictions and transfers}\label{cont:map}

We will give two constructions between parametrized orbits, which correspond to restrictions (or right-way maps) and transfers (or wrong-way maps). These restrictions and transfers will be twisted by a representation $\gamma \in \RO(\Pi B)$. For $B=G/G$ and $\gamma=0$, these maps are just the usual restrictions and transfers in $G$-spectra. In the next section, these constructions will allow us to define homotopy groups as coefficient systems.  

Many of the key points in the construction of parametrized restrictions and transfers can be found already in the proof of Lemma 4.2 in \cite{CW_Duality}, although they are not working in the parametrized setting there. 
 The constructions we present are extracted from Section 2.6 in \cite{CW_book}.
For simplicity, we describe the restrictions and transfers only for actual representations.

Throughout this section, we let
\begin{enumerate}[(a)]
\item $x \colon G/H \to B$ and $y\colon G/K \to B$ be objects of $\Pi B$,
\item  $f=(\alpha,\omega) \colon x \to y$ be a morphism in $\Pi B$,  
\item $\gamma \colon \Pi B \to \cV_G$ be an actual representation, 
\item with $\gamma_0(x)$ the fiber over the identity coset $eH$ of $\gamma(x)$, so that
\[\gamma(x)\cong G\times_H \gamma_0(x),\]
 and similarly for $y$. 
\end{enumerate}

We construct stable maps
\[\res_\gamma(f) \colon  G_+\wedge_H S^{\gamma_0(x),x}  \to G_+\wedge_K S^{\gamma_0(y),y} \]
as follows. Since $\gamma$ is a representation, it associates to $f=(\alpha,\omega)$ a 
map
\[\gamma(f)  \colon \gamma(x) \to \gamma(y),\]
i.e., a $G$ homotopy class of a $G$-bundle map
\[\xymatrix@C=3pc{ G \times_H S^{\gamma_0(x)} \ar[r]^-{\gamma(f)} \ar[d]&  G\times_K  S^{\gamma_0(y)} \ar[d] \\
G/H \ar[r]^-\alpha & G/K.
}\]
First we apply the whiskering functor to write 
\[G_+\wedge_H S^{\gamma_0(x),x} \simeq B\cup_{x} \pars{G \times_H [0,1]} \cup_{G\times_H \infty} \pars{G \times_H S^{\gamma_0(x)} } \] 
as in \cref{rem:whisk}, where we glue onto $B$ strings attached at the orbit and a sphere on the end of each string. Then the identity on $B$, the path $\omega$ on the whiskers $G \times_H [0,1]$ and the bundle map $\gamma(f)$ on $G \times_H S^{\gamma_0(x)}$ glue to give a lax map
\begin{align}\label{eq:resV}
    \res_\gamma(f) \colon G_+\wedge_H S^{\gamma_0(x),x} \to G_+\wedge_K S^{\gamma_0(y),y} .
\end{align}

\begin{defn}[Restrictions]
The \emph{$\gamma$-twisted restriction of $f$}  is the stable map
\[ \res_\gamma(f)  \in [G_+\wedge_H S^{\gamma_0(x),x}, G_+\wedge_K S^{\gamma_0(y),y} ]_{B}^{G} .\]
 Let
\[\res(f) = \res_0(f)\]
be the non-twisted restriction. 
\end{defn}

\begin{figure}[h]
\begin{center}
\includegraphics[width=\textwidth]{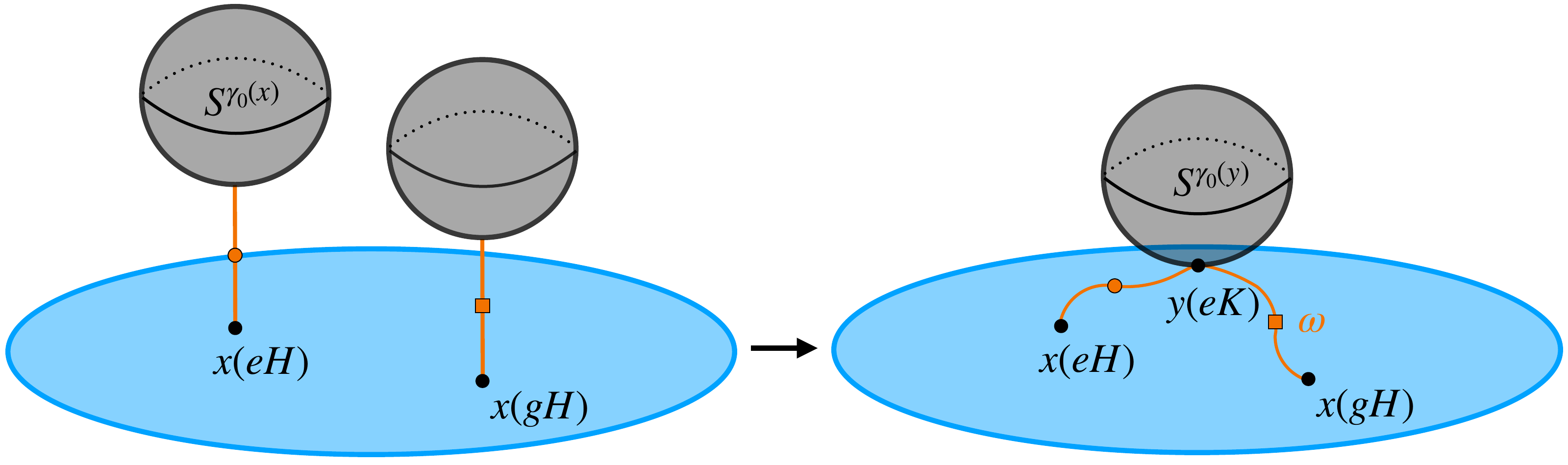}
\caption{The restriction} 
\label{fig:mapballoon}
\end{center}
\end{figure}

We now turn to the transfers. Our goal is to construct stable maps
\[  \tr_\gamma(f) \colon  G_+\wedge_K S^{\gamma_0(y),y}  \to   G_+\wedge_H S^{\gamma_0(x),x},\]
for any morphism $f\colon x\to y$ in $\Pi B$. This construction can be found in the proof of Theorem 2.6.4 in \cite{CW_book}. 

Recall that any map of orbits decomposes as an automorphism of conjugacy classes followed by a quotient.  We first construct  $\tr_\gamma(q)$ in the special case of a quotient map $q=(\rho,c) \colon y\rho \to y$
for $y\colon G/K \to B$, $L\subset K$ a subgroup, $\rho \colon G/L \to G/K$ the quotient, and $c$ the constant homotopy at $y\rho \colon G/L \to B$. The map $q$ is represented by the diagram
\[\xymatrix{ G/L \ar[d]_-{y\rho}\ar[r]^-\rho\ar@{=>}[dr]^-{c} & G/K \ar[d]^-y\\
B \ar@{=}[r] & B.
}\]

In this case, the transfer for $q$ is defined in a familiar  way using the Pontryagin--Thom collapse map, as we explain here. 
View $\rho \colon G/L \to G/K$ as the fiber bundle $G\times_K K/L \to G/K$. Choose an embedding $K/L \subset \gamma_0(y)+i^*_KV $ for a $G$-representation $V$.
Let $W=\gamma_0(y)+i^*_KV$.
Then $G/L$ embeds in the bundle $G\times_K W$. Consider the normal bundle to $K/L\subset W$. The Pontryagin--Thom collapse for the embedding followed by a shearing isomorphism gives a $K$-map 
\begin{align}\label{eq:trnew}\xymatrix{S^{\gamma_0(y)}\wedge S^{i^*_KV} \ar@{=}[d]\ar[rr] & &  K_+\wedge_L S^{i^*_L \gamma_0(y)}\wedge S^{i^*_KV}
\\
S^W \ar[r]^-{\text{Collapse}} &  K_+ \wedge  S^W \ar@{=}[r] &  K_+\wedge S^{\gamma_0(y)} \wedge S^{i^*_KV}. \ar[u]^-{\text{Shear}\wedge \id}
 }\end{align}
This then gives a $G$-map 
\[c_V\colon G\times_K (S^{\gamma_0(y)}\wedge S^{i^*_KV}) \to G\times_K ((K_+\wedge_L S^{i^*_L\gamma_0(y)}) \wedge S^{i^*_KV}).  \]
The strange mix of smash and times is intentional. In the target of $c_V$, $G\times_K (K_+\wedge_L S^{i^*_L\gamma_0(y)} \wedge S^{i^*_KV})$, each sphere in the wedge $K_+\wedge_L S^{i^*_L\gamma_0(y)} \wedge S^{i^*_KV}$
is attached to the same point of $B$, namely $y\rho(kL)  = y(K)$. So this glues along $B$ to gives a lax map
\[ \xymatrix@C=1pc{
G\times_K (S^{\gamma_0(y)}\wedge S^{i^*_KV})\cup_y B \ar[r] \ar[d]_\cong &  G\times_K ((K_+\wedge_L S^{i^*_L\gamma_0(y)}) \wedge S^{i^*_KV}) \cup_{y\rho} B
\ar[d]_\cong \\
G_+\wedge_K S^{\gamma_0(y),y} \wedge S^V \ar[r] &  G_+\wedge_L S^{i^*_L\gamma_0(y),y\rho} \wedge S^V \ar[d]^-\cong \\
& G_+ \wedge_L S^{\gamma_0(y\rho),y\rho} \wedge S^V
}\]
where in the last vertical arrow, we used the inverse of the isomorphism 
\[  \gamma_0(y\rho) \xrightarrow{\cong} i^*_L\gamma_0(y)\]
induced by $\gamma(q)$ to identify
\[G_+ \wedge_L S^{\gamma_0(y\rho),y\rho}  \cong G_+ \wedge_L S^{i^*_L\gamma_0(y),y\rho} .\]
Thus, we have a lax map
\[\tr_\gamma(q)(V)\colon  G_+\wedge_K S^{ \gamma_0(y),y} \wedge S^V \to G_+\wedge_L S^{\gamma_0(y\rho),y\rho} \wedge S^V . \]
We get a corresponding stable map
\[\tr_\gamma(q) \colon G_+\wedge_K S^{\gamma_0(y),y} \to G_+\wedge_L S^{\gamma_0(y\rho),y\rho} .\]

To define the transfer for a general $f\colon x\to y$, we factor $f$ as $f=qh$, using the fact that maps of orbits factor as an isomorphism followed by a quotient.  So we let 
$h=(\beta,\omega)  \colon x \to y\rho$ be an isomorphism, and 
we let $q = (\rho,c)\colon y\rho\to y$ for  $y\rho \colon G/L \to B$, where $L\subset K$ is conjugate to $H$, $\rho$ is the quotient map,  and $c$ is the constant homotopy at $y\rho$. Therefore, $f = (\alpha, \omega)$ can be represented as the composite in the diagram
\[ \xymatrix{ G/H\ar[d]^-x \ar[r]^-{\beta} \ar@{=>}[dr]^-\omega & G/L \ar[d]^-{y\rho} \ar[r]^-{\rho} \ar@{=>}[dr]^-{c} & G/K \ar[d]^y   \\
B \ar@{=}[r] & B \ar@{=}[r] &B.   }\]
We now appeal to the fact that the transfer of the isomorphism $h$ should be equal to the restriction of its inverse to finish the construction of the transfer of $f$. 
\begin{defn}[Transfers]
The \emph{$\gamma$-twisted transfer of $f=qh$} is the stable map
\[ \tr_\gamma(f) \in [G_+\wedge_K S^{\gamma_0(y),y},G_+\wedge_H S^{\gamma_0(x),x} ]_B^G\]
obtained as the composition
\[\tr_\gamma(f) := \res_\gamma(h^{-1})\tr_\gamma(q),\]
as described above.
\end{defn}

\begin{rem}
We can adapt these definitions to representations of the form $ \gamma+n$ for $\gamma$ actual and $n\in \Z$ 
using fiberwise suspension by $S^n$ throughout. For more general  representations, the definitions get more complicated due to the technicalities involved with defining the morphisms in $\vV_G$.
\end{rem}

\subsection{Stable orbit category of $B$ and parametrized Mackey functors}\label{sec:paramackey}

In $\RO(G)$-graded cohomology theories, we require coefficients in a  Mackey functor, an equivariant analogue of an abelian group. Parametrized equivariant cohomology requires coefficients in a parametrized Mackey functor over $B$. In this section we begin by recalling some categorical notions related to Mackey functors in the classical sense. We then describe how one translates these notions to the $\RO(\Pi B)$-graded setting.

There are many definitions of Mackey functors; we present multiple perspectives in this section and work with whichever is most convenient when explaining the connection to parametrized Mackey functors. We start by reviewing span categories and their Grothendieck completions.

Throughout this section, we let $\mathcal{C}$ be a small category with pullbacks and finite coproducts, such that the pullback distributes over the coproduct. 

\begin{defn}\label{def:span}
Define $\Span(\mathcal{C})$ to be the category whose objects are the objects of $\mathcal{C}$. The morphisms in this category are equivalence classes of spans 
\[ \xymatrix{ X & Z \ar[r] \ar[l]& Y,} \]
where two spans $X \leftarrow Z \rightarrow Y$ and $X \leftarrow W \rightarrow Y$ are equivalent if there exists an isomorphism $Z \cong W$ such that the following diagram commutes: 
\[\xymatrix@R=0.5pc{ &Z \ar[dd]^-\cong \ar[dr] \ar[dl] & \\
X  & & Y \\
& \ar[ur] \ar[ul] W & }\]
Morphism composition in this category is given by pullbacks. 
\end{defn}

The category $\Span(\mathcal{C})$ is not a pre-additive category, so as usual we define another category to fix this.
\begin{defn}
The category $\Span^+(\mathcal{C})$ has the same objects as $\Span(\mathcal{C})$. The morphisms $\Span^+(\mathcal{C})(X,Y)$ are given by the Grothendieck group completion of $\Span(\mathcal{C})(X,Y)$, where the monoid operation on spans is obtained from the coproduct
    \[\xymatrix{X & Z_1 \coprod Z_2 \ar[r] \ar[l] & Y.}\]
\end{defn}

Applying this construction to the category of finite $G$-sets $\Fin^G$, we get the Burnside category.
\begin{definition}\label{def:Burnside}
The \emph{Burnside category} of $G$ is the category
    \[
    \Burn{G} := \Span^+(\Fin^G).
    \] 
    Let $\sorb{G}$ be the full subcategory of $\Burn{G}$ whose objects are orbits.  We call $\sorb{G}$ the \emph{stable orbit category}.
\end{definition}

The name stable orbit category for $\sorb{G}$ is justified by a result
 in \cite[Corollary V.9.4]{LMS}.  There the authors show that  $ [G/H_+, G/K_+]^G$ can be described as the free abelian group on equivalence classes of spans in $\cO_G$, so that
 \begin{align}\label{eq:staborb}
 \Burn{G}(G/H,G/K) =\sorb{G}(G/H,G/K)\cong [G/H_+, G/K_+]^G .
 \end{align}
 From this, it follows that:
 \begin{lem}\label{lem:staborb}
The category $\sorb{G}$ is isomorphic to the full subcategory of the equivariant stable homotopy category $\mathcal{SH}^G$ whose objects are the suspension spectra 
 of orbits $G/H_+$. Equivalently, $\sorb{G}$ is isomorphic to a category whose objects are those of $\cO_G$, and in which the  morphisms from $G/H$ to $G/K$ are the stable maps $[G/H_+,G/K_+]^G$.
\end{lem}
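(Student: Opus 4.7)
The plan is to construct an isomorphism of categories
\[\Phi\colon \sorb{G} \xrightarrow{\;\cong\;} \cC^{\mathrm{st}},\]
where $\cC^{\mathrm{st}}$ is the full subcategory of $\mathcal{SH}^G$ on the suspension spectra of orbits. The second description of the target in the lemma statement (the abstract category with morphism sets $[G/H_+,G/K_+]^G$) is tautologically isomorphic to $\cC^{\mathrm{st}}$, so building $\Phi$ will prove both claims at once.

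On objects I would set $\Phi(G/H) = \Sigma^\infty_+ G/H$. On morphisms I would invoke the isomorphism \eqref{eq:staborb}, whose description in \cite{LMS} sends a span $G/H \xleftarrow{f} G/L \xrightarrow{g} G/K$ to the stable composite $\Sigma^\infty_+ g \circ \tr(f)$, where $\tr(f)$ is the equivariant Pontryagin--Thom transfer associated to the map of finite $G$-sets $f$. Additive extension from $\Span(\Fin^G)(G/H,G/K)$ to $\Burn{G}(G/H,G/K)$ is forced, since $[-,-]^G$ is already abelian-group valued, so $\Phi$ is automatically well-defined and bijective on hom-sets.

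The main step, and the main obstacle, is verifying that $\Phi$ respects composition. Given two composable spans
\[G/H \xleftarrow{f} G/L \xrightarrow{g} G/K \quad \text{and} \quad G/K \xleftarrow{f'} G/M \xrightarrow{g'} G/J,\]
their composite in $\sorb{G}$ is computed via the pullback $P = G/L \times_{G/K} G/M$ in $\Fin^G$, which decomposes as a coproduct of orbits. Under $\Phi$, matching this to the composite stable map reduces to the base-change identity
\[\tr(f') \circ \Sigma^\infty_+ g \;=\; \Sigma^\infty_+ \widetilde g \circ \tr(\widetilde{f'})\]
for the pullback legs $\widetilde{f'}\colon P \to G/L$ and $\widetilde g\colon P \to G/M$. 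This is the Mackey (or double coset) property of the equivariant Pontryagin--Thom transfer, and is already part of the input to the proof of \eqref{eq:staborb} in \cite[Chap.~V]{LMS}. Preservation of identities is automatic since the identity span $G/H \xleftarrow{\id} G/H \xrightarrow{\id} G/H$ maps to $\Sigma^\infty_+ \id \circ \tr(\id) = \id$. Hence the proof reduces to citing \eqref{eq:staborb} together with this functoriality check, with no genuinely new content to supply.
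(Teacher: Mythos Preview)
Your proposal is correct and takes essentially the same approach as the paper, which simply cites \eqref{eq:staborb} (i.e., \cite[Corollary V.9.4]{LMS}) and states that the lemma follows. You have supplied more detail than the paper does---in particular the functoriality check via the base-change property of the Pontryagin--Thom transfer---but this is exactly the content packaged into the LMS reference, so there is no substantive difference.
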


\begin{rem}
In \cite[Ch. IX, \S 4]{MayAlaskanotes}, the category of \cref{lem:staborb} is referred to as both the Burnside category and the stable orbit category.
\end{rem}
We can now state the definition of Mackey functors that we will generalize in the parametrized context.
\begin{defn}[{\cite[Ch. IX, \S 4]{MayAlaskanotes}}]\label{def:staborb}
    A $G$-\emph{Mackey functor} is an additive functor
    \[ \xymatrix{ \uM \colon \pars{\sorb{G}}^\op \ar[r] & \mathrm{Ab}.} \] 
    Morphisms of Mackey functors are natural transformations.
\end{defn}

\begin{rem}\label{rem:mackeyfunctorburn}
Recognizing that any finite $G$-set is isomorphic to a disjoint union of orbits, we see that $\Burn{G}$ is generated by $\sorb{G}$ under disjoint unions. 
As a consequence,
the category of $G$-Mackey functors is equivalent to the category of additive   functors
    \[ \xymatrix{  \uM \colon \pars{\Burn{G}}^{\op} \ar[r] & \mathrm{Ab}} \]
that send disjoint unions to direct sums.
\end{rem}

More concretely, a $G$-Mackey functor $\uM$ is the data of an abelian group $\uM(G/H)$ for each orbit $G/H$, together with transfer maps and restriction maps subject to some composition rules (see for example \cite{Webb}). 
We explain how these maps arise from the Burnside category. 

\begin{defn}
For $\cC$ as in \cref{def:span} and a $\cC$-morphism $f\colon X \to Y$, define $\res(f) \colon X \to Y$  and $\tr(f) \colon Y \to X$  in $\Span(\cC)$ to be the spans
\begin{align*} \res(f)&:=\xymatrix@R=0.5pc{ X &X \ar[l]_-{\id_X} \ar[r]^f& Y  } & \tr(f) &:= \xymatrix@R=0.5pc{ 
 Y &X \ar[r]^-{\id_X} \ar[l]_f &X   }
 \end{align*}
 We also denote the images of $\tr(f)$ and $\res(f)$ in $\Span^+(\cC)$ by the same name. 
 \end{defn}
For any morphism of finite $G$-sets $f\colon G/K \to G/H$, the \emph{transfer} in the Mackey functor is
\[f_*:=\uM(\mathrm{tr}(f)) : \uM(G/K) \to \uM(G/H)\] 
and the \emph{restriction} is
\[f^*:=\uM(\mathrm{res}(f)): \uM(G/H) \to \uM(G/K).\] 
 This perspective on Mackey functors lends itself nicely to presentations in Lewis diagrams, as we shall see in the case of parametrized Mackey functors.

\bigskip

We are now ready to explain how to use the work of Costenoble--Waner in \cite[\S 2.6]{CW_book} to define the notion of Mackey functors parametrized by $B$.

We want to define a Burnside category $\BBurn{G}{B}$ for a $G$-space $B$. First, we need the following definition which will replace the role of finite $G$-sets in the classical construction of the Burnside category. 
\begin{defn}
Let $B$ be a $G$-space and $\BFin{G}{B}$ be the category whose objects are continuous functions $x \colon X \to B$, where $X$ is a finite $G$-set. For $y\colon Y \to B$ another object, a morphism $f\colon x \to y$ in $\BFin{G}{B}$ is an equivalence class of  pairs $f=(\alpha, \omega)$ where $\alpha \colon X \to Y$ is a morphism of finite $G$-sets and $\omega$ is a homotopy from $x$ to $y\circ \alpha$. Morphisms $f=(\alpha,\omega)$ and $f'=(\alpha',\omega')$ are equivalent if $\alpha=\alpha'$, and $\omega$ and $\omega'$ are homotopic relative their endpoints. 
\end{defn}

\begin{lem}
The category $\BFin{G}{B}$ has pullbacks and finite coproducts, and the pullback distributes over the coproduct.
\end{lem}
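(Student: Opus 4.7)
The plan is to build each universal construction by combining the corresponding construction in $\Fin^G$ with a naturally produced homotopy, then inherit distributivity from $\Fin^G$.

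First, for finite coproducts: given objects $x_i \colon X_i \to B$ for $i$ in a finite index set, I would define the coproduct to be $\bigsqcup_i x_i \colon \bigsqcup_i X_i \to B$, with structure maps the evident inclusions of $G$-sets paired with constant homotopies. Verifying the universal property is straightforward because a morphism out of the coproduct in $\BFin{G}{B}$ is the data of a $G$-map out of $\bigsqcup_i X_i$ (which is determined by its restrictions to each $X_i$) together with a homotopy on $\bigsqcup_i X_i$ (which is determined by its restriction to each $X_i$, up to the required rel-endpoints homotopy equivalence).

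The core of the proof is constructing pullbacks, and this is the step I expect to be the main obstacle because morphisms are not strict over $B$ but carry a homotopy datum. Given morphisms $f = (\alpha, \omega) \colon x \to z$ and $g = (\beta, \eta) \colon y \to z$ with $x \colon X \to B$, $y \colon Y \to B$, $z \colon Z \to B$, I would form $W = X \times_Z Y$ with projections $\pi_X, \pi_Y$ (this is the pullback in $\Fin^G$, which exists), and declare the apex of the pullback square to be $w := x \circ \pi_X \colon W \to B$. The morphism $\pi_X \colon w \to x$ in $\BFin{G}{B}$ is then $(\pi_X, c)$ with $c$ the constant homotopy, while the morphism $\pi_Y \colon w \to y$ is $(\pi_Y, \bar{\eta}_Y \star \omega_X)$, where $\omega_X = \omega \circ (\pi_X \times \id_I)$ witnesses $x \pi_X \simeq z \alpha \pi_X = z \beta \pi_Y$ and $\bar{\eta}_Y$ is the reverse of $\eta \circ (\pi_Y \times \id_I)$. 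With these, the square commutes up to the required rel-endpoints homotopy class of paths. To verify the universal property, given any object $v \colon V \to B$ with morphisms $(a, \mu) \colon v \to x$ and $(b, \nu) \colon v \to y$ whose postcompositions with $f$ and $g$ agree in $\BFin{G}{B}$, the $G$-maps $a$ and $b$ satisfy $\alpha a = \beta b$, so the pullback property in $\Fin^G$ produces a unique $G$-map $V \to W$. A short homotopy manipulation, using that the two composites agree up to homotopy rel endpoints, then gives the required homotopy data and uniqueness.

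Finally, distributivity of pullback over coproduct follows because the underlying $G$-set of the pullback and the coproduct are computed as in $\Fin^G$, where distributivity already holds, and the homotopy data produced by the constructions above is compatible with disjoint union in the obvious way. Hence the canonical comparison map
\[(y_1 \sqcup y_2) \times_z x \longrightarrow (y_1 \times_z x) \sqcup (y_2 \times_z x)\]
in $\BFin{G}{B}$ is the one induced by distributivity in $\Fin^G$ together with constant homotopies, and it is therefore an isomorphism.
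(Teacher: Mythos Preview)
Your proof is correct and follows essentially the same approach as the paper: build coproducts and pullbacks in $\Fin^G$ and then equip them with suitable homotopy data, inheriting distributivity from $\Fin^G$. The only cosmetic difference is your choice of apex map $w = x\circ\pi_X$ (asymmetric, so one projection carries the constant homotopy and the other the concatenation $\bar{\eta}_Y \star \omega_X$), whereas the paper chooses the symmetric $x\times_z y := z\alpha_x\pi_X = z\alpha_y\pi_Y$, with both projections carrying the reversed homotopies $\omega_x^{-1}(\pi_X\times\id_I)$ and $\omega_y^{-1}(\pi_Y\times\id_I)$; the two apices are isomorphic via $(\id_W,\omega_X)$, so either works.
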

\begin{proof}
Let $x \colon X \to B$, $y \colon Y \to B$ be objects of $\BFin{G}{B}$. The coproduct of two objects is then
\[x \coprod y \colon X \coprod Y \to B\]
where $X \coprod Y$ is the coproduct in $\FinG$. 

For pullbacks, suppose $z \colon Z \to B$ is another object in $\BFin{G}{B}$ and that we have morphisms $f_x = (\alpha_x, \omega_x) \colon x \to z$ and $f_y = (\alpha_y, \omega_y)\colon y\to z$. 
Let $X\times_Z Y$ be the pullback in $\Fin^{G}$
\[\xymatrix{
X\times_Z Y \ar[r]^-{\pi_Y}\ar[d]_-{\pi_X} & Y \ar[d]^-{\alpha_y} \\ 
X \ar[r]_{\alpha_x} & Z,
}\]
so that the points of $X\times_Z Y$ are pairs $(a_x,a_y) \in X \times Y$ such that
\[\alpha_x(a_x) = \alpha_y(a_y) \in Z.\]
Let
\[x\times_Z y \colon X\times_Z Y  \to B \]
be the map
$x\times_Z y:= z \alpha_x \pi_X = z \alpha_y \pi_Y$.
We have maps
\begin{align*}
g_x &= (\pi_X, \omega_x^{-1} (\pi_X  \times \id_I)) \colon x\times_Z y \to x  \\
g_y &= (\pi_Y, \omega_y^{-1}  (\pi_Y \times\id_I  )) \colon x\times_Z y \to y,  
\end{align*}
where here $\omega_x^{-1}(a_x,t) =\omega_x(a_x,1-t)$ and similarly for $\omega_y^{-1}$. This
gives a diagram
\[\xymatrix{
x\times_z y \ar[r]^-{g_y} \ar[d]_-{g_x}  & y  \ar[d]^-{f_y}\\
x  \ar[r]_-{f_x} & z
}.\]
It is a nice exercise to prove that this satisfies the universal property of the pullback. Note that it is important in that argument to use that the homotopy morphisms in $\BFin{G}{B}$ are homotopy classes of maps. It is also straightforward to show that pullbacks distribute over coproducts.
\end{proof}

\begin{defn}
We let $\BBurn{G}{B}$ denote the category 
\[\BBurn{G}{B} = \Span^+(\BFin{G}{B}) \]
and call it the \emph{Burnside category} of $B$. We let
\[\Bsorb{G}{B} \subset \BBurn{G}{B}\]
be the full-subcategory whose objects are those of $\Pi B$. That is, the objects of $\Bsorb{G}{B}$ are morphisms $x \colon G/H \to B$ whose source is an orbit. 
\end{defn}

We will later call $\Bsorb{G}{B}$ the \emph{stable orbit category of $B$}, and this will be justified in a similar way as in \cref{def:Burnside}.

\begin{rem}
Morphisms 
\[\Bsorb{G}{B} (x,y) =\BBurn{G}{B}(x,y) \]
can be described as the free abelian group on equivalence classes of spans 
\[\xymatrix{x & z \ar[r] \ar[l] &y} \]
in $\Pi B$.
Here, $x \colon G/H \to B$, $y \colon G/K \to B$, and $z \colon G/L \to B$.
\end{rem}

The next goal is to connect $\Bsorb{G}{B}$ to a category with the same objects but whose morphisms are stable maps. 
\begin{defn}[{\cite[Def. 2.6.3]{CW_book}}]
For any representation $\gamma \in RO(\Pi B)$, the \emph{$\gamma$-twisted stable fundamental groupoid of $B$}, denoted $\widehat{\Pi}_\gamma B$, is the following category: objects are the same as those of  $\Pi B$, and morphisms are given by the stable maps
\[\widehat{\Pi}_\gamma B (x,y) = [G_+\wedge_H S^{\gamma_0(x),x}, G_+\wedge_K S^{\gamma_0(y),y} ]_{B}^{G},\]
where, as before, $\gamma_0(x)$ is the fiber over the identity coset in the bundle $\gamma(x)$. 
When $\gamma$ is zero, we call the category $\widehat{\Pi}_0 B$  the \emph{stable fundamental groupoid} of $B$.
\end{defn}
\begin{rem}
Since
\[\widehat{\Pi}_0 B(x,y) = [G/H_+^x, G/K_+^y]^{G}_B\]
this directly generalizes the category of \cref{lem:staborb}.
\end{rem}

Comparison functors between the stable orbit category of $B$ and the $\gamma$-twisted stable fundamental groupoids will play an important role throughout the rest of the paper. These functors are implicit in the treatment in \cite{CW_book}.
\begin{defn}\label{def:gammagamma}
Let 
\[\Gamma_\gamma \colon \Bsorb{G}{B} \to \widehat{\Pi}_\gamma B \]
be the additive functor which is the identity on objects, and on morphisms sends a span $s = x \xleftarrow{q} z \xrightarrow{p}y$ to
\[\Gamma_\gamma(s)= \res_{\gamma}(p)\circ \tr_\gamma(q) .\]
\end{defn}

We then have the following crucial result, which generalizes \eqref{eq:staborb} and  \cref{lem:staborb}.

\begin{theorem}[{\cite[Theorem 2.6.4]{CW_book}}]\label{thm:gammagamma}
For any $\gamma\in \RO(\Pi B)$, the functor $\Gamma_\gamma$ is an isomorphism of categories
\[ \Gamma_\gamma \colon\xymatrix{ \Bsorb{G}{B} \ar[r]^-{\cong}  &\widehat{\Pi}_\gamma B}. \]
\end{theorem}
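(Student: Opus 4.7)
The functor $\Gamma_\gamma$ is the identity on objects and additive by construction, so the content of the theorem is that it induces bijections
\[\Bsorb{G}{B}(x,y) \to \widehat{\Pi}_\gamma B(x,y)\]
for each pair of objects $x\colon G/H \to B$ and $y\colon G/K \to B$. My plan is to first reduce to the untwisted case $\gamma = 0$ and then handle that case by an adjunction computation.

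For the reduction, the key point is that any virtual $\gamma \in \RO(\Pi B)$ determines an invertible parametrized sphere spectrum $S^\gamma \in \PSH{G}{B}$ with the property that $S^\gamma \wedge_B G/H_+^x \simeq G_+\wedge_H S^{\gamma_0(x),x}$ for every object $x$ of $\Pi B$. Since $\gamma$ is virtual, $S^\gamma$ is invertible in $\PSH{G}{B}$, so $(-)\wedge_B S^\gamma$ is a self-equivalence of the parametrized stable category. The definitions of $\res_\gamma(f)$ and $\tr_\gamma(q)$ in the preceding subsection are essentially engineered so that they become the images of $\res(f)$ and $\tr(q)$ under this equivalence. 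Hence $\Gamma_\gamma$ corresponds to $\Gamma_0$ under the induced bijections on morphism sets, and the general case follows from the case $\gamma = 0$.

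For $\gamma = 0$, the plan is to combine the base-change adjunction $(x_!, x^*)$ with the change-of-groups adjunction $G_+\wedge_H(-) \dashv i^*_H$. Writing $G/H_+^x = x_!(G_+\wedge_H S^0)$, these adjunctions give an isomorphism
\[[G/H_+^x, G/K_+^y]_B^G \cong [S^0, F]^H,\]
where $F$ denotes the fiber over $eH$ of $i^*_H x^* (G/K_+^y)$. A parametrized Beck--Chevalley argument identifies $i^*_H x^* y_!$ with the pushforward from a homotopy pullback in parametrized spaces, so the homotopy type of $F$ is that of a disjoint union of $H$-orbits $H/L$ indexed by equivalence classes of triples consisting of an $H$-map $\alpha\colon H/L \to i^*_H G/K$ together with a path-class in $i^*_HB$ from $x(eH)$ to $y(\alpha(eL))$. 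Applying the classical Segal--tom Dieck identification $[S^0, H/L_+]^H \cong \sorb{H}(H/H, H/L)$ summand-by-summand and reassembling matches $[G/H_+^x, G/K_+^y]_B^G$ with the free abelian group on equivalence classes of spans $x \leftarrow z \to y$ in $\Pi B$; to finish I would trace the definitions of $\res$ (via whiskering of paths) and $\tr$ (via the Pontryagin--Thom collapse) through the chain of adjunctions to confirm that the resulting bijection is exactly $\Gamma_0$.

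The main obstacle I anticipate is not any single step but the careful management of parametrized-homotopy-theoretic technicalities. Making the Beck--Chevalley identification valid at the derived level requires cofibrancy/well-sectioning hypotheses and a homotopical replacement of $x^*$; comparing the lax-map description of $\res_\gamma$ and $\tr_\gamma$ with the strict adjunction-level output demands that one carefully track the whiskering equivalences of \cref{rem:whisk} and the Moore-homotopy composition convention; and constructing the invertible sphere $S^\gamma$ with the correct action on parametrized orbits for an arbitrary virtual $\gamma$ — including representations with torsion as in \cref{lem:ROpiRP2} — requires a sufficiently robust theory of parametrized Thom spectra for $\RO(\Pi B)$-graded representations.
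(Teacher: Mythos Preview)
The paper does not supply its own proof of this statement; it is cited from \cite[Theorem~2.6.4]{CW_book}, and the remark immediately following the theorem summarizes the approach taken there: Costenoble--Waner identify the stable maps in $\widehat{\Pi}_\gamma B(x,y)$ directly as the free abelian group on equivalence classes of spans $x \leftarrow z \to y$ (with one leg a lax map over $B$), working with the $\gamma$-twist throughout rather than first reducing to $\gamma=0$.

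Your two-step strategy --- reduce to $\gamma=0$ via an invertible $S^\gamma$, then compute the untwisted case by adjunctions and Beck--Chevalley --- is a plausible alternative, and your sketch of the $\gamma=0$ step has essentially the right shape. But the reduction step is not a genuine simplification: constructing the invertible parametrized spectrum $S^\gamma$ for an arbitrary virtual $\gamma \in \RO(\Pi B)$, and then verifying that smashing with it carries $\res_0(f)$ and $\tr_0(q)$ exactly to $\res_\gamma(f)$ and $\tr_\gamma(q)$ as defined in \cref{cont:map}, requires the same unstable-to-stable bookkeeping (whiskering, lax maps, Pontryagin--Thom collapses, shearing) that a direct computation of $[G_+\wedge_H S^{\gamma_0(x),x}, G_+\wedge_K S^{\gamma_0(y),y}]_B^G$ would need. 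You correctly flag these as obstacles in your final paragraph; they are real, and they are why \cite{CW_book} absorbs the twist into the direct span identification rather than factoring it out first.
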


\begin{remark}  In \cite{CW_book}, the category $\Bsorb{G}{B}$ is not explicitly defined (although the category $\Bsorb{G}{B}$ does appear in \cite[p.\ 333]{CW_Duality}, where it is denoted $\hat{\pi} X$, replacing $B$ with $X$).  This is because \cite{CW_book} treats the more general setting of compact Lie groups, and in that setting, the composition of spans is harder to define.

Rather, in the statement of Theorem 2.6.4 in \cite{CW_book}, the authors identify the morphisms in $\widehat{\Pi}_\gamma B$ as equivalence classes of certain kinds of spans.  For finite groups, this description is easy to relate to the morphisms of $\Bsorb{G}{B}$.   Indeed, their spans have a lax map in the right leg of the spans, but a morphism $(z\colon G/L \to B) \to (y\colon G/K \to B)$ in $\Pi B$ gives rise to a lax map of orbits over $B$. Namely,  $\alpha\colon G/L \to G/K$ is the map over $B$ and the path $\omega$ witnesses the fact that $\alpha$ does not commute with the maps $z,y$ to $B$, but rather commutes with them up to homotopy.
    \end{remark}

We can now generalize the definition of Mackey functors to the parametrized context. These will be the coefficients used in parametrized equivariant cohomology.

\begin{defn}\label{defn:parmackey}
A \emph{parametrized $G$-Mackey functor over $B$} is an additive functor
\[\uM  \colon \pars{\Bsorb{G}{B}}^{\op} \to \mathrm{Ab}.\]
\end{defn} 
\begin{rem}
If $B=G/G$, then a parametrized Mackey functor over $B$ is the same as a Mackey functor.
\end{rem}

\begin{warn}
Costenoble--Waner typically call their coefficients \emph{$\widehat{\Pi}_0B$-modules} 
rather than
``parametrized $G$-Mackey functors over $B$'', and they consider both covariant and contravariant versions. The variance is not so important in the case of a finite group $G$.
\end{warn}

\begin{rem}
As in \cref{rem:mackeyfunctorburn}, a parametrized $G$-Mackey functor over $B$ can be described equivalently as an an additive functor
\[\uM\colon \pars{\BBurn{G}{B}}^\op \to \Ab\]
which takes disjoint unions to direct sums.
\end{rem}

From a parametrized Mackey functor $\uM$, we can extract two functors
\begin{align*}
 \uM^* \colon \Pi B & \to  \mathrm{Ab}  & \uM_* \colon \Pi B & \to  \mathrm{Ab} 
 \end{align*}
 where $ \uM^*$ is contravariant and $ \uM_*$ is covariant. The functor $\uM^*$ is obtained by restricting $\uM$ along the covariant embedding $\Pi B \to \Bsorb{G}{B}$, which is the identity on each object and sends each morphism $f$ to its restriction $\res(f)$. The functor $\uM_*$ is obtained by restricting $\uM$ along the contravariant embedding $\Pi B^{\op} \to \Bsorb{G}{B}$  which is the identity on objects and sends $f$ to $\tr(f)$.  These functors agree on objects
 \[ \uM^*(x) = \uM(x) =  \uM_*(x) \]
 and for a morphism $f \colon x \to y$ in $\Pi B$,
\begin{align*}
f^*:= \uM^* (f) &= \uM(\res(f))  &  f_*:=\uM_* (f) &= \uM(\tr(f)).
 \end{align*}

Suppose that $\uM$ is a parametrized Mackey functor over $B$ and $F \colon A \to B$ is a $G$-map. Since the construction of spans and Grothedieck completion are functorial, we can pull back $\uM$ along $F$ to  a parametrized Mackey functor over $A$ 
\[F^*\uM  = \uM \circ \Span^+(F).\]

We now examine how to restrict and induce Mackey functors for a subgroup $H\subseteq G$.
Recall from \cref{defn:cog-PiB} that, if $H\subset G$ is a subgroup, we get a functor
\begin{equation}\label{eq:inmack} G\times_H  -  \colon \Pi i^*_HB \to \Pi B.\end{equation}
Since $\Span^+(-)$ is a functorial construction we get the following induction on Mackey functors.

\begin{defn}\label{rem:cog-PigammaB}
Let
\[ G\times_H  -  \colon \Bsorb{H}{i^*_HB} \to \Bsorb{G}{B}\]
be the functor on spans induced by \eqref{eq:inmack}, and let
\[ G\times_H - \colon \widehat{\Pi}_{i^*_H\gamma} i^*_HB \to   \widehat{\Pi}_{\gamma} B\]
be equal to \eqref{eq:inmack} on objects and given by the induction functor for stable $G$-maps on morphisms.
\end{defn}

The following is straightforward to verify:
\begin{lem}
 For $H\subseteq G$ a subgroup, there is a commutative diagram
\[\xymatrix{
 \Bsorb{H}{i^*_HB} \ar[rr]^-{G\times_H -} \ar[d]_-{\Gamma_{i^*_H\gamma}}& &   \Bsorb{G}{B} \ar[d]^-{\Gamma_\gamma}\\
 \widehat{\Pi}_{i^*_H\gamma} i^*_HB  \ar[rr]^-{G\times_H -}  &  & \widehat{\Pi}_{\gamma} B.
}\]
\end{lem}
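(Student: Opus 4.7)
The plan is to verify the square commutes by unpacking it separately on objects and on morphisms, reducing the morphism check to compatibility of $G\times_H -$ with the restriction and transfer constructions of Section 2.6 individually. On objects, both composites send $b\colon H/K\to i^*_HB$ to $G\times_H b\colon G/K\to B$, because all four functors are the identity on underlying objects and $G\times_H-$ acts the same way in the span and stable pictures (see \cref{rem:cog-PigammaB}).

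By additivity, checking commutativity on morphisms reduces to a single span $s = x \xleftarrow{q} z \xrightarrow{p} y$ in $\Pi i^*_HB$. Unwinding \cref{def:gammagamma},
\[\Gamma_\gamma(G\times_H s) = \res_\gamma(G\times_H p)\circ \tr_\gamma(G\times_H q),\]
while $G\times_H \Gamma_{i^*_H\gamma}(s) = G\times_H\bigl(\res_{i^*_H\gamma}(p)\circ \tr_{i^*_H\gamma}(q)\bigr)$. Since induction of stable $G$-maps is a functor (it is a left adjoint, hence in particular preserves composition), the proof collapses to the two separate identities
\begin{align*}
G\times_H \res_{i^*_H\gamma}(p) &= \res_\gamma(G\times_H p),\\
G\times_H \tr_{i^*_H\gamma}(q) &= \tr_\gamma(G\times_H q).
\end{align*}

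For the restriction identity, I would walk through the construction \eqref{eq:resV}: $\res_{i^*_H\gamma}(p)$ is built by whiskering $H_+\wedge_L S^{i^*_H\gamma_0(x),x}$ and gluing the identity on $i^*_HB$, the Moore homotopy $\omega$ along the whiskers, and the bundle map $(i^*_H\gamma)(p)$ on spheres. Applying the left adjoint $G\times_H-$ commutes with the whiskering pushout, with the pushout gluing spheres to the base, and with fiberwise suspension. Combined with the equality $\gamma(G\times_H p) = G\times_H(i^*_H\gamma)(p)$ dictated by \cref{defn:cog-ROPiB}, this reproduces on the nose the data defining $\res_\gamma(G\times_H p)$. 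For the transfer identity, I would treat first the case where $q$ is a quotient and then reduce the general case to that one by factoring $q$ as an isomorphism followed by a quotient (as in \cite{CW_book}) and using that the transfer of an isomorphism is the restriction of its inverse, which is already handled. In the quotient case, the Pontryagin--Thom construction \eqref{eq:trnew} is governed by an $L$-equivariant embedding $K/L\subset W$ with its normal bundle and collapse map; applying $G\times_H-$ to this data yields the corresponding $G$-equivariant embedding, normal bundle, and collapse used to define $\tr_\gamma(G\times_H q)$, again because $G\times_H-$ preserves embeddings, tubular neighborhoods, one-point compactifications of finite-dimensional equivariant bundles, and the shearing isomorphism.

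The hard part is purely bookkeeping: tracking that each ingredient in the ex-space constructions (whiskering, pushouts along sections, fiberwise suspension, and the Pontryagin--Thom collapse together with the shearing) is preserved by the left adjoint $G\times_H-$, and that the identifications $\gamma_0(G\times_H b) = (i^*_H\gamma)_0(b)$ are the same ones used in both constructions of $\res$ and $\tr$. There is no new homotopical input beyond what is already assembled in Sections 2.5--2.6 of \cite{CW_book}, and the verification proceeds by inspection once the constructions are unwound.
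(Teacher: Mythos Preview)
Your proposal is correct and is exactly the natural unpacking of the verification; the paper itself gives no proof beyond the sentence ``The following is straightforward to verify,'' so your outline is more detailed than what appears in the paper rather than different from it.
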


\begin{defn}\label{defn:cog3} 
Let $B$ be a $G$-space and let $H\subset G$ be a subgroup.  Consider $\uM \colon  \pars{\Bsorb{G}{B}}^\op \to \Ab$ a parametrized $G$-Mackey functor over $B$. Applying the forgetful functor to $\uM$ defines a parametrized $H$-Mackey functor 
\[i^*_H\uM(-) = \uM( G\times_H-) \colon \pars{\Bsorb{H}{i^*_HB}}^\op \to \Ab.\]
\end{defn}

For $N$ an abelian group, we let $\uN$ denote the constant Mackey functor (over $B=G/G$). As a functor $\uN\colon (\Burn{G})^\op \to \Ab$, it assigns to a $G$-set $X$ the group
\[\uN(X) = \hom^G(X,N)\]
of $G$-maps (where $N$ has a trivial action). Restrictions are obtained by precomposition and transfers by summing over fibers. See for example \cite[\S3.2]{HHR}.
When restricted to orbits,  there is thus a canonical identification $\uN(G/H) = N$, and $\uN(\res(\alpha))=\id_N$ for every morphism $\alpha$ in $\cO_G$. If $\rho \colon G/H \to G/K$ is a quotient for $H\subset K$, then $\uN(\tr(\rho))$ is 
multiplication by the index $[K:H]$. Since any morphism in $\cO_G$ is a composition of quotients and isomorphisms and $\tr(\alpha)=\res(\alpha^{-1})$ for an isomorphism $\alpha$, this completely describes the Mackey functor $\uN$ on $\cO_G$.

\begin{defn}[Constant Mackey Functor]\label{def:constantMackey}
Let $N$ be an abelian group. Define the \emph{constant parametrized Mackey functor} over $B$ to be the pullback of the constant Mackey functor $\uN$ along the quotient map $\rho \colon B \to G/G$. We abuse notation and write
\[ \uN = \rho^*\uN \colon  \pars{\Bsorb{G}{B}}^{\op} \to \mathrm{Ab}. \]
\end{defn}

\begin{rem}
We write 
$ \uN$ for $F^*\uN$
for any map $F \colon A \to B$.  Similarly,  $i^*_H\underline{N}$ is again the constant parametrized Mackey functor, so we also write $\uN$ for $i^*_H\underline{N}$.
\end{rem}

The choice of notation in \cref{def:constantMackey} follows a common convention in equivariant homotopy theory, despite the risk that the constant Mackey functor $\underline{N}$ can easily be confused with an arbitrary Mackey functor.  The constant Mackey functors we work with will typically be $\uZ$ or $\mF$, as in the following examples.

\begin{example}\label{ex:lewisZ}
Using the skeleton of \cref{ex:funS11}, we can write a ``Lewis diagram'' for the constant parametrized Mackey functor $\uZ$ over the $C_2$-space $S^{1,1}$.  As usual, in the diagram we only depict the restrictions of the automorphisms $g$ and $t$.
 \[\xymatrix@C=1.5pc{
&  \uZ \colon \pars{\Bsorb{C_2}{S^{1,1}}}^\op \to \Ab& \\
\Z  \ar@/_/[dr]_-1 & & \Z  \ar@/_/[dl]_-1 \\
 & \Z \ar@/_/[ur]_-{2} \ar@/_/[ul]_-{2} \ar@(dr,d)@{>}[]^-{1}\ar@(d,dl)@{>}[]^-{1} & 
}\]
\end{example}

\begin{example}\label{ex:lewisF2}
Similarly, using the skeleton from \cref{ex:funS11} we can write a Lewis diagram for the constant parametrized Mackey functor $\mF$ over the $C_2$-space $S^{1,1}$.  Again, in the diagram we only depict the restrictions of the automorphisms.
 \[\xymatrix@C=1.5pc{
&  \mF \colon \pars{\Bsorb{C_2}{S^{1,1}}}^\op \to \Ab& \\
\F_2  \ar@/_/[dr]_-1 & & \F_2  \ar@/_/[dl]_-1 \\
 & \F_2 \ar@/_/[ur]_-{0} \ar@/_/[ul]_-{0} \ar@(dr,d)@{>}[]^-{1}\ar@(d,dl)@{>}[]^-{1} & 
}\]
\end{example}

\subsection{Stable homotopy groups}

Now that we have introduced all of the required ingredients, we can define parametrized stable homotopy groups as follows. These will be Mackey functors over $B$ and be graded on $\RO(\Pi B)$. 

\begin{defn}
For $\gamma\in \RO(\Pi B)$ and $X$ an ex-$G$-space over $B$, define a parametrized Mackey functor over $B$
\[\upi_\gamma X \colon \pars{\Bsorb{G}{B}}^{\op} \to \Ab \]
as follows. For each $x\colon G/H \to B$, we let
\[\upi_\gamma X (x) =[G_+\wedge_H S^{\gamma_0(x),x},X]_{B}^G. \]
Given a morphism $f\colon  x \to y$ in $\Bsorb{G}{B}$, we let
\[\upi_\gamma X (f) =-\circ \Gamma_\gamma(f) \colon \upi_\gamma X (y) \to \upi_\gamma X (x).\]
We call these the \emph{$\RO(\Pi B)$-graded stable homotopy parametrized Mackey functors of $X$}.
\end{defn}

This is a special case of homotopy groups of parametrized $G$-spectra, as defined in \cite[\S 3.7]{CW_book}.


\section{Parametrized cellular cohomology}\label{sec:coh}
In this section, we review the construction of cellular $\RO(\Pi B)$-graded parametrized cohomology following \cite{CW_book}.  We first give the definition of $\CWg$-complexes along with a few examples, then we review the construction of the cellular chains and define cohomology. We examine in detail the case when $G$ is trivial, and we explain in depth why the Costenoble--Waner theory in \cite{CW_book} corresponds to cellular cohomology with local coefficients.

Often, we will focus on $B_+ =B\sqcup s(B)$ as an ex-space over $B$. Costenoble--Waner explain in \cite[Remark 3.8.5]{CW_book} how one can recover other computations from this example using base-change functors.

\subsection{Cell structures}
Fix a representation $\gamma \in \RO(\Pi B)$.  In \cite[\S 3]{CW_book}, Costenoble--Waner introduce CW-structures twisted by the representation $\gamma$, both for $G$-spaces over $B$ and for ex-$G$-spaces over $B$.  These are referred to as $\CWg$-complexes and ex-$\CWg$-complexes, respectively.  We repeat the definitions here and illustrate some examples. 
 For any actual (as opposed to virtual) $G$-representation $V$, we let $D(V)$ be the unit disk in $V$ and $S(V)$ be the unit sphere in $V$.

First we introduce the parametrized cells, following \cite[\S 3.1]{CW_book}. We simply write $n$ to denote the $n$-dimensional trivial representation. We refer the reader to Definition 1.3.1 of \cite{CW_book} for the notion of stable equivalence of virtual representations. Note that virtual $G$-representations are stably equivalent if and only if they represent isomorphic virtual bundles over $G/G$ in $\vV_G$.

\begin{defn}
A \emph{$\gamma$-cell} is a space over $B$ of the form $(G\times_H D(V),p)$ for $V$ an actual orthogonal $H$-representation satisfying the following condition: restricting to the center of the cell $p_0 = p|_{G\times_H 0}\colon G\times_H 0 \to B$, there is an integer $n$ such that
$\gamma_0(p_0)+n$ is stably equivalent to $V$. Here $\gamma_0(p_0)$ is the (possibly virtual) representation that $\gamma$ assigns to the identity coset at the center of the cell. The \emph{dimension} of the cell is $\gamma+n$ and its \emph{boundary} is $(G\times_H S(V),p)$.
\end{defn}

\begin{rem}
    The stable equivalence condition implies the underlying dimension $|\gamma + n|$ of such a $\gamma$-cell will be the same as the underlying dimension of the representation $V$.    Note further that there is a strict restriction on the centers of $\gamma$-cells. Namely, $\gamma_0(p_0)$ must be stably equivalent to an \emph{actual} orthogonal representation of $H$.\footnote{This motivates the following language (see \cite[Definition 3.2.1]{CW_book}): A point $x \colon G/H \to B$ is called \emph{$\gamma$-admissible} if $\gamma_0(x)+n$ is stably equivalent to an actual orthogonal $H$-representation for some integer $n$.}
\end{rem}

Now that we have $\gamma$-cells, we turn to $\CWg$-complexes.  Typically, a CW-complex is defined inductively by attaching $k$-dimensional cells at the $k$th stage.  Thus, in order to define $\CWg$-complexes, we take into account the underlying dimension $|\gamma|$.  Rather than attaching $\pars{\gamma+k}$-cells at the $k$th stage, we attach $\pars{\gamma - |\gamma| + k}$-cells, whose underlying dimension is $k$.

\begin{defn}[{\cite[Definition 3.2.2]{CW_book}}]
Let $\gamma \in RO(\Pi B)$, and denote the dimension of the fibers of $\gamma$ by $|\gamma|$.
A \emph{$\CWg$-complex} is a space $(X,p)$ over $B$ presented as a colimit
\[ (X,p)= \colim_n (X^{n}, p)\]
of subspaces $(X^{n}, p)$ for $n\geq 0$ where
\begin{enumerate}
\item[(a)] $(X^{0}, p)$ is a union of $(\gamma-|\gamma|)$-cells. 
Moreover, these are unions of points $x\colon G/H \to X$ with the property that $\gamma_0(px)$ is a trivial representation of $H$. 
\item[(b)] $(X^{n},p)$ is constructed from $(X^{n-1},p)$ by attaching $(\gamma-|\gamma|+n)$-cells.
\end{enumerate}
If $(A,p)$ is a subspace of $(X,p)$, a \emph{relative} $\CWg$-structure on $(X,A,p)$  is obtained in the usual way by letting $X^{0}$ be the disjoint union of $A$ with $\pars{\gamma-|\gamma|}$-cells, and proceeding as above for attaching of higher dimensional cells.

An \emph{ex-$\CWg$-structure} on an ex-space $(X,p,s)$ is a relative structure for the pair $(X,s(B),p)$.
\end{defn}

\begin{rem}
To avoid ambiguity with respect to the representation $\gamma$, we let
\[X^{\gamma+n}:= X^{|\gamma|+n},\]
as in the notation of \cite{CW_book}.
\end{rem} 

\begin{rem}
The definition of a $\CWg$-complex already puts rather strong conditions on the possible value of $\gamma_0(p_0)$, since $\gamma_0(p_0) +n$ must be stably equivalent to an actual $H$-representation. 
Moreover, for any $0$-cell we require that $\gamma_0(p_0)$ must be stably trivial, which implies it is a trivial representation. \end{rem}

\begin{convention}
In this paper, we will always compute  with representations $\gamma$ of virtual dimension zero. With this choice, we have $X^{\gamma+n} = X^n$ and  $\pars{\gamma+n}$-cells have underlying dimension $n$.  Requiring $\gamma$ to have virtual dimension zero is not a strong condition since a $\text{CW}(\gamma-|\gamma|)$-complex is equivalent to the data of a $\CWg$-complex.
\end{convention}

\begin{ex}\label{ex:CWstructureG=e}
Let $G=e$ and let $(X,p)$ be a space over $B$.  Let $X$ be a CW-complex in the usual sense with skeleta $X^n$. Let $\gamma\in \RO(\Pi B)$ be 
of virtual dimension zero.  There is a $\CWg$-structure on $(X, p)$ with skeleta $(X^n,p)$ and cells $(D(\R^n),p\circ \varphi)$ where $\varphi \colon D(\R^n) \to X^{n}$ is the characteristic map for the CW-structure on $X$.  
Similarly, we can give $X_+ = (X \sqcup B, p\sqcup \id, \varnothing\sqcup \id)$ an ex-$\CWg$-structure by letting 
\[X^n_+ = X^n \sqcup B. \]
\end{ex}

\begin{ex}
    For $G$ a finite group,  a $\mathrm{CW}(0)$-complex on $B_+$ is equivalent to a $G$-CW structure on $B$, which has cells of the form $G \times_H D(\R^n)$ where $\R^n$ is the trivial $n$-dimensional $H$-representation. 
\end{ex}

\begin{ex}\label{ex:cellS11}
Consider $G=C_2$ and let $B=\mathbb{P}(\R^{2,1})\cong S^{1,1}$. Let $\taut$ be the tautological line bundle of $\mathbb{P}(\R^{2,1})$ from \cref{ex:tautS11}.  Recall the bundle gives a representation $\taut =(1,0,1) \in \RO(\Pi S^{1,1})$ via \cref{lem:ROpiS11}.  Fix a representation
\[\gamma=\taut-1 = (0,0,1) \in \Z^3 \cong \RO(\Pi S^{1,1})\] 
so $\gamma$ has virtual dimension zero.  Then $\gamma(b_0) = \R^{0,0}$ and $\gamma(b_1)= \R^{0,1}$.

We give $S^{1,1}$ 
the following $\CWg$-complex structure.  Begin with two $(\gamma+0)$-cells 
\begin{align*}
b&\colon C_2/e \to S^{1,1} & b_0 &\colon C_2/C_2 \to S^{1,1}
\end{align*}
as depicted in green and orange on the left in \cref{fig:Ssigmacell}.  Recall these were both points in the skeleton we chose for $\Pi S^{1,1}$ in \cref{ex:funS11}.

\begin{figure}[ht]
\begin{center}
\includegraphics[width=\textwidth]{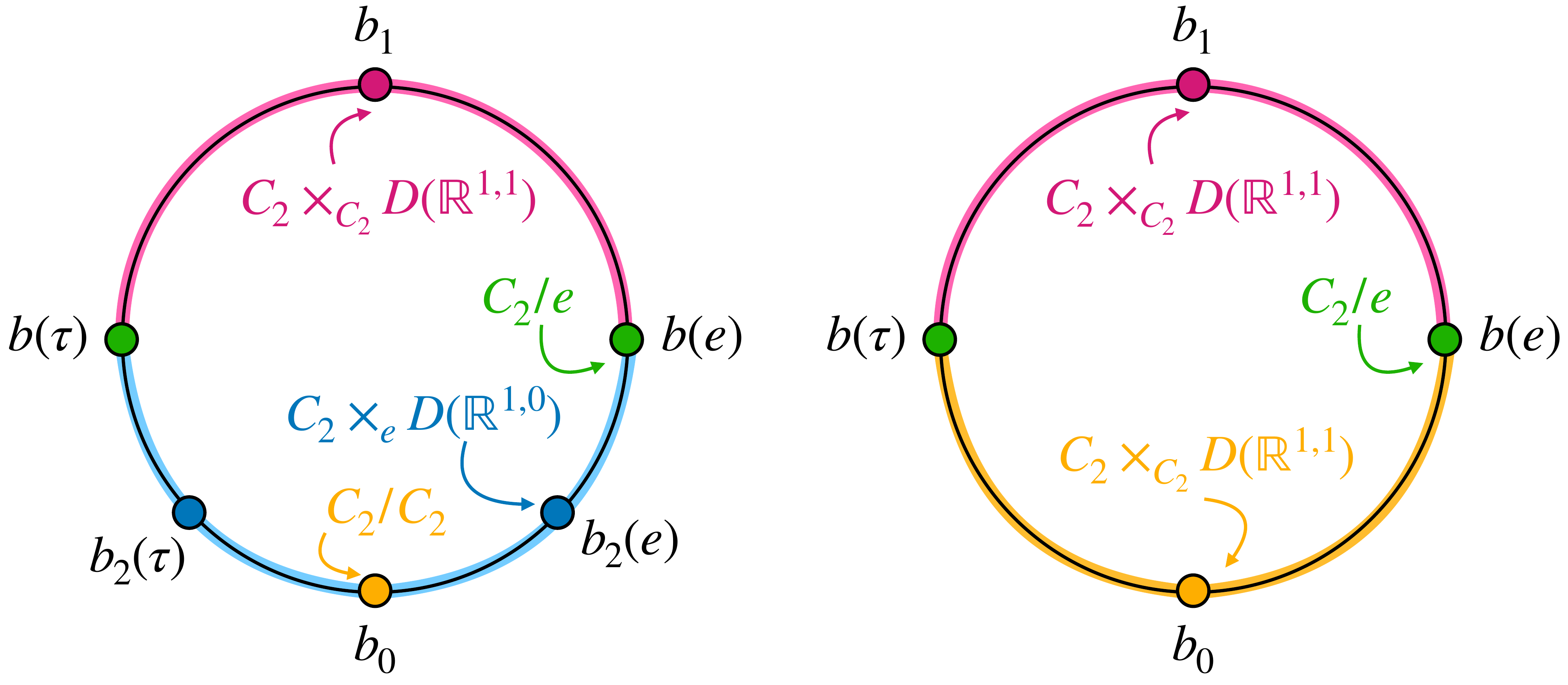}
\caption{Left: A $\CWg$ structure on $S^{1,1}$ for $\gamma= (0,0,1)$. Right: A $\CWg$ structure on $S^{1,1}$ for $\gamma= (0,1,1)$}
\label{fig:Ssigmacell}
\end{center}
\end{figure}

We will attach two $(\gamma+1)$-cells as well.  The first $(\gamma+1)$-cell is of the form $C_2\times_{C_2}D(\R^{1,1})\to S^{1,1} $, has center $b_1$, and has endpoints glued to $b$. 
For the second, it is helpful to name the point that will be the center of the cell, so we let $b_2\colon C_2/e \to S^{1,1}$ be the map such that $b_2(e) $ is the midpoint between $b(e)$ and $b_0$.
Then we attach a $(\gamma+1)$-cell $C_2 \times_e D(\R^{1}) \to S^{1,1}$ with center $b_2$, gluing the endpoints to $b_0$ and $b$ as shown on the left in \cref{fig:Ssigmacell}. These two $(\gamma+1)$-cells are shown in pink and blue, with their centers labeled. 

We can readily verify this describes a $\CWg$-complex.  The $(\gamma + 0)$-cells are objects of the skeleton to which $\gamma$ assigns trivial representations $\gamma(b) = C_2 \times \R^0$ and $\gamma(b_0) = \R^{0,0}$. The $(\gamma+1)$-cells have centers $b_1$ and $b_2$.  While $b_2$ is not in the skeleton, $b_2 \cong b$, so $\gamma$ assigns to the centers of the $(\gamma+1)$-cells $\gamma(b_1) = \R^{0,1}$ and $\gamma(b_2) = C_2 \times \R^0$. Now $\R^{0,1}+1 \cong \R^{1,1}$ and $\R^0 + 1 \cong \R^{1}$ as required.

Note that it would not be possible to give a $\CWg$-structure on $S^{1,1}$ with a $(\gamma+0)$-cell at $b_1$, because $\gamma(b_1)=\R^{0,1}$ is nontrivial.

One might be tempted to take a cell structure with only one $(\gamma+0)$-cell $b \colon C_2/e \to S^{1,1}$, and attach two $(\gamma+1)$-cells $C_2\times_{C_2}D(\R^{1,1}) \to S^{1,1} $ with centers $b_0$ and $b_1$. However, this is not a $\CWg$ structure for $\gamma= (0,0,1)$ since $\gamma_0(b_0)+1  \cong \R^{1,0}$ is not stably equivalent to $\R^{1,1}$. In fact, this describes a $\CWg$-structure for a different $\gamma$, namely $\gamma=(0,1,1) \in RO(\Pi S^{1,1})$. See \cref{fig:Ssigmacell} on the right.

Finally, it should be mentioned that there is a simpler $\CWg$-structure for our original choice of representation $\gamma=\taut-1=(0,0,1)$.  Take one $(\gamma+0)$-cell $b_0\colon C_2/C_2 \rightarrow S^{1,1}$ and attach one $(\gamma+1)$-cell $C_2\times_{C_2}D(\R^{1,1})\rightarrow S^{1,1}$ with center $b_1$.  We use the more complicated structure here, and again later in \cref{comp:S11taut}, to better demonstrate the details of the constructions. 
\end{ex}

 See \cite[Example 3.1.2]{CW_book} for more examples of $\CWg$-structures.

\begin{warn}\label{warn:CWfail}
It is not always possible to give a $G$-space $X$ over $B$ a $\CWg$-complex structure. 
One may be able to replace $X$ with a weakly equivalent space on which there is a $\CWg$-structure.\footnote{Chapter 3 of \cite{CW_book} contains $\CWg$-approximation results. See \cite[Theorems 3.2.12 and 3.2.13]{CW_book}. However, the replacements are only up to a notion called ``$\text{weak}_{\gamma}$-equivalence''. This class of morphisms is larger than that of weak equivalences. For example, $G/G$ does not admit a $\text{CW}(\gamma)$-structure for $\gamma = \R^{-1,-1}$ up to weak equivalence, only up to $\text{weak}_{\gamma}$-equivalence.}

For example, let $X=B=G/G$ and take $V$ a nontrivial actual $G$-representation. Let $\gamma=V-|V|$, so $\gamma$ has virtual dimension zero.\footnote{This is simply to follow our convention.  One could also take $\gamma = V$.} It is not possible to form a $(\gamma+0)$-cell using a representation that is both trivial and stably equivalent to $V-|V|$. So, in general, there is no $\CWg$-structure on $G/G$ as a space over itself! 

However, we can replace $G/G$ with the equivariantly contractible $D(V)$ and give a $\CWg$-structure to $D(V)$. We will see that such a cell structure will compute the Bredon cohomology of a point in degrees $V-|V|+*$, and so it is reasonable that giving a cell structure to $D(V)$ should be about as difficult as giving a $G$-CW structure to $S^V$. 

In particular, let $G=C_2$, $V=\R^{1,1}$, and consider $D(\R^{1,1})$.  As above, take $\gamma = V - |V|$.  We can give $D(\R^{1,1})$ one $(\gamma+0)$-cell, the inclusion $ C_2/e\cong S(\R^{1,1}) \to D(\R^{1,1})$. Now attach a single $(\gamma+1)$-cell of the form $C_2\times_{C_2} D(\R^{1,1})  =D(\R^{1,1}) \to D(\R^{1,1})$ attached along the identity map.
\end{warn}

\subsection{Cohomology}

In \cite[\S 3.3]{CW_book}, Costenoble--Waner define $RO(\Pi B)$-graded cohomology theories as an analogue of cellular cohomology. We review this here.

Fix a $\CWg$-structure on an ex-$G$-space $(X,p,s)$ over $B$. We assume that $X$ has finite type in the sense that it has finitely many $\gamma+n$ cells for each $n$. The finite type assumption is not necessary for the theory but will be the case in all our examples and it makes the exposition  easier.

In order to form a cellular chain complex, we need to understand the filtration quotients of such an ex-$\CWg$-complex. These filtration quotients $X^{\gamma+n}/_BX^{\gamma+n-1}$ are (laxly) homotopy equivalent to a wedge of parametrized spheres, as defined in \cref{defn:para-rep-sphere} (see \cite[Lemma 3.3.2]{CW_book} and also \cref{lem:onlycenters} and \cref{cor:onlycentersstable} below).  In each degree, the cellular chain complex given in \cite[Definition 3.3.3]{CW_book} uses stable maps of parametrized spheres to these filtration quotients.  We rephrase the definition here, making explicit the role of the isomorphism $\Gamma_\gamma \colon {\Bsorb{G}{B}} \xrightarrow{\cong} \widehat{\Pi}_\gamma B $ from \cref{thm:gammagamma}.  

\begin{defn}
For $X$ an ex-$\CWg$-complex, the \emph{cellular chains on $X$} are defined by the following chain complex of parametrized Mackey functors, 
\[\uC_{\gamma+*}(X) \colon \pars{\Bsorb{G}{B}}^\op\to \mathrm{Ab}.\]
\begin{enumerate}[(a)]
\item For $b$ an object of ${\Bsorb{G}{B}}$, 
\[\uC_{\gamma+n}(X)(b) :=[G_+\wedge_H S^{\gamma_0(b)+n,b}, X^{\gamma+n}/_BX^{\gamma+n-1}]_{B}^{G} . \] 
\item For $f \colon b \to c$ a morphism of ${\Bsorb{G}{B}}$,
\[\uC_{\gamma+n}(X)(f) = - \circ \Gamma_\gamma(f) \colon  \uC_{\gamma+n}(X)(c) \to \uC_{\gamma+n}(X)(b).\]
\item The boundary
\[d_{\gamma+n} \colon \uC_{\gamma+n}(X)  \to \uC_{\gamma+n-1}(X)  \]
is the natural transformation induced by post-composition with the map
\[\delta_{\gamma+n} \colon X^{\gamma+n}/_BX^{\gamma+n-1} \to \Sigma_B X^{\gamma+n-1}/_BX^{\gamma+n-2}. \]
\end{enumerate}
\end{defn}

The key observation for computation will be that, up to the ``twist'' by $\Gamma_\gamma$, each $\uC_{n+\gamma}(X)$ is a direct sum of representable functors. To see this, we first need a lemma describing the filtration quotients.

\begin{lem}[{\cite[Lemma 3.3.2]{CW_book}}]\label{lem:onlycenters}
There is a lax equivalence over $B$ given by
\[X^{\gamma+n}/_B X^{\gamma+n-1} \simeq_{\mathrm{lax}} \bigvee_{x} G_+ \wedge_H S^{V_x,px}  \]
where the wedge runs over the centers $x \colon G/H \to X$ of the $(\gamma+n)$-cells $G\times_H D(V_x)$ in the ex-$\CWg$-complex structure for $X$.
\end{lem}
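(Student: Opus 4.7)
The plan is to first split the quotient into a parametrized wedge of cell quotients, and then to compare each summand with a parametrized representation sphere up to lax equivalence.

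For the decomposition step, by the definition of a relative $\CWg$-structure, $X^{\gamma+n}$ is presented as a pushout of the attaching maps $\alpha_x \colon G\times_H S(V_x) \to X^{\gamma+n-1}$ against the boundary inclusions into $G\times_H D(V_x)$, indexed by the centers $x$ of the $(\gamma+n)$-cells. Since the parametrized quotient $(-)/_B X^{\gamma+n-1}$ is itself defined by a pushout and is a left adjoint, it commutes with pushouts and with coproducts, so
\[ X^{\gamma+n}/_B X^{\gamma+n-1} \cong \bigvee_{B,\, x} \bigl((G\times_H D(V_x))/_B(G\times_H S(V_x))\bigr),\]
where the projection on each summand is induced by $p\varphi_x$ on the interior and by $p\alpha_x$ on the collapsed boundary.

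For the identification step, I would construct for each center $x\colon G/H \to X^{\gamma+n}$ a lax equivalence
\[ \psi_x \colon (G\times_H D(V_x))/_B(G\times_H S(V_x)) \to G_+\wedge_H S^{V_x, px}.\]
On underlying spaces, $\psi_x$ is the standard quotient homeomorphism $G\times_H D(V_x)/(G\times_H S(V_x)) \cong G\times_H S^{V_x}$ combined with the identity on $B$; in the target, the $\infty$-section of the sphere is identified with the single point $px\in B$, while in the source the boundary is identified via $p\alpha_x$ with a (generally larger) subset of $B$. The map $\psi_x$ is strictly $G$-equivariant, commutes strictly with sections, and is a homeomorphism on total spaces, but the two projections to $B$ do not agree strictly on the cell interior. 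The witnessing (Moore) homotopy rel $s(B)$ is the straight-line contraction $(r, [g,v])\mapsto p\varphi_x[g,(1-r)v]$, which terminates at the constant map $c_{px}$ because $\varphi_x[g,0] = x(gH)$ so $p\varphi_x[g,0] = px$. This realizes $\psi_x$ as an honest lax map and a weak equivalence of total spaces, hence a lax equivalence. Taking the parametrized wedge of the $\psi_x$ delivers the claimed lax equivalence.

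The main technical obstacle is upgrading the naive fiberwise identification $D(V_x)/S(V_x)\cong S^{V_x}$ to a genuine map of ex-spaces satisfying the definition of a lax map in \cite{CW_book}: the attaching map $p\alpha_x$ can spread the boundary over many points of $B$, whereas the target pins the sphere's infinity to the single point $px$, and this discrepancy must be absorbed into a well-behaved homotopy rel section. This is handled by passing to the whiskered cofibrant replacements of \cref{rem:whisk}, where the whisker $G\times_H [0,1]$ provides the room in which the Moore homotopy lives; with the finite-type assumption the wedge of lax equivalences is then itself a lax equivalence.
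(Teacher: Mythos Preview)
Your decomposition step is fine and matches the paper's reduction to a single cell. The gap is in the construction of $\psi_x$. You want $\psi_x$ to be the identity on $B$ and the boundary-collapse $G\times_H D(V_x) \to G\times_H S^{V_x}$ on the cell, but these two prescriptions disagree on $G\times_H S(V_x)$: in the source this boundary is identified with $p\alpha_x(G\times_H S(V_x)) \subset B$, which the identity sends to itself, whereas the collapse sends it to $G\times_H\{\infty\}$, which in the target is identified with $px(G/H)$. Unless $p\alpha_x$ happens to be constant at $px$, no continuous map on the pushout satisfies both conditions, and in particular $\psi_x$ is not a homeomorphism of total spaces. Your Moore homotopy addresses only the compatibility of $p_Y\circ f$ with $p_X$; it cannot repair an ill-defined $f$, because the lax condition still requires an honest continuous map of underlying spaces.

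The paper resolves this by building $h_{\gamma+n}$ from a pinch $D(V) \to D(V)\vee S^V$ (collapsing the radius-one-half sphere), followed by $p\varphi$ on the outer $D(V)$ and the identity on $S^V$. This \emph{is} well-defined on the pushout, since on the boundary $S(V)$ it applies $p\varphi\vert_{S(V)} = p\alpha_x$, matching the gluing in the source. The price is that $h_{\gamma+n}$ is no longer a bijection, so one must exhibit an explicit lax inverse $k_{\gamma+n}$ (sending the lower hemisphere of $S^V$ into $B$ via $p\varphi$). Your instinct that one needs ``room'' to absorb the image of the attaching map is correct, but that room must be built into the map itself via the pinch, not postponed to the lax homotopy.
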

We illustrate some of the details in the proof of \cref{lem:onlycenters} in order to help the reader visualize the maps.
See \cref{fig:laxequiv} for a figure showing the maps in the equivalence for a single cell.
\begin{figure}[ht]
\begin{center}
\includegraphics[width=\textwidth]{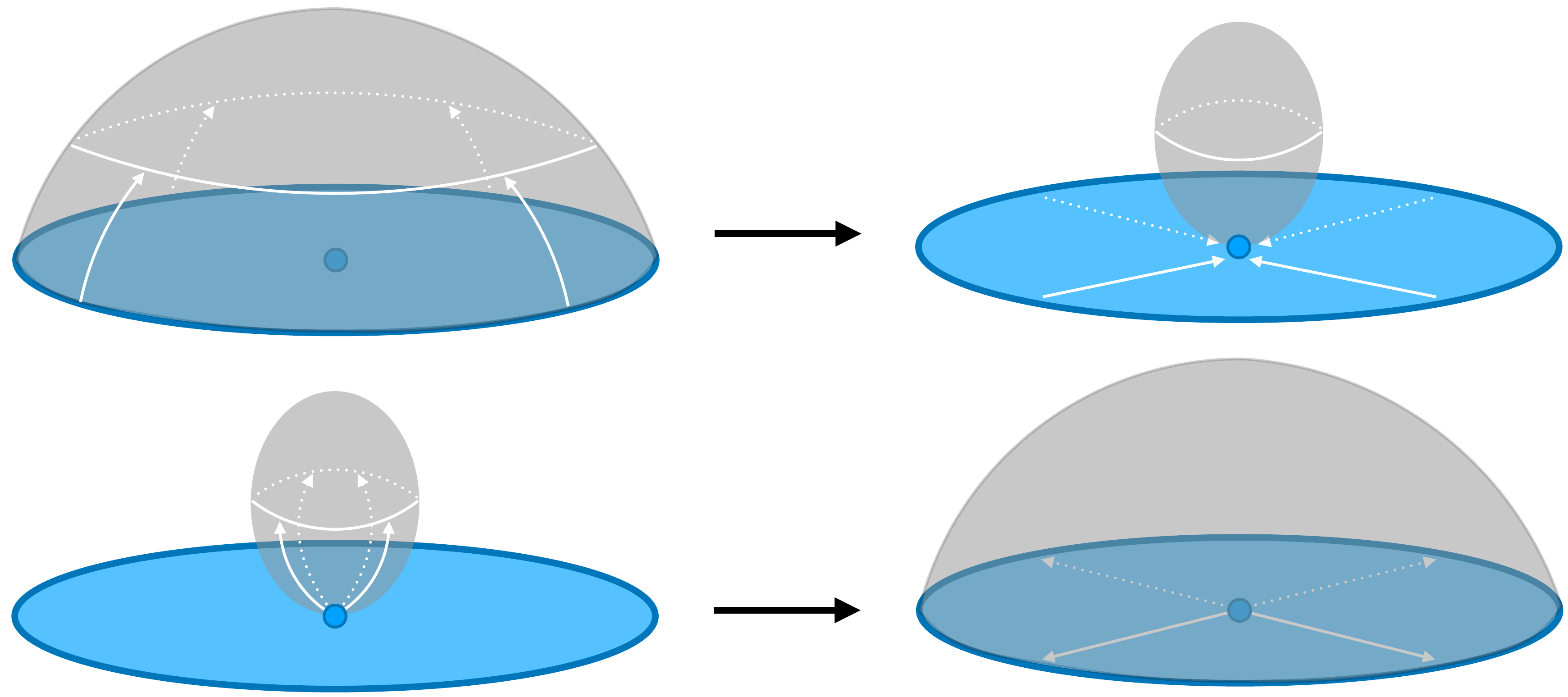}
\caption{The maps  \eqref{eq:h}  (top) and  \eqref{eq:k} (bottom).}
\label{fig:laxequiv}
\end{center}
\end{figure}

\begin{proof}
We explain the idea when there is a single $(\gamma+n)$-cell $G\times_H D(V)$ with center $x$. The generalization to multiple cells follows from applying the process below to all cells at once.

As in the proof in \cite{CW_book}, note that $X^{\gamma+n}/_B X^{\gamma+n-1}$ is given by a copy of $B$ (the image of the section) with the $(\gamma+n)$-cell $G\times_H D(V)$ glued along its boundary $G\times_{H} S(V)$ via the composite $p\partial\varphi$, where \[\partial\varphi \colon G\times_{H} S(V) \to X^{\gamma+n-1}\] is the attaching map for the cell.

We have an $H$-map 
\[
\xymatrix@C=3pc{D(V) \ar[r]^-{q} &  D(V) \vee S^V \ar[r]^-{ p\varphi\ \vee \ \id \ }  & S^{V,x}}
\]
where the first map is the quotient of $D(V)$ by the sphere of radius one half in $D(V)$, and the second map is given by $p\varphi$ on $D(V)$ and the identity on $S^V$.
The composite glues with the identity on $B$ to induce a map
\begin{equation}\label{eq:h}
 X^{\gamma+n}/_BX^{\gamma+n-1} \xrightarrow{h_{\gamma+n}}  G_+\wedge_H S^{V,x}. \end{equation}
The map $h_{\gamma+n}$ is a lax homotopy equivalence over $B$. (It is lax because it does not strictly commute with the projection to $B$.) The inverse is obtained as follows. Write
\[S^V = D(V)^+ \cup_{S(V)} D(V)^-\]
where $D(V)^+$ is the upper hemisphere (as shown in \cref{fig:laxequiv}) corresponding to the unit disk in $V$, and $D(V)^-$ is the lower hemisphere corresponding to vectors of length greater than one (glued to $B$ at infinity). 
Then consider the $H$-map
\[
S^V \longrightarrow D(V) \cup_{p\partial\varphi} B \subseteq i^*_H\pars{X^{\gamma+n}/_BX^{\gamma+n-1}}
\]
 which is the identity on $ D(V)^+$ and $p\varphi$ on $D(V)^-$. We can use the map above to get a map from the induced sphere $G \times_H S^V$ by adjunction, and then extend to the parametrized sphere (via the pushout with $\id$ on $B$).  This gives the inverse equivalence
\begin{align}\label{eq:k}  G_+\wedge_H S^{V,x} \xrightarrow{k_{\gamma+n}} X^{\gamma+n}/_BX^{\gamma+n-1},
\end{align}
proving the claim for one cell.

More generally, given multiple cells, we apply the process described above to all cells at once to construct lax maps
\[\xymatrix@C=3pc{ X^{\gamma+n}/_B X^{\gamma+n-1}  \ar[r]^-{h_{\gamma+n}} & \bigvee_{x} G_+ \wedge_H S^{V_x,x}  \ar[r]^-{k_{\gamma+n}} &  X^{\gamma+n}/_B X^{\gamma+n-1} }\]
giving the desired lax equivalence.
\end{proof}

\begin{cor}\label{cor:onlycentersstable}
There is a stable equivalence 
\[X^{\gamma+n}/_B X^{\gamma+n-1} \simeq \bigvee_{x} G_+ \wedge_H S^{\gamma_0(px)+n,px}  \]
as spectra over $B$, where the wedge runs over the centers $x \colon G/H \to X$ of the $(\gamma+n)$-cells in the ex-$\CWg$-complex structure for $X$.
\end{cor}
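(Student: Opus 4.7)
The plan is to deduce this from Lemma \ref{lem:onlycenters} in two steps: first, upgrade the unstable lax equivalence to a stable equivalence in $\PSH{G}{B}$, and second, replace each $V_x$ with $\gamma_0(px) + n$ using the stable equivalence built into the definition of a $(\gamma+n)$-cell.

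First, Lemma \ref{lem:onlycenters} produces a lax homotopy equivalence of ex-$G$-spaces over $B$,
\[X^{\gamma+n}/_B X^{\gamma+n-1} \simeq_{\mathrm{lax}} \bigvee_{x} G_+ \wedge_H S^{V_x,px},\]
where the wedge is indexed on the centers $x\colon G/H \to X$ of the $(\gamma+n)$-cells $G\times_H D(V_x)$. Working with the whiskered replacements of Remark \ref{rem:whisk}, the concrete maps $h_{\gamma+n}$ and $k_{\gamma+n}$ constructed in the proof of Lemma \ref{lem:onlycenters} can be arranged to be genuine maps of ex-spaces which are weak equivalences on total spaces. Applying the parametrized suspension spectrum functor $\Sigma^\infty_B$ then yields a stable equivalence in $\PSH{G}{B}$.

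Second, for each center $x\colon G/H \to X$ of a $(\gamma+n)$-cell, the definition of such a cell ensures that $V_x$ is stably equivalent to $\gamma_0(px) + n$ as virtual $H$-representations. Thus there exists an actual $H$-representation $Z$ together with an isomorphism $V_x \oplus Z \cong (\gamma_0(px) + n) \oplus Z$, which induces an isomorphism
\[\Sigma^Z_B\pars{G_+\wedge_H S^{V_x, px}} \cong \Sigma^Z_B \pars{G_+\wedge_H S^{\gamma_0(px)+n, px}}\]
and hence a stable equivalence $G_+\wedge_H S^{V_x, px} \simeq G_+\wedge_H S^{\gamma_0(px)+n, px}$ in $\PSH{G}{B}$. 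Taking the wedge over all centers and composing with the equivalence from the first step gives the desired stable equivalence.

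The main obstacle is the first step: verifying rigorously that the lax equivalence of Lemma \ref{lem:onlycenters} upgrades to a stable equivalence in $\PSH{G}{B}$ requires unpacking how lax maps relate to strict maps after fiberwise suspension, a technical point built into the framework of \cite{CW_book} and \cite{MaySig} via the whiskering functor. The second step, while notationally heavy, is essentially formal: stable equivalences of $H$-representations transfer directly to stable equivalences of the associated parametrized representation spheres.
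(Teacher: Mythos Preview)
Your proposal is correct and follows essentially the same two-step approach as the paper: first pass from the lax equivalence of Lemma~\ref{lem:onlycenters} to a stable equivalence, then replace each $V_x$ by $\gamma_0(px)+n$ using the stable equivalence built into the definition of a $(\gamma+n)$-cell. The only difference is that where you gesture at whiskering and acknowledge the lax-to-strict passage as a technical black box, the paper simply cites Proposition~2.5.13 and Lemma~2.5.14(b) of \cite{CW_book}, which state precisely that lax homotopy equivalences yield stable equivalences via a zig-zag; your claim that $h_{\gamma+n}$ and $k_{\gamma+n}$ can be ``arranged to be genuine maps of ex-spaces'' is not quite right on its own, but the references you point to do supply the needed argument.
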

\begin{proof}
By \cref{lem:onlycenters},
\[ X^{\gamma+n}/_BX^{\gamma+n-1}  \simeq_{\lax}  \bigvee_x G_+\wedge_H S^{V_x,x} \]
as spaces over $B$ where $x \colon G/H \to B$ is the center of the cell $G\times_H V_x$.
The claim then follows from the fact that $V_x$ is stably equivalent to $\gamma_0(x) +n$, and the fact that lax homotopy equivalences give rise to stable equivalences via a zig-zag of equivalences, see Proposition 2.5.13 and Lemma 2.5.14 (b) of \cite{CW_book}.
\end{proof}

\begin{rem}\label{rem:laxequiv}
    Going forward, we will often abuse notation and simply write $\gamma_0(x)+n$ instead of $V_x$  since only the stable type of $V_x$ will matter for computing stable maps.  We will write
\[\xymatrix@C=3pc{ X^{\gamma+n}/_B X^{\gamma+n-1}  \ar[r]^-{h_{\gamma+n}} & \bigvee_{x} G_+ \wedge_H S^{\gamma_0(px)+n,x}  \ar[r]^-{k_{\gamma+n}} &  X^{\gamma+n}/_B X^{\gamma+n-1} }\] 
when referring to the maps described in the proof of \cref{lem:onlycenters}.
\end{rem}

\begin{rem}
It follow from \cref{cor:onlycentersstable} that the boundary map $\delta_{\gamma+n}$ on the filtration quotient that induces the differential in the cellular chain complex can be described by a map 
\[ \bigvee_{x,y} \delta_{\gamma+n}^{x,y},\] 
where $x$ runs over the centers of the $(\gamma+n)$-cells, $y$ over the centers of the $(\gamma+n-1)$-cells, and 
 $ \delta_{\gamma+n}^{x,y}$ is a stable map 
\[ \xymatrix{
G_+\wedge_H S^{\gamma_0(x)+n,x} \ar[r]^-{\delta_{\gamma+n}^{x,y}} &   G_+\wedge_H S^{\gamma_0(y)+n,y}  
}. \]
\end{rem}

We return to the cellular chains parametrized Mackey functor.
\begin{cor}
The functor $\uC_{\gamma+n}(X)$ is a direct sum 
\[ \uC_{\gamma+n}(X)(-) 
 \cong  \bigoplus_x {\Bsorb{G}{B}}\pars{-,\Gamma_\gamma^{-1}(px)}  \]
and under this isomorphism, 
\[d_{\gamma+n}= \bigoplus_{x,y} \Gamma_\gamma^{-1}\pars{\delta_{\gamma+n}^{x,y}} \circ - \] for an element 
\[ \pars{\delta_{\gamma+n}^{x,y}} \in \bigoplus_{x,y} \widehat{\Pi}_\gamma B(px,py) .\]
\end{cor}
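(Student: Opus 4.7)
The plan is to combine \cref{cor:onlycentersstable} with the equivalence of categories $\Gamma_\gamma$ from \cref{thm:gammagamma} and then invoke Yoneda. First, I would apply the functor $[G_+\wedge_H S^{\gamma_0(b)+n,b},-]_B^G$ to the stable equivalence
\[
X^{\gamma+n}/_B X^{\gamma+n-1} \;\simeq\; \bigvee_{x} G_+\wedge_H S^{\gamma_0(px)+n,px}
\]
provided by \cref{cor:onlycentersstable}. Since stable maps out of a compact object convert finite wedges into direct sums (and $X$ has finite type by assumption), this gives a natural isomorphism
\[
\uC_{\gamma+n}(X)(b)\;\cong\;\bigoplus_{x}\widehat{\Pi}_\gamma B(b,\,px),
\]
with the sum indexed by centers $x$ of the $(\gamma+n)$-cells. \cref{thm:gammagamma} identifies each summand with $\Bsorb{G}{B}(b,\Gamma_\gamma^{-1}(px))$, giving the claimed decomposition.

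Next I would verify naturality in $b$. For a morphism $f\colon b\to c$ in $\Bsorb{G}{B}$, the definition of $\uC_{\gamma+n}(X)(f)$ is precomposition by $\Gamma_\gamma(f)$ on stable maps. Under the identification of hom-sets supplied by $\Gamma_\gamma$, precomposition by $\Gamma_\gamma(f)$ in $\widehat{\Pi}_\gamma B$ corresponds exactly to precomposition by $f$ in $\Bsorb{G}{B}$, i.e.\ the Yoneda action on the representable $\Bsorb{G}{B}(-,\Gamma_\gamma^{-1}(px))$. This makes the isomorphism a natural isomorphism of functors $(\Bsorb{G}{B})^\op\to\Ab$.

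For the differential, the two relevant filtration quotients both split stably as wedges by \cref{cor:onlycentersstable}. Precomposing the map $\delta_{\gamma+n}$ with the inclusion of the $x$-summand of the source and projecting to the $y$-summand of the target yields a well-defined stable map
\[
\delta_{\gamma+n}^{x,y}\in\widehat{\Pi}_\gamma B(px,\,py).
\]
By construction, $\delta_{\gamma+n}$ is the wedge of these components up to the lax equivalences $h_{\gamma+n}$ and $k_{\gamma+n-1}$ of \cref{lem:onlycenters}. The boundary $d_{\gamma+n}$ is post-composition with $\delta_{\gamma+n}$, so after transporting along $\Gamma_\gamma$ it becomes post-composition with the tuple $\bigl(\Gamma_\gamma^{-1}(\delta_{\gamma+n}^{x,y})\bigr)$ in $\bigoplus_{x,y}\Bsorb{G}{B}(\Gamma_\gamma^{-1}(px),\Gamma_\gamma^{-1}(py))$, as claimed.

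The only real obstacle is bookkeeping: one must check that the same functor $\Gamma_\gamma$ is being used to identify both the source and target of $d_{\gamma+n}$, so that conjugation by $\Gamma_\gamma^{-1}$ genuinely produces a natural transformation of representable functors on $\Bsorb{G}{B}$. Since $\Gamma_\gamma$ is an isomorphism of categories (not merely an equivalence) by \cref{thm:gammagamma}, this conjugation is well-defined and strictly functorial, and no coherence subtleties arise.
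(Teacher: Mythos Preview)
Your proof is correct and follows essentially the same approach as the paper: apply \cref{cor:onlycentersstable} to split the filtration quotient as a wedge, convert to a direct sum of $\widehat{\Pi}_\gamma B$-homs, and then transport along the isomorphism $\Gamma_\gamma$ from \cref{thm:gammagamma}. The paper's own proof is more terse (it simply writes down the resulting direct sum and says ``apply the Yoneda lemma, taking into account the isomorphism $\Gamma_\gamma$''), whereas you spell out the naturality check and the component-wise description of $d_{\gamma+n}$; but the underlying argument is the same.
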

\begin{proof}
We have
\begin{align*}
\uC_{\gamma+n}(X)(b) &=\Big[G_+ \wedge_K S^{\gamma_0(b)+n,b} ,\bigvee_x G_+ \wedge_H S^{\gamma_0(px)+n,px} \Big]_{B}^{G} \\
&\cong \bigoplus_{x}\widehat{\Pi}_\gamma B(b,px),
\end{align*}
where again the wedge and sum run over the centers of the cells. 
Now apply the Yoneda lemma, taking into account the isomorphism $\Gamma_\gamma$.
\end{proof}

As usual, we apply $\Hom$ to get cellular cochains.  This will give us a chain complex of abelian groups.\footnote{As described in \cite{CostenobleB}, it is possible to extend the definition of cohomology to be valued in $G$-Mackey functors using the induction and restriction adjunctions.}

\begin{defn}
Let $X$ be an ex-$\CWg$-complex and $\uM$ be a parametrized Mackey functor over $B$. The \emph{cellular cochains on $X$ with coefficients in $\uM$} is the chain complex given by 
\[C^{\gamma + *}(X;\uM) \cong \mathrm{Nat}( \uC_{\gamma + *}(X), \uM),\]
where the right-hand side denotes the abelian group of natural transformations from $ \uC_{\gamma + *}(X)$ to $\uM$. The coboundary $d^{\gamma+n}$ is obtained by precomposition with $d_{\gamma+n}$. The  \emph{parametrized cellular cohomology of $X$ with coefficients in $\uM$} is defined by 
\[\wH^{\gamma+*}(X;\uM) := H^*(C^{\gamma + *}(X;\uM), d^{\gamma+*} ).\]
When $X=B_+= B\sqcup s(B)$, we write
\[H^{\gamma+*}(B;\uM) := \wH^{\gamma+*}(X; \uM ).\]
\end{defn}

We get the following result from the Yoneda lemma.
\begin{cor}
\label{cor:yoneda} 
There is an isomorphism of cochain complexes:
\[\xymatrix{\cdots \ar[r] & C^{\gamma+n-1}(X ; \uM) \ar[rrr]^-{ d^{\gamma+n} } \ar[d]^-{\cong}&& &C^{\gamma+n}(X ;\uM) \ar[d]^\cong \ar[r] & \cdots \\
\cdots \ar[r] &\bigoplus_y \uM(py)\ar[rrr]^-{\bigoplus_{x,y}  \uM(\Gamma_\gamma^{-1}(\delta_{\gamma+n}^{x,y}))} &&  & \bigoplus_x \uM(px)\ar[r] & \cdots
} \]
\end{cor}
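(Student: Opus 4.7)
The plan is to apply the Yoneda lemma componentwise to the description of the chain functor from the preceding corollary. That result gives a natural isomorphism
\[\uC_{\gamma+n}(X)(-) \cong \bigoplus_{x}\Bsorb{G}{B}\pars{-,\Gamma_\gamma^{-1}(px)},\]
with the sum indexed over the centers $x$ of the $(\gamma+n)$-cells. First I would compute
\[C^{\gamma+n}(X;\uM) = \Nat\pars{\uC_{\gamma+n}(X),\uM} \cong \prod_{x}\Nat\pars{\Bsorb{G}{B}(-,\Gamma_\gamma^{-1}(px)),\uM},\]
noting that the finite-type hypothesis on $X$ turns this product into a finite direct sum in each degree. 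The additive Yoneda lemma, applied to the $\Ab$-enriched category $\Bsorb{G}{B}$, then identifies each factor with $\uM(\Gamma_\gamma^{-1}(px)) = \uM(px)$, where the second equality uses that $\Gamma_\gamma$ is the identity on objects (see the definition of $\Gamma_\gamma$). This produces the vertical isomorphisms in the display.

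For the horizontal squares, I would trace the Yoneda bijection explicitly. Under the isomorphism $\uM(b) \cong \Nat\pars{\Bsorb{G}{B}(-,b),\uM}$, an element $m \in \uM(b)$ corresponds to the natural transformation $\eta_m$ whose component at $c$ sends $f \in \Bsorb{G}{B}(c,b)$ to $\uM(f)(m)$. By definition $d^{\gamma+n}$ is precomposition with $d_{\gamma+n}$, and the preceding corollary gives the $(x,y)$-component of $d_{\gamma+n}$ as postcomposition with $\Gamma_\gamma^{-1}(\delta_{\gamma+n}^{x,y}) \colon \Gamma_\gamma^{-1}(px) \to \Gamma_\gamma^{-1}(py)$ in $\Bsorb{G}{B}$. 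Naturality of $\eta_m$ in the first argument shows that this precomposition corresponds under Yoneda to the homomorphism
\[\uM(\Gamma_\gamma^{-1}(\delta_{\gamma+n}^{x,y})) \colon \uM(py) \to \uM(px),\]
which matches the map shown in the lower row.

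The proof is essentially routine once the preceding corollary is in hand. The main obstacle is just the bookkeeping required to match variances between $\uC_{\gamma+n}(X)$ (contravariant functor on $\Bsorb{G}{B}$) and $\uM$, to verify that the vertical isomorphisms intertwine the differentials, and to justify the passage from a product of Yoneda factors to a direct sum using the finite-type hypothesis.
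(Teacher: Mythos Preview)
Your proof is correct and follows essentially the same approach as the paper: both apply the Yoneda lemma to the representable summands of $\uC_{\gamma+n}(X)$ furnished by the preceding corollary, and both use that $\Gamma_\gamma$ is the identity on objects to identify $\uM(\Gamma_\gamma^{-1}(px))$ with $\uM(px)$. Your treatment is in fact a bit more thorough than the paper's, which leaves the verification that the coboundaries match under Yoneda to the phrase ``and the claim follows.''
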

\begin{proof} 
We apply the Yoneda lemma to $\Nat(\widehat{\Pi}_\gamma B(-,px), \uM \Gamma_{\gamma}^{-1}) $ and  use that $\Gamma_\gamma$ is an isomorphism of categories (see \cref{thm:gammagamma}) to get 
\[ \uM(\Gamma_\gamma^{-1}(px)) \cong \Nat(\widehat{\Pi}_\gamma B(-,px), \uM \Gamma_{\gamma}^{-1}) \cong \Nat({\Bsorb{G}{B}}(\Gamma_\gamma(-),px),\uM). \]
Since $\Gamma_\gamma$ and $\Gamma_\gamma^{-1}$ are the identity on objects (see \cref{def:gammagamma}), \[\uM(\Gamma^{-1}(px)) = \uM(px)\] 
for any center of a cell $x$, and the claim follows.
\end{proof}

\begin{rem} 
The elements $\delta_{\gamma+n}^{x,y}$ play a crucial role in computations, and so we discuss a bit more here. Namely, 
for each pair $x,y$, we construct an explicit diagram of lax maps that describes $\delta_{\gamma+n}^{x,y}$. This diagram is useful when trying to understand these maps in computations. We construct
\[ 
\xymatrix{
G_+\wedge_HS^{\gamma_0(x)+n,x}  \ar[d]  \ar[r]^-{\delta_{\gamma+n}^{x,y}} &G_+\wedge_KS^{\gamma_0(y)+n,y}   \\
\bigvee_{x} G_+\wedge_HS^{\gamma_0(x)+n,x} \ar[d]^-{k_{\gamma+n}} \ar[r]^{\delta_{\gamma+n}}  &  \bigvee_y G_+\wedge_KS^{\gamma_0(y)+n,y}  \ar[u]\\
X^{\gamma+n}/_B X^{\gamma+n-1} \ar[r]^-{\delta_{\gamma+n}} & \Sigma_B X^{\gamma+n-1}/_B \Sigma_B X^{\gamma+n-2}  \ar@{..>}[u]^-{\Sigma_B h_{\gamma+n}}  
 }\]
 where the two top unlabelled vertical arrows are the inclusions and the fiberwise quotient by the summands corresponding to $y'\neq y$. We will describe a dotted arrow which makes the diagram commute up to lax homotopy. Although we will call this dotted arrow  ``$\Sigma_B h_{\gamma+n}$'' we do not mean this literally since the lax map $h_{\gamma+n}$ is not a map of spaces over $B$ (it does not preserve the fibers), and so its fiberwise suspension over $B$ is not well-defined.
 
 We again, do this in the case when there is a single cell (see \cref{fig:hsig}), this time with center $y$,  with attaching map $\varphi \colon G\times_K D(W) \to X$ so that
 \[\Sigma_B X^{\gamma+n-1}/_B \Sigma_B X^{\gamma+n-2}  \cong \Sigma_B (G \times_K D(W) \cup_{p\partial \varphi} B) \]
  where $W$ is stably equivalent to $\gamma_0(y)+n-1$. 
We have that
 \[\Sigma_B X^{\gamma+n-1} = G\times_K (D(W)\times [-1,1]) \cup_{G\times_K(  D(W)\times  \{\pm 1\})} B\]
 where the gluing uses the projection onto $D(W)$ followed by $p\varphi$. We describe a map
\[  D(W)\times [-1,1] \to S^{W,y}\] 
 inducing the dotted arrow after the appropriate induction and gluing. On $D(W) \times \{t\}$, the map is given by collapsing the disk of radius $(1-|t|)/2$ in $D(W)$ to obtain a wedge $D(W) \vee S^W$, applying $p\varphi$ to the  $D(W)$ factor and mapping $S^W$ using the identity. This can be done continuously in $t$ by being careful about the identification of the quotient with the wedge.
This then induces a lax map
\begin{equation}\label{eq:sigh}\Sigma_B h_{\gamma+n} \colon \Sigma_B X^{\gamma+n-1}/_B \Sigma_B X^{\gamma+n-2} \to G_+\wedge_K S^{W,y} .
\end{equation}
\end{rem}

\begin{figure}[ht]
\begin{center}
\includegraphics[width=\textwidth]{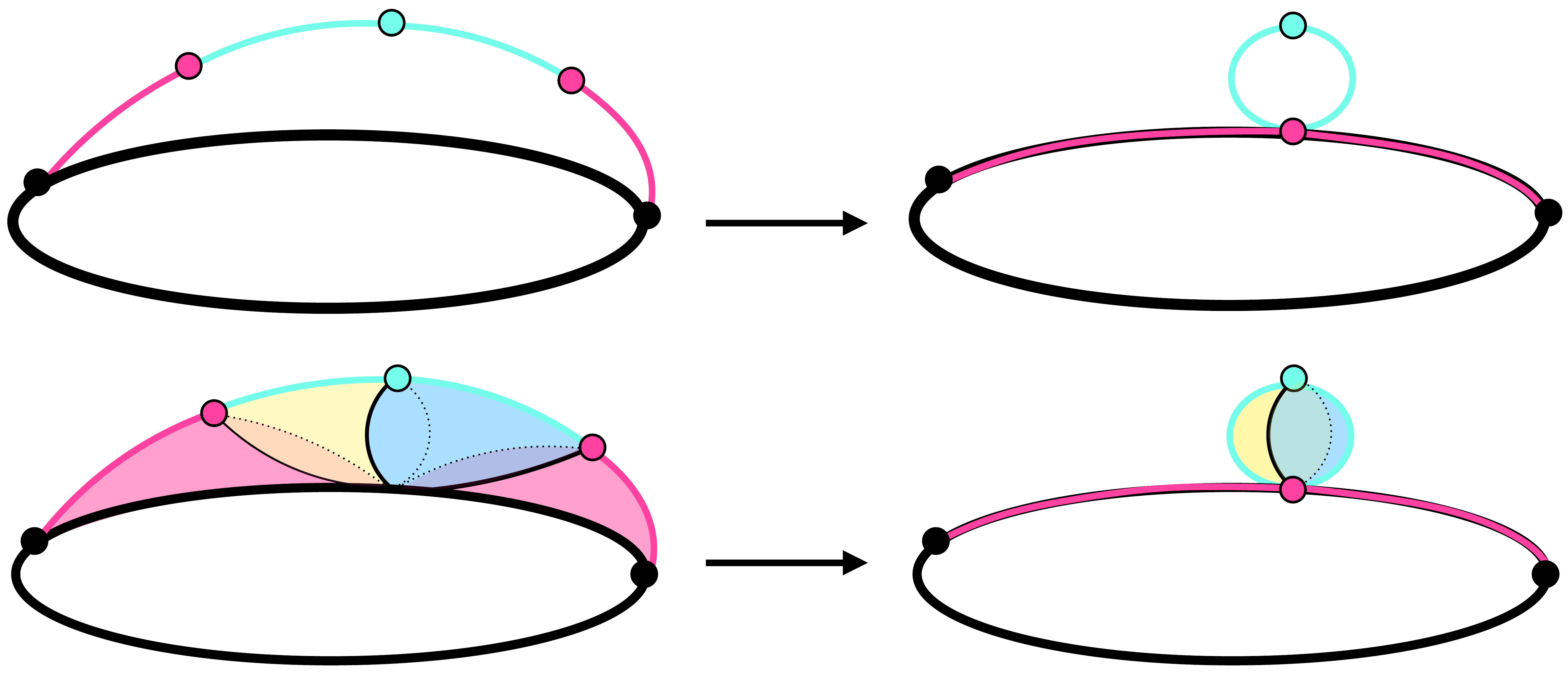}
\caption{The maps  \eqref{eq:h}   (top) and \eqref{eq:sigh} (bottom).}
\label{fig:hsig}
\end{center}
\end{figure}

\subsection{$RO(\Pi B)$ cohomology for the trivial group}\label{sec:trivialgroupcohomology}
Assume that $G=e$ is the trivial group and $B$ is a path-connected CW-complex of finite type. We examine the parametrized cellular cohomology for the ex-space $X=B_{+}$ whose total space is $B \sqcup s(B)$.  Here $s(B)$ is a copy of $B$ that is the image of the section $s=\id_B$ and the map $p$ is the identity on both factors.

 Let $\gamma \in \RO(\Pi B)$ have virtual dimension zero.  In this setting, the nonequivariant $\CWg$-structures do not depend on $\gamma$ (see \cref{ex:CWstructureG=e}). 
So in our CW-structure we may ignore $\gamma$, and we simply write $X^{\gamma+n} = X^n$ and $\delta_{\gamma+n} = \delta_n$.  Furthermore, we automatically get a  $\CWg$-structure on $X = B_+$ from a CW-structure on $B$ as in \cref{ex:CWstructureG=e}.  The $n$-skeleton is given by
\[X^{n} =B^n \sqcup B  .\]
The filtration quotients are
\[X^{n}/_B X^{n-1} = B^n \cup_{B^{n-1}} B \simeq \bigvee_x S^{n,x} \]
where $x$ runs over the centers of the $n$-cells in $B^n$. Since $px=x$, we have omitted $p$ from the notation in the wedge, and we will continue to do.

The connecting homomorphism is thus a map
\[\xymatrix{
X^{n}/_B X^{n-1}  \ar[d]^-\simeq \ar[r]^-{\delta_n} &  \Sigma_B X^{n-1}/_B X^{n-2} \ar[d]^-\simeq
\\
\bigvee_x S^{n,x} \ar[r]^-{\bigvee_{x,y} \delta_{n}^{x,y}} & \bigvee_y S^{n,y}
}\]
with 
\[\delta_n^{x,y} \in [S^{n,x}, S^{n,y}]_B =  \widehat{\Pi}_0 B(x,y) \xrightarrow[\cong]{\Gamma_0} {\Bsorb{e}{B}}(x,y).\]

Since the group $G=e$ is trivial, $\Pi B$ is a groupoid and all morphisms are invertible. This means that spans in $\Pi B$ are equivalent to morphisms in $\Pi B$. Therefore, ${\Bsorb{e}{B}}$ is the category with objects the points of $B$ and morphisms the free abelian group on homotopy classes of paths from $x$ to $y$:
\[{\Bsorb{e}{B}}(x,y) = \Z[\Pi B(x,y)].\]
The connecting homomorphisms are thus determined by elements  
\[ \delta_{n}^{x,y} \in \Z[\Pi B(x,y)]. \]

\begin{rem}\label{rem:rhoshriek}
To relate these maps from the nonequivariant parametrized setting to classical cellular chains, consider $\rho \colon B \to \pt$ inducing
\[\rho_!(X^{n}/_B X^{n-1}) = B^n/B^{n-1}. \]
Observe that $\rho_!(\delta_{n})$ is the map
\[B^n/B^{n-1} \to \Sigma B^{n-1}/B^{n-2}\]
 giving rise to the usual connecting homomorphism for the cellular chains. The map $\rho_! \colon [S^{n,x},S^{n,y}]_B \to [S^n, S^n]$ can be identified with the augmentation
\[[S^{n,x},S^{n,y}]_B \cong \Z[\Pi B(x,y)] \to \Z\cong [S^n, S^n] \]
which sends each generator in $\Pi B(x,y)$ to $1$, and so $\delta_n$ is not simply determined by the boundary in the classical cellular chains of $B$.
\end{rem}

We next describe how to compute the elements $\delta_n^{x,y}$. Consider the diagram
\[ \xymatrix{
\coprod_x S(\R^n) \ar[d] \ar[r]^-{\coprod_x \partial_x} & B^{n-1} \ar[d] \\
\coprod_x D(\R^n) \ar[r]^-{\coprod_x \varphi_x} & B^n 
} \]
constructing $B^{n}$ from $B^{n-1}$.
Choose an $n$-cell labeled by its center $x$ and an $(n-1)$-cell labeled $y$.  Consider the composite $f_{x,y}$ given by 
\begin{equation}
\label{eq:definition fxy}
\xymatrix{ S(\R^n) \ar[r]^-{\partial_x} & B^{n-1} \ar[r] &  B^{n-1}/B^{n-2} \ar[r] &  S^{n-1}_y}, 
\end{equation}
where the middle map is the quotient by the $(n-2)$-skeleton, and the last map is the quotient to the wedge summand corresponding to the $(n-1)$-cell with center $y$. We also denote by $y$ its image in $S^{n-1}_y$. 
Following the approach on page 192 of \cite{Bredon}, we can deform this map so that $y \in S^{n-1}_y$ is a regular value with preimage $f_{x,y}^{-1}(y) \subset S(\R^n) $. For $z\in f_{x,y}^{-1}(y)$, the  map 
\[D_zf_{x,y} \colon T_z S(\R^n) \to T_y S^{n-1}_{y}\]
is an isomorphism.  Using the canonical orientation coming from the fact that our cells are built from subspaces of $\R^n$, we can define the local degree
\[\ldeg{f}{z}:=\mathrm{sgn}\det(D_zf_{x,y}) \in \{\pm 1\}.\]
As usual, this degree is the same as that obtained from the composites 
\[\xymatrix{D(\R^n)/S(\R^n) \ar[r]^-{\varphi_x} & B^n/B^{n-1} \ar[r] & \Sigma B^{n-1}/B^{n-2} \ar[r] & \Sigma S^{n-1}_y}\]
in the computation of degrees for the cellular boundary of $B$, however the former description will be easier to work with.

\begin{notation}\label{not:omegaz} For any $z \in f_{x,y}^{-1}(y)$, with $f_{x,y}^{-1}$ as described above, let $ \mu_z \colon [0,1] \to D(\R^n)$ be the straight-line path starting at the center of the disk and ending at $z$. Then 
\[\omega_{x,y}^z :=\varphi_x \mu_z\]  is a path in $B$ from $x$ to $y$. 
\end{notation} 

\begin{lem}\label{thm:deltanxy}
The components of $\delta_n$ are given by
\[\delta_{n}^{x,y} = \sum_{z\in f_{x,y}^{-1}(y)} \ldeg{f}{z}\omega_{x,y}^z.  \]
\end{lem}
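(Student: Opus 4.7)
The plan is to compute $\delta_n^{x,y} \in [S^{n,x}, S^{n,y}]_B \cong \Z[\Pi B(x,y)]$ by reducing to local contributions at each preimage of $y$ under $f_{x,y}$, following the classical strategy for computing cellular boundary degrees but now tracking the parametrization by paths in $B$. First I would unwind the definition: the composite
\[
S^{n,x} \hookrightarrow \bigvee_{x'} S^{n,x'} \xrightarrow{k_n} X^{n}/_B X^{n-1} \xrightarrow{\delta_n} \Sigma_B X^{n-1}/_B X^{n-2} \xrightarrow{\Sigma_B h_n} \bigvee_{y'} S^{n,y'} \twoheadrightarrow S^{n,y}
\]
restricted to the $(x,y)$-summands represents $\delta_n^{x,y}$, and under the Yoneda identification each generator of $\Z[\Pi B(x,y)]$ corresponds to a homotopy class of paths from $x$ to $y$ with a sign.

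Next, as in the excerpt, I would perturb $f_{x,y}$ so that $y$ is a regular value with finite preimage $\{z_1, \ldots, z_k\} \subset S(\R^n)$, choose pairwise disjoint small disk neighborhoods $V_i \subset S(\R^n)$ of $z_i$ on which $f_{x,y}|_{V_i}$ is a diffeomorphism onto a common neighborhood $U$ of $y$, and by a further homotopy arrange that $f_{x,y}$ sends $S(\R^n) \setminus \bigcup V_i$ outside a smaller neighborhood of $y$. Radially extending each $V_i$ to $U_i = \{tv : v \in V_i,\ t \in [1-\eta, 1]\} \subset D(\R^n)$, a standard collapse argument (carried out fiberwise over $B$ by naturality of the quotient $X^n \to X^n/_B X^{n-1}$) shows that $\delta_n^{x,y}$ is lax-homotopic to a map factoring through $\bigvee_i U_i / \partial U_i$. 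By additivity this gives $\delta_n^{x,y} = \sum_i c_i$ in $\Z[\Pi B(x,y)]$, where $c_i$ is the contribution of the $U_i$-summand.

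Finally, I would identify $c_i = \varepsilon_{z_i}\, \omega_{x,y}^{z_i}$. The sign $\varepsilon_{z_i}$ falls out from the classical local degree computation on the underlying sphere map, which is unaffected by the parametrization over $B$. The path $\omega_{x,y}^{z_i} = \varphi_x \mu_{z_i}$ is extracted by tracking how the composite drags the attaching point in $B$: moving the parameter $t$ from $0$ to $1$ along $\mu_{z_i}$ corresponds, via the whiskered model of $k_n$ and the map $\Sigma_B h_n$ described in the excerpt, to the arc $\varphi_x \mu_{z_i}$ in $B$ from $x = \varphi_x(0)$ to $y = \varphi_x(z_i)$, with the last equality using that $z_i$ lies over the center of the $y$-cell under $\partial_x = \varphi_x|_{S(\R^n)}$. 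The most delicate step is this last identification: it requires a careful diagram chase through the lax equivalences $k_n$ and $\Sigma_B h_n$ to confirm the direction and endpoints of the path in $B$, since their whiskered models concatenate the homotopy data on the $D(\R^n)$-collar with the identity on the inserted sphere, and one must verify this concatenation traces out $\varphi_x \mu_{z_i}$ rather than its reverse or a modification.
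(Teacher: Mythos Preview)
Your proposal is correct and follows essentially the same approach as the paper: both perturb so that $y$ is a regular value, factor the map through a wedge indexed by the preimages $z \in f_{x,y}^{-1}(y)$ via a collapse, and identify each summand as $\varepsilon_z\,\omega_{x,y}^z$ by tracking the radial path $\mu_z$ through the lax equivalence $k_n$. The paper phrases the localization slightly differently---deforming $F_{x,y}$ so the preimages sit on the equator of $S^{n,x}$ and using the radial path $\nu_z$ in the lower hemisphere---but this is just a cosmetic variant of your radial-collar construction, and the delicate path-tracking step you flag is exactly the one the paper handles by asserting $\varphi_x k_n(\nu_z) = \omega_{x,y}^z$.
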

\begin{proof}
\cref{fig:hdelta2} gives a diagrammatic description of this proof when $B=\R P^2$ with the standard cell structure.
To compute 
\[\delta_n^{x,y} \in [S^{n,x}, S^{n,y}]_B \cong \Z[\Pi B(x,y)],\] 
we need to identify the
composites $F_{x,y}$ given by
\[\xymatrix@C=1.5pc{S^{n,x} \ar[r]^-{k_n} & D(\R^n)\cup_{\partial \varphi_x} B \ar[r]^-{\varphi_x}  & X^{n}/_B X^{n-1} \ar[r]^-{\delta_{n}} \ar[r] & \Sigma_B X^{n-1}/_B X^{n-2} \ar[r] & S^{n,y}   ,} \]
where $k_n$ is the map from \cref{rem:laxequiv}. 
If we apply $\rho_!$, we get 
\[\rho_!F_{x,y} \simeq \Sigma f_{x,y} \colon S^n_x \to S^n_y\]
for $f_{x,y}$ as above. We can deform $F_{x,y}$ relative to $B$ so that each $z\in F_{x,y}^{-1}(0)$ lies on the equator of $S^{n,x}$ and furthermore so that, in disjoint $\epsilon$-neighborhoods $U_z$ for each $z$, the map $F_{x,y}$ is differentiable with local degree $\ldeg{f}{z}$. Let $\nu_z$ be the radial path in the lower hemisphere of the $S^n$ in $S^{n,x}$ from the attachment point $x\in B$ to the boundary of the neighborhood $U_z$.  Then $\varphi_x k_n (\nu_z) = \omega_{x,y}^z$.

We get a factorization, up to homotopy relative to $B$, of $F_{x,y}$  as
\begin{equation}\label{eq:factorFxy}
S^{n,x} \to \bigvee_{z\in F_{x,y}^{-1}(0)} S^{n,x}_z \to S^{n,y}
\end{equation}
where 
\[S^{n,x}_z:= S^n \cup_{\infty= 1} [0,1]\cup_{0= x} B, \]
the whiskered space equivalent to $S^{n,x}$.
The first map in \eqref{eq:factorFxy} is the composite of a collapse map that pinches $S^{n}$ in $S^{n,x}$ into a wedge of spheres where each sphere contains a unique $U_z$, followed by the map that collapses $U_z$ to $S^n$ and the outside of $U_z$ continuously onto the interval $[0,1]$. The resulting map 
\[S^{n,y}_z \to S^{n,y} \]
is the map $\ldeg{f}{z}\omega_{x,y}^z$ and the claim follows.
\end{proof}

\begin{figure}[ht]
\begin{center}
\includegraphics[width=\textwidth]{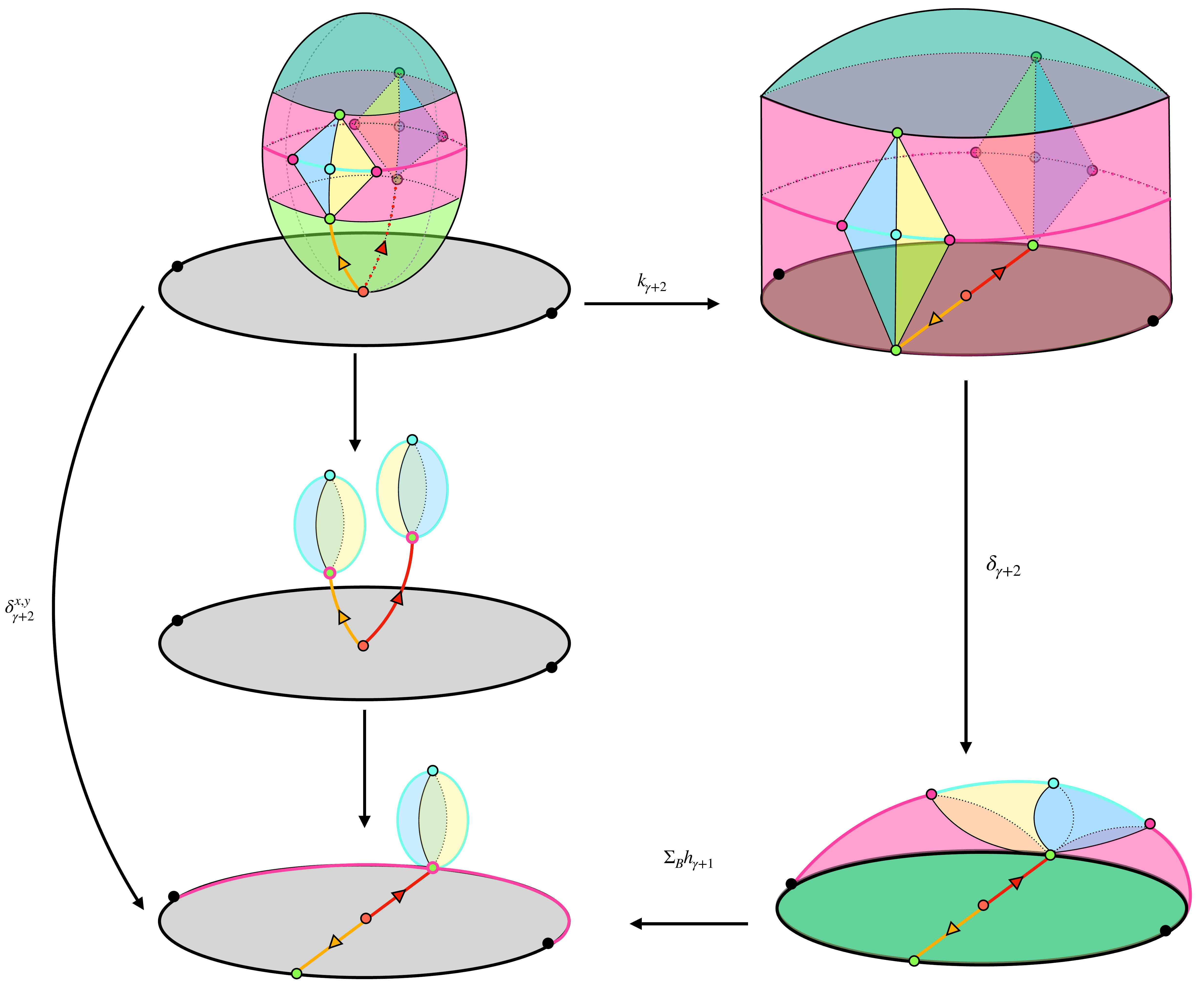}
\caption{The connecting homomorphism for $B=\mathbb{R}P^2$ with one cell in each dimension.}
\label{fig:hdelta2}
\end{center}
\end{figure}

Although $\gamma$ did not play a prominent role in the CW-structure on the nonequivariant $X=B_+$, it will play a role in the parametrized cellular chain complex. 
We are ready to discuss the twist coming from $\gamma$.

In order to be careful with signs, we make the following convention. 
\begin{convention}\label{ass:gamma} 
For $\gamma\in \RO(\Pi B)$ of dimension $|\gamma|$, we fix a representative $\gamma \colon \Pi B \to \vV(|\gamma|)$ such that $\gamma= \nu-n$ for an actual representation $\nu$. The existence of such a representative follows from \cref{example: coefficients when G = e}.
For all $x\in \Pi B$, we then have
\[\gamma(x)  =(\R^{|\nu|}, \R^n) =: \R^{|\gamma|}.\]
We thus get a function 
\[\mono{\gamma}  \colon \Pi B(x,y) \to \vV(\R^{|\gamma|},\R^{|\gamma|}) = O(1)\]
where the identification of $\vV(\R^{|\gamma|},\R^{|\gamma|})$ with $O(1)$ comes from \cref{ex:noneqvV} and $\mono{\gamma}(\omega)  \in O(1)$ is the homotopy class of bundle morphism  $\R^{|\gamma|} \to \R^{|\gamma|}$ associated with the path $\omega$.
\end{convention}
First, note the following useful observation, which follows from the equivalence of stable maps
\[\widehat{\Pi}_\gamma B(x,y) = [S^{|\gamma|,x}, S^{|\gamma|,y}]_B =  [S^{0,x}, S^{0,y}]_B = \widehat{\Pi}_{0} B(x,y)\]
for $\gamma\in \RO(\Pi B)$ as in \cref{ass:gamma}.

\begin{lem}\label{lem:actgamma}
Let $\gamma \in \RO(\Pi B)$. Then $\widehat{\Pi}_\gamma B =\widehat{\Pi}_{0} B$ and so
\[\Gamma_0^{-1}   \colon  \widehat{\Pi}_\gamma B \xrightarrow{\cong} {\Bsorb{e}{B}}  \]
is an isomorphism of categories. In particular,  each $\gamma$ gives rise to a ``twisting functor'' or automorphism
\[  \Gamma_0^{-1} \Gamma_\gamma \colon  {\Bsorb{e}{B}}  \xrightarrow{\cong} {\Bsorb{e}{B}}\]
which is the identity on objects, and on morphisms $ {\Bsorb{e}{B}}(x,y)=\Z[\Pi B(x,y)]$, the automorphism
\[  \Gamma_0^{-1} \Gamma_\gamma  \colon  \Z[\Pi B(x,y)] \to  \Z[\Pi B(x,y)]\]
given by
\[ \Gamma_0^{-1} \Gamma_\gamma (\omega) = \mono{\gamma}(\omega) \omega.\]
\end{lem}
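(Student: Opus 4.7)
The plan is to peel off the first statement, which is almost formal, and then unwind the construction of $\Gamma_\gamma$ on a single morphism of $\Pi B$ to read off the formula. First I would check the equality $\widehat{\Pi}_\gamma B = \widehat{\Pi}_0 B$. The object sets agree since both are the objects of $\Pi B$. For hom sets, since $G=e$ we have $H=K=e$, so by Convention \ref{ass:gamma} each parametrized orbit $G_+\wedge_H S^{\gamma_0(x),x}$ degenerates to the sphere $S^{|\gamma|,x}$ over $B$ with constant fiber. Fiberwise desuspension by the trivial representation $\R^{|\gamma|}$ (in a complete universe) then gives a natural identification $[S^{|\gamma|,x}, S^{|\gamma|,y}]_B \cong [S^{0,x}, S^{0,y}]_B$, so the hom sets match on the nose; composition compatibility is immediate. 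With this identification, the isomorphism of categories $\Gamma_0^{-1}\colon \widehat{\Pi}_\gamma B \xrightarrow{\cong} \Bsorb{e}{B}$ is just \cref{thm:gammagamma} applied with $\gamma=0$.

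Next, $\Gamma_0^{-1}\Gamma_\gamma$ is an automorphism of $\Bsorb{e}{B}$ as a composition of isomorphisms, and it is the identity on objects because both $\Gamma_0$ and $\Gamma_\gamma$ are the identity on objects by \cref{def:gammagamma}. To compute it on morphisms, I use that for $G=e$ every span in $\Pi B$ is equivalent to its right leg, so $\Bsorb{e}{B}(x,y) \cong \Z[\Pi B(x,y)]$, and it suffices to evaluate on a single $\omega \in \Pi B(x,y)$. By \cref{def:gammagamma}, $\Gamma_\gamma(\omega) = \res_\gamma(\omega)\circ \tr_\gamma(\mathrm{id}) = \res_\gamma(\omega)$. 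Unwinding \eqref{eq:resV}, $\res_\gamma(\omega)$ is the lax map $S^{|\gamma|,x} \to S^{|\gamma|,y}$ obtained by gluing the identity on $B$, the path $\omega$ on the whisker, and the fiberwise orthogonal map $\gamma(\omega)\colon \R^{|\gamma|} \to \R^{|\gamma|}$ on the sphere.

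The last step is to transport this back across the desuspension identification of the first paragraph. Under the identification $\vV(\R^{|\gamma|},\R^{|\gamma|}) \cong O(1)$ from \cref{ex:noneqvV}, the fiber contribution $\gamma(\omega)$ corresponds to $\mono{\gamma}(\omega) \in \{\pm 1\}$, and this is precisely the degree that survives stable desuspension. Therefore, under $\Gamma_0^{-1}$, the class $\res_\gamma(\omega) \in \widehat{\Pi}_\gamma B(x,y)$ corresponds to $\mono{\gamma}(\omega)\cdot \omega \in \Z[\Pi B(x,y)]$. Extending $\Z$-linearly yields the stated formula, and functoriality follows from functoriality of $\Gamma_\gamma$ and $\Gamma_0^{-1}$. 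The main obstacle is the sign bookkeeping in the last step: one must carefully match the class of $\gamma(\omega) \in \pi_0 O(|\gamma|)$ with $\mono{\gamma}(\omega) \in O(1)$ via the determinant, which is where \cref{ass:gamma} and the explicit description of $\vV(\R^{|\gamma|},\R^{|\gamma|})$ in \cref{ex:noneqvV} are essential.
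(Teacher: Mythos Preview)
Your proposal is correct and follows essentially the same line as the paper. In fact, the paper does not give a separate proof for this lemma: it records the key identification $[S^{|\gamma|,x}, S^{|\gamma|,y}]_B = [S^{0,x}, S^{0,y}]_B$ immediately before the statement and treats the rest as unwinding the definitions, which is precisely what you carry out in detail.
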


We summarize this discussion and the consequence for cohomology in the following result.
\begin{thm}\label{thm:cohunderlying}
Let $B$ be a CW-complex, and $X=B_+$ with CW-structure induced by that of $B$. For any $\gamma$ in $\RO(\Pi B)$, the parametrized cellular chain complex is the chain complex of parametrized Mackey functors
\[\uC_{\gamma-|\gamma|+*}(X) \colon \pars{\Bsorb{e}{B}}^{\op} \to \Ab, \]
which is isomorphic to the chain complex given in degree $n$ by
\[\uC_{\gamma-|\gamma|+n}(X) (- ) \cong \bigoplus_x \Z[\Pi B( -,x)]\]
together with the differential given by composition with 
\[d_{\gamma-|\gamma|+n}= \bigoplus_{x,y} \sum_{z \in f_{x,y}^{-1}(y)} \ldeg{f_{x,y}}{z} \mono{\gamma}(\omega_{x,y}^z)\omega_{x,y}^z, \]
where $x$ runs over the centers of the $n$-cells of $B$ and $y$ over the $(n-1)$-cells.

In particular, for a parametrized Mackey functor $\uM \colon \pars{\Bsorb{e}{B}}^{\op} \to \Ab$, we have a commutative diagram of isomorphic cochain complexes
\[\xymatrix{
\cdots \ar[r] & C^{\gamma-|\gamma|+n-1}(X;\uM )   \ar[d]^-\cong \ar[rrr]^-{d^{\gamma-|\gamma|+n }} & & &  C^{\gamma-|\gamma|+n}(X; \uM )  \ar[d]^-\cong \ar[r] & \cdots  \\
\cdots \ar[r] & \bigoplus_y \uM(y) \ar[rrr]^-{ \bigoplus_{x,y} \sum_{z \in f_{x,y}^{-1}(y)} \ldeg{f}{z} \mono{\gamma}(\omega_{x,y}^z)\uM(\omega_{x,y}^z) }  & & &   \bigoplus_x \uM(x) 
\ar[r] & \cdots}  \]
whose cohomology is $\wH^{\gamma-|\gamma|+*}(X;\uM) =H^{\gamma-|\gamma|+*}(B;\uM) $.
\end{thm}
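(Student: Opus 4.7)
The plan is to deduce this theorem by composing three earlier results: the Yoneda description of the cellular cochain complex from \cref{cor:yoneda}, the computation of the components $\delta^{x,y}_n$ of the connecting map from \cref{thm:deltanxy}, and the identification of the twisting automorphism $\Gamma_0^{-1}\Gamma_\gamma$ of $\Bsorb{e}{B}$ from \cref{lem:actgamma}. Throughout, I would use the representative of $\gamma$ fixed in \cref{ass:gamma}.

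First I would unpack \cref{cor:yoneda} in the nonequivariant setting. Since $G = e$, we have $\Bsorb{e}{B}(x,y) = \Z[\Pi B(x,y)]$. With the $\CWg$-structure on $B_+$ coming from the given CW-structure on $B$ (as in \cref{ex:CWstructureG=e}), the centers of the $n$-cells of $B_+$ are precisely the centers of the $n$-cells of $B$, and so
\[
\uC_{\gamma-|\gamma|+n}(X)(-) \cong \bigoplus_{x} \Bsorb{e}{B}(-,x) = \bigoplus_{x} \Z[\Pi B(-,x)].
\]
This establishes the first statement of the theorem, and the Yoneda lemma identifies $C^{\gamma-|\gamma|+n}(X;\uM)$ with $\bigoplus_x \uM(x)$, with the cochain differential obtained by applying $\uM$ to $\Gamma_\gamma^{-1}(\delta^{x,y}_n) \in \Bsorb{e}{B}(x,y)$. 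Thus everything reduces to computing the element $\Gamma_\gamma^{-1}(\delta^{x,y}_n)$.

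Next I would observe that the filtration quotient $X^n/_B X^{n-1}$ does not depend on $\gamma$, so the connecting map $\delta_n$ is the same stable map for every $\gamma$. By \cref{lem:actgamma}, the categories $\widehat{\Pi}_\gamma B$ and $\widehat{\Pi}_0 B$ coincide, and \cref{thm:deltanxy} computes
\[
\Gamma_0^{-1}(\delta^{x,y}_n) = \sum_{z \in f^{-1}_{x,y}(y)} \ldeg{f_{x,y}}{z}\, \omega^z_{x,y} \in \Z[\Pi B(x,y)].
\]
To pass from $\Gamma_0^{-1}$ to $\Gamma_\gamma^{-1}$, I would invoke the second half of \cref{lem:actgamma}: the automorphism $\Gamma_0^{-1}\Gamma_\gamma$ of $\Bsorb{e}{B}$ is the identity on objects and sends each path $\omega$ to $\mono{\gamma}(\omega)\,\omega$. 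Since $\mono{\gamma}(\omega) \in \{\pm 1\}$ is its own inverse, applying $(\Gamma_0^{-1}\Gamma_\gamma)^{-1}$ to the formula above yields
\[
\Gamma_\gamma^{-1}(\delta^{x,y}_n) = \sum_{z \in f^{-1}_{x,y}(y)} \ldeg{f_{x,y}}{z}\, \mono{\gamma}(\omega^z_{x,y})\, \omega^z_{x,y},
\]
which is precisely the chain-level differential claimed. Post-composing with $\uM$ then gives the stated cochain differential.

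The only delicate point in this plan is the second half of \cref{lem:actgamma}, which identifies the twist $\Gamma_\gamma(\omega) = \mono{\gamma}(\omega)\,\Gamma_0(\omega)$ on morphisms; however this is essentially built into the construction of the restriction $\res_\gamma(\omega)$ in \eqref{eq:resV}, since in the nonequivariant case the only $\gamma$-dependence of the lax map $\res_\gamma(\omega)$ is through the bundle morphism $\gamma(\omega) \in \vV(\R^{|\gamma|},\R^{|\gamma|}) \cong O(1)$, which by definition is the sign $\mono{\gamma}(\omega)$. Once this is in hand, the rest of the argument is a routine assembly of the three cited ingredients.
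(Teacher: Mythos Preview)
Your proposal is correct and matches the paper's approach exactly: the paper presents this theorem as a summary of the preceding discussion (introduced with ``We summarize this discussion and the consequence for cohomology in the following result''), and the three ingredients you assemble---\cref{cor:yoneda}, \cref{thm:deltanxy}, and \cref{lem:actgamma}---are precisely what that discussion consists of. Your explicit computation $\Gamma_\gamma^{-1} = (\Gamma_0^{-1}\Gamma_\gamma)^{-1}\circ \Gamma_0^{-1}$ together with the observation that $\mono{\gamma}(\omega)^{-1}=\mono{\gamma}(\omega)$ is the correct way to pass from the untwisted formula to the twisted one.
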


Next, we compare with the classical construction of cohomology with local coefficients using the universal cover $\widetilde{B}$ of $B$.   Let $b
\in B$ be the basepoint and let $\widetilde{B}$ be the universal cover with \[\widetilde{B} = \{[\lambda] \mid \lambda \text{ is a path in } B \text{ starting at } b\}\]
as usual. 
We fix the constant path $\widetilde{b} \in \widetilde{B}$ which maps to $b$ under the covering map
\[c\colon  \widetilde{B} \to B,\]
sending $[\lambda] \mapsto \lambda(1)$ to its endpoint.  A cellular structure on $B$ defines a cellular structure on $\widetilde{B}$ as follows. 

Consider the characteristic map $\varphi_{x} \colon D(\R^n) \to B$ for an $n$-cell of $B$ labelled by its center $x$. Given any element $\alpha_x \in \Pi B(b,x)$, there is a lift $\widetilde{\alpha}_x$ to $\widetilde{B}$ starting at $\widetilde{b} $ and whose end point $ \widetilde{x}_{\alpha_x}$ is uniquely determined by the homotopy class of $\alpha_x$ (relative its endpoints, i.e., up to path homotopy). 
Since $\widetilde{x}_{\alpha_x}$ maps to $x = \varphi_x(0)$, the lifting property for covering spaces gives a unique lift $\widetilde{\varphi}_{\alpha_x} \colon D(\R^n) \to \widetilde{B}$. We choose the CW-structure on $\widetilde{B}$, which has the $\widetilde{\varphi}_{\alpha_x}$ as characteristic maps. 

Thus we can describe the cellular chain complex $C_*(\widetilde{B})$, up to isomorphism, as follows. In degree $n$, it is the free abelian group on the set of homotopy classes of paths (relative endpoints) $\alpha_x$ from $b$ to $x$, where $x$ runs over all centers of the $n$-cells of $B$. In other words,
\[C_n(\widetilde{B})\cong \bigoplus_x\Z[\Pi B(b,x)].\]
Moreover, $\Pi B(b,b)$ acts on $C_n(\widetilde{B})$ by precomposition, making $C_n(\widetilde{B})$ a right  $\Pi B(b,b)$-module (where we are writing path composition from right to left as in the composition of functions).

For $\gamma=0$ in $\RO(\Pi B)$, let $\uC_{*}(X)=\uC_{\gamma+*}(X)$. Note that for $b\in B$,  $\uC_{*}(X)(b)$ is also a right $\Pi B(b,b)$-module. 
\begin{theorem}
    Let $B$ be a CW-complex, and $X=B_+$ with CW-structure induced by that of $B$.  Let $\widetilde{B}$ be the universal cover of $B$ with CW-structure as above.  There is an isomorphism
    \[C_*(\widetilde{B}) \cong \uC_{*}(X)(b)\]
    of chain complexes of right $\Pi B(b,b)$-modules. 
\end{theorem}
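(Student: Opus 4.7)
The plan is to exhibit a natural degreewise identification of the two chain complexes and then verify that it intertwines both the boundary maps and the right $\Pi B(b,b)$-actions. By \cref{thm:cohunderlying} applied with $\gamma = 0$, evaluating $\uC_n(X)$ at $b$ yields
\[
    \uC_n(X)(b) \;\cong\; \bigoplus_x \Z[\Pi B(b, x)],
\]
where $x$ ranges over centers of $n$-cells of $B$. The CW-structure on $\widetilde{B}$ described above yields
\[
    C_n(\widetilde{B}) \;\cong\; \bigoplus_x \Z[\Pi B(b, x)],
\]
with generator $\widetilde{\varphi}_{\alpha_x}$ corresponding to $\alpha_x \in \Pi B(b, x)$. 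Let $\Psi_n$ be the evident identification sending $\widetilde{\varphi}_{\alpha_x}$ to $\alpha_x$.

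Next, I would verify that $\Psi_\ast$ commutes with differentials. On $\uC_\ast(X)(b)$, the formula from \cref{thm:cohunderlying} (with $\gamma = 0$, so $\mono{\gamma}$ is identically $1$) sends $\alpha_x$ to
\[
    \sum_y \sum_{z \in f_{x,y}^{-1}(y)} \ldeg{f_{x,y}}{z}\, (\omega_{x,y}^z \circ \alpha_x).
\]
For $C_\ast(\widetilde{B})$, the cellular boundary of $\widetilde{\varphi}_{\alpha_x}$ is determined by the degree against each lifted $(n-1)$-cell $\widetilde{\varphi}_{\beta_y}$. Applying the covering lifting property to the straight-line paths $\mu_z$ of \cref{not:omegaz}, we see that $\widetilde{\partial}_{\alpha_x}(z) = \widetilde{y}_{\omega_{x,y}^z \circ \alpha_x}$ for each $z \in f_{x,y}^{-1}(y)$. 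Hence $\widetilde{\partial}_{\alpha_x}$ meets the cell $\widetilde{\varphi}_{\beta_y}$ precisely at those $z$ for which $\omega_{x,y}^z \circ \alpha_x = \beta_y$ in $\Pi B(b, y)$. Because $c \colon \widetilde{B} \to B$ is a local diffeomorphism compatible with the canonical orientations on cells inherited from $\R^n$, the local degree of the lifted map at such a $z$ coincides with $\ldeg{f_{x,y}}{z}$. Summing gives
\[
    \partial(\widetilde{\varphi}_{\alpha_x}) = \sum_y \sum_{z \in f_{x,y}^{-1}(y)} \ldeg{f_{x,y}}{z}\, \widetilde{\varphi}_{\omega_{x,y}^z \circ \alpha_x},
\]
which under $\Psi_{n-1}$ recovers the formula above.

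Finally, I would verify equivariance for the right $\Pi B(b, b)$-actions. On $\uC_n(X)(b)$, the action of $\beta \in \Pi B(b, b)$ is given by the functorial image of $\res(\beta)$, namely precomposition $\alpha_x \mapsto \alpha_x \circ \beta$. On $C_n(\widetilde{B})$, the deck transformation $D_\beta$ carries $\widetilde{b}$ to the endpoint of the lift of $\beta$, and by equivariance sends $\widetilde{x}_{\alpha_x}$ to the endpoint of the lift of $\alpha_x$ starting at $D_\beta(\widetilde{b})$, which is $\widetilde{x}_{\alpha_x \circ \beta}$. Thus $D_\beta$ permutes characteristic maps via $\widetilde{\varphi}_{\alpha_x} \mapsto \widetilde{\varphi}_{\alpha_x \circ \beta}$, so $\Psi_n$ is $\Pi B(b, b)$-equivariant. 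The principal obstacle is the comparison of local degrees upstairs and downstairs; this is essentially automatic once one notes that $c$ is a local diffeomorphism preserving canonical orientations, but it requires either lifting the Bredon-style transversality deformation used in \cref{thm:deltanxy} along $c$ or invoking covering-invariance of local degree directly.
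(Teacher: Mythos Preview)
Your proposal is correct and follows essentially the same approach as the paper's proof: both identify each side degreewise with $\bigoplus_x \Z[\Pi B(b,x)]$, then compare the cellular boundary on $\widetilde{B}$ with the formula from \cref{thm:deltanxy} by using the covering map to match preimages and local degrees, and by lifting the radial paths $\mu_z$ to determine which lifted $(n-1)$-cell each preimage lands in. The paper carries out the local-degree comparison via an explicit commutative diagram showing that $c$ restricts to a degree-one homeomorphism $S^{n-1}_{\widetilde{y}_{\beta_y}} \to S^{n-1}_y$, which is exactly the covering-invariance you flag as the main technical point; your added explicit check of $\Pi B(b,b)$-equivariance via deck transformations is stated more tersely in the paper but is the same argument.
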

\begin{proof} 
By Theorem \ref{thm:cohunderlying}, $\uC_n(X)(b)$ is also isomorphic to $\bigoplus_x\Z[\Pi B(b,x)]$ as a right $\Pi B(b,b)$-module and
we get an isomorphism of $\Pi B(b,b)$-modules 
\[ C_n(\widetilde{B}) \cong \bigoplus_x \Z[\Pi B(b,x)] \cong \uC_n(X)(b)  \]
in every degree $n$, where $x$ runs over the centers of the $n$-cells in $B$. 
To show that the chain complexes $C_*(\widetilde{B})$ and $\uC_n(X)(b)$ are isomorphic, it remains to prove that the boundary $d_n$ of $\uC_*(X)(b)$ agrees with the boundary $\widetilde{d}_n$ of $C_*(\widetilde{B})$ under the isomorphisms above.

Fix $x$, the center of an $n$-cell in $B$.  By \cref{thm:deltanxy}, the differential in $\uC_n(X)(b)$ on $\alpha_x \in \Pi B(b,x)$ is given by
\[d_n(\alpha_x) = \sum_{z\in f_{x,y}^{-1}(y)} \ldeg{f}{z}  (\omega_{x,y}^z \star \alpha_x),\]
where $y$ runs over the centers of the $(n-1)$-cells in $B$, and $\omega_{x,y}^z \star \alpha_x$ is the path $\alpha_x\colon b\to x$ followed by the path $\omega_{x,y}^z \colon x \to y$ defined in \cref{not:omegaz}.

We show that the same formula holds for $\widetilde{d}_n$.  For an $n$-cell with center $x$ in $B$ and $\alpha_x\in \Pi B(b,x)$, let $\widetilde{x}_{\alpha_x}$ be the endpoint of the lift of $\alpha_x$ starting at $\widetilde{b}$. Note that $\widetilde{x}_{\alpha_x}$ is the center of the $n$-cell of $\widetilde{B}$ corresponding to $\alpha_x$. Similarly, for $y$ the center of an $(n-1)$-cell in $B$ and $\beta_y\in \Pi B(b,y)$, let $\widetilde{y}_{\beta_y}$ be the endpoint of the lift of $\beta_{y}$ starting at $\widetilde{b}$. Again, $\widetilde{y}_{\beta_y}$ is the center of the $(n-1)$-cell labelled by $\beta_y$.

 Recall that the characteristic map $\widetilde{\varphi}_{\alpha_x}$ is the unique lift of the characteristic map $\varphi_x$ sending the center of $D(\R^n)$ to $\widetilde{x}_{\alpha_x}$.
 We orient our cells in $B$ and $\widetilde{B}$ using the orientation induced from the standard orientation of $D(\R^n)$ via the characteristic maps $\varphi_x$ and $\widetilde{\varphi}_{\alpha_x}$. Consequently, the covering map $c \colon \widetilde{B} \to B$ has degree 1 when restricted to any cell.

We let $\widetilde{\partial}_{\alpha_x}=\widetilde{\varphi}_{\alpha_x}\vert_{\partial D(\R^n)}$. Define $\widetilde{f}_{\alpha_x,\beta_y}$ by the composition
\[\widetilde{f}_{\alpha_x,\beta_y}\colon \xymatrix{S(\R^n)\ar[r]^-{\widetilde{\partial}_{\alpha_x}} & \widetilde{B}^{n-1} \ar[r] &  \widetilde{B}^{n-1}/\widetilde{B}^{n-2}  \ar[r] &  S_{\beta_y}^{n-1}}\]
where $S^{n-1}_{\beta_y}$ is the component of the wedge $\widetilde{B}^{n-1}/\widetilde{B}^{n-2}$ corresponding  to the $(n-1)$-cell with characteristic map $\widetilde{\varphi}_{\beta_y}$.
We have the following commutative diagram: 
\[\xymatrix{
& & & S^{n-1}_{\widetilde{y}_{\beta_y}} \ar[d]
\\
S(\R^n)\ar[r]_-{\widetilde{\partial}_{\alpha_x}} \ar[d]^-= \ar@/^2pc/[rrru]^-{\widetilde{f}_{\alpha_x,\beta_y}}  & \widetilde{B}^{n-1}\ar[d]^-c \ar[r]& \widetilde{B}^{n-1} /\widetilde{B}^{n-2} \ar[r] \ar[d]^-c & \bigvee_{\beta_y \in \Pi B (b,y)} S^{n-1}_{\widetilde{y}_{\beta_y}}\ar[d]^-c
\\ 
S(\R^n) \ar@/_2pc/[rrr]_-{f_{x,y}} \ar[r]^-{\partial_x}  & B^{n-1} \ar[r] & B^{n-1}/B^{n-2}  \ar[r] & S^{n-1}_y \\
\\
}\]
Here, the vertical arrows are induced by the covering map $c$.
As above, we assume that $y \in S^{n-1}_y$ is a regular value of $f_{x,y}$. Since $c$ is a covering map, the induced map
\[ S^{n-1}_{\widetilde{y}_{\beta_y}} \to S^{n-1}_y \]
is a homeomorphism (of degree one).
Together with the commutativity of the diagram, this implies that $\widetilde{y}_{\beta_y}$ is a regular value of  $\widetilde{f}_{\alpha_x,\beta_y}$. Moreover, 
\[\coprod_{\beta_y \in \Pi B(b,y)} \widetilde{f}_{\alpha_x,\beta_y}^{-1}(\widetilde{y}_{\beta_y}) = f_{x,y}^{-1}(y),\]
and 
\[\deg(D_z\widetilde{f}_{\alpha_x,\beta_y}) = \deg(D_z{f}_{x,y}) =\ldeg{f}{z} \]
for $z$ a preimage of $\widetilde{y}_{\beta_y}$ under $\widetilde{f}_{\alpha_x,\beta_y}$. Note that $z$ is thus also a preimage of $y$ under $f_{x,y}$.

For such a $z\in \widetilde{f}_{\alpha_x,\beta_y}^{-1}(\widetilde{y}_{\beta_y})$, let $\mu_{z}$ be the affine linear path from the center of $D(\R^n)$ to $z$ and let $\widetilde{\omega}^{z}_{\alpha_x,\beta_y}=\widetilde{\varphi}_{\alpha_x}\circ \mu_{z}$.
Then 
\[c\circ \widetilde{\omega}^{z}_{\alpha_x,\beta_y} = c \circ \widetilde{\varphi}_{\alpha_x}\circ \mu_{z} = \varphi_x \circ \mu_z= \omega_{x,y}^z  \]
where $\omega_{x,y}^z$ is as above.
Therefore,
\[(c\circ \widetilde{\omega}^{z}_{\alpha_x,\beta_y})\star\alpha_x = \omega_{x,y}^z \star \alpha_x.\]
Note that $ \omega_{x,y}^z \star \alpha_x$ is a path in $B$ from $b$ to $y$ and the endpoint of its lift to $\widetilde{B}$ with starting point $\widetilde{b}$ is $\widetilde{y}_{\beta_y}$. Since $\widetilde{B}$ is simply connected, there is a unique path up to homotopy from $\widetilde{b}$ to $\widetilde{y}_{\beta_y}$, and we get a path homotopy 
\[\omega_{x,y}^{z} \star \alpha_x \simeq\beta_y.\]

Now we can give an explicit formula for \[\widetilde{d}_n \colon \bigoplus_x \Z[\Pi B(b,x)] \to \bigoplus_y \Z[\Pi B(b,y)].  \] 
On the cell labelled by the path $\alpha_x \in \Pi B(b,x)$, 
we have that
\begin{align*}
\widetilde{d}_n(\alpha_x)
&=\sum_{\beta_y \in \Pi B(b,y)} \deg (D_z\widetilde{f}_{\alpha_x,\beta_y})\beta_y \\
&= \sum_{\beta_y \in \Pi B(b,y)} 
 \sum_{z \in \widetilde{f}_{\alpha_x,\beta_y}^{-1}(\widetilde{y}_{\beta_y})} \deg( D_{z}\widetilde{f}_{\alpha_x,\beta_y} ) \beta_y \\
 &= \sum_{\beta_y \in \Pi B(b,y)} 
 \sum_{z \in \widetilde{f}_{\alpha_x,\beta_y}^{-1}(\widetilde{y}_{\beta_y})} \ldeg{f}{z}  (\omega_{x,y}^z \star \alpha_x) \\
  &= \sum_{z\in f_{x,y}^{-1}(y)} \ldeg{f}{z}  (\omega_{x,y}^z \star \alpha_x).
 \end{align*}
This is the same as the boundary formula for $\uC_n(X)(b)$.
\end{proof}

\begin{cor}
Let $\gamma \in RO(\Pi B)$, $X=B_+$ and $b\in B$.
 There is an isomorphism 
       \[C_*(\widetilde{B})_\gamma \cong \uC_{\gamma-|\gamma|+*}(X)(b),\]
       where $C_*(\widetilde{B})_\gamma $ is the right $\Pi B(b,b)$-module $C_*(\widetilde{B})$ with action twisted by $\gamma$. That is, the action of $\omega \in \Pi B(b,b)$ on $C_*(\widetilde{B})_\gamma$ is given by $c\mapsto \mono{\gamma}(\omega) c\omega$ for $c\in C_*(\widetilde{B})$ and $c\mapsto c\omega$ the canonical right action of $\Pi B(b,b)$ on $C_{*}(\widetilde{B})$ given by deck transformations.
\end{cor}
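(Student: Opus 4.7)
The plan is to lift the conclusion of the preceding theorem to general $\gamma$ by constructing an explicit sign-twisting isomorphism that redistributes the $\gamma$-twist between the differential and the module action.

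First, I would apply Theorem \ref{thm:cohunderlying} at the basepoint $b$ to identify $\uC_{\gamma-|\gamma|+n}(X)(b)$ with $\bigoplus_x \Z[\Pi B(b,x)]$, where $x$ runs over centers of $n$-cells of $B$. Unwinding the Yoneda identification of Corollary \ref{cor:yoneda} together with Lemma \ref{lem:actgamma}, one checks that under this identification the right $\Pi B(b,b)$-action coming from the functoriality of $\uC_{\gamma-|\gamma|+*}(X)$ is the untwisted path-precomposition action $\alpha_x \mapsto \alpha_x \star \omega$, while the differential acquires the twisting factor $\mono{\gamma}(\omega_{x,y}^z)$ described in Theorem \ref{thm:cohunderlying}. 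By the preceding theorem, $C_*(\widetilde{B})$ is identified with the same graded abelian group $\bigoplus_x \Z[\Pi B(b,x)]$, equipped with the untwisted deck-transformation action and the untwisted cellular differential $\widetilde{d}_n(\alpha_x) = \sum_z \ldeg{f}{z}(\omega_{x,y}^z \star \alpha_x)$.

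Next, I would define $\phi \colon C_*(\widetilde{B})_\gamma \to \uC_{\gamma-|\gamma|+*}(X)(b)$ by $\phi(\alpha_x) = \mono{\gamma}(\alpha_x)\alpha_x$ on each generator $\alpha_x \in \Pi B(b,x)$, and extend $\Z$-linearly. Since $\mono{\gamma}$ takes values in $\{\pm 1\}$, $\phi$ is a grading-preserving automorphism of the underlying graded abelian group. Verifying that $\phi$ is a chain map reduces to the functoriality identity $\mono{\gamma}(\omega_{x,y}^z \star \alpha_x) = \mono{\gamma}(\omega_{x,y}^z)\mono{\gamma}(\alpha_x)$, so that both $\phi\widetilde{d}_n(\alpha_x)$ and $d\phi(\alpha_x)$ collapse to $\mono{\gamma}(\alpha_x)\sum_z\ldeg{f}{z}\mono{\gamma}(\omega_{x,y}^z)(\omega_{x,y}^z \star \alpha_x)$.

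Finally, I would verify equivariance: for $\omega \in \Pi B(b,b)$, using $\mono{\gamma}(\omega)^2 = 1$,
\[\phi(\alpha_x \cdot_\gamma \omega) = \phi\bigl(\mono{\gamma}(\omega)(\alpha_x \star \omega)\bigr) = \mono{\gamma}(\omega)\mono{\gamma}(\alpha_x \star \omega)(\alpha_x \star \omega) = \mono{\gamma}(\alpha_x)(\alpha_x \star \omega) = \phi(\alpha_x) \cdot \omega.\]
The only real obstacle is bookkeeping: carefully tracking the two $\Pi B(b,b)$-module structures that live on the same underlying graded abelian group, and confirming that the single sign correction $\mono{\gamma}(\alpha_x)$ simultaneously converts the twisted action into the untwisted action and the untwisted differential into the twisted one.
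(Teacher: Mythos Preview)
Your argument is correct. The paper's proof is a one-liner: it simply invokes the identity $\Gamma_0^{-1}\Gamma_\gamma(\omega)=\mono{\gamma}(\omega)\omega$ from Lemma~\ref{lem:actgamma}. The point is that by that lemma $\widehat{\Pi}_\gamma B=\widehat{\Pi}_0 B$, so the chain complex $\uC_{\gamma-|\gamma|+*}(X)(b)$ is \emph{literally equal} to $\uC_*(X)(b)$ as a chain complex of abelian groups (same groups, same differential $-\circ\delta_n$); only the $\Pi B(b,b)$-action differs, by precomposition with $\Gamma_\gamma$ versus $\Gamma_0$, and that discrepancy is exactly the $\gamma$-twist. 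The preceding theorem then gives the result directly. Your route instead identifies both sides with $\bigoplus_x\Z[\Pi B(b,x)]$ via $\Gamma_\gamma^{-1}$ (which moves the twist from the action into the differential) and then builds an explicit sign automorphism $\phi$ to move it back. Your $\phi(\alpha_x)=\mono{\gamma}(\alpha_x)\alpha_x$ is precisely the map $\Gamma_0^{-1}\Gamma_\gamma$ applied to generators, so the two arguments are the same isomorphism viewed through different identifications; the paper's framing just avoids introducing $\phi$ at all.
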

\begin{proof}
This follows from $ \Gamma_0^{-1} \Gamma_\gamma (\omega) = \mono{\gamma}(\omega)\omega$ for $\omega \in  \Pi B(b,b)$.
\end{proof}

An immediate consequence is the following theorem.
\begin{theorem}\label{thm:localispofinal}
Let $\uN \colon \pars{\Bsorb{e}{B}}^\op \to \Ab$ be the parametrized constant Mackey functor at $N$ and let $X = B_+$. 
There is an isomorphism 
\[ H^{\gamma+*}(B;\uN)=\wH^{\gamma+*}(X ;\uN) \cong H^{|\gamma|+*}(B ; N_\gamma) \] 
where the right hand side is cellular cohomology with local coefficients, and 
\[N_\gamma \colon \Pi B \to \Ab\] 
is the local coefficient system with value $N$ on objects and with $N_\gamma(\omega) = \mono{\gamma}(\omega)$ on morphisms.
That is, parametrized nonequivariant cohomology corresponds to cohomology with local coefficients.
\end{theorem}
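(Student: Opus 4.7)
The plan is to apply the explicit cochain description from \cref{thm:cohunderlying} and identify the resulting complex with the standard cellular cochain complex computing cohomology with local coefficients in $N_\gamma$.

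First, I would invoke \cref{thm:cohunderlying}, which identifies the parametrized cochain complex with $\bigoplus_x \uN(x)$ in each degree and describes the coboundary by
\[
\bigoplus_{x,y}\sum_{z \in f_{x,y}^{-1}(y)} \ldeg{f}{z}\,\mono{\gamma}(\omega_{x,y}^z)\,\uN(\omega_{x,y}^z).
\]
Since $\uN$ is the constant Mackey functor, $\uN(\omega) = \id_N$ for every path $\omega$, so each block of the coboundary reduces to multiplication by the integer $\sum_z \ldeg{f}{z}\,\mono{\gamma}(\omega_{x,y}^z) \in \Z$. This is already the shape of a local-coefficient cochain complex.

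Next, I would use the corollary immediately preceding the theorem, which identifies $\uC_{\gamma-|\gamma|+*}(X)(b) \cong C_*(\widetilde{B})_\gamma$ as chain complexes of right $\pi$-modules, where $\pi = \Pi B(b,b)$. Since $B$ is path-connected, $\Pi B$ is a connected groupoid, so $\Bsorb{e}{B}$ is equivalent (after $\Z$-linearization) to the one-object category $\pi$. Evaluation at $b$ therefore identifies parametrized Mackey functors over $B$ with $\pi$-modules, and natural transformations with $\pi$-equivariant homomorphisms. In particular,
\[
C^{\gamma-|\gamma|+n}(X; \uN) = \Nat(\uC_{\gamma-|\gamma|+n}(X), \uN) \cong \Hom_\pi(C_{|\gamma|+n}(\widetilde{B})_\gamma, N).
\]
Because $\mono{\gamma}$ takes values in $\{\pm 1\}$ (so $\mono{\gamma}^{-1} = \mono{\gamma}$), the standard move exchanging a twist on the source for a twist on the target gives $\Hom_\pi(C_*(\widetilde{B})_\gamma, N) \cong \Hom_\pi(C_*(\widetilde{B}), N_\gamma)$, and the latter is by definition the cellular cochain complex with local coefficients in $N_\gamma$. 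Passing to cohomology yields the desired isomorphism.

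The main obstacle is the bookkeeping around the $\mono{\gamma}$-twist: one must track whether the twist sits on the source $C_*(\widetilde{B})_\gamma$ or the target $N_\gamma$ and confirm that the two presentations of the coboundary coincide on the nose. The preceding corollary has already carried out the analogous identification at the chain level, and the involutive nature of $\mono{\gamma}$ makes the dualization essentially automatic, so the remaining work is a careful verification that evaluation at $b$ together with the $\pi$-equivariance constraint reproduces exactly the signed-count coboundary formula extracted from \cref{thm:cohunderlying}.
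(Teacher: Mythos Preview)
Your proposal is correct and follows essentially the same route as the paper, which simply declares the theorem an ``immediate consequence'' of the preceding corollary identifying $\uC_{\gamma-|\gamma|+*}(X)(b)$ with $C_*(\widetilde{B})_\gamma$. You have carefully spelled out the steps the paper leaves implicit: that evaluation at $b$ gives an equivalence between parametrized Mackey functors over $B$ and $\pi$-modules (since $\Pi B$ is a connected groupoid), that this identifies $\Nat(\uC_{\gamma-|\gamma|+*}(X),\uN)$ with $\Hom_\pi(C_*(\widetilde{B})_\gamma,N)$, and that the involutive twist $\mono{\gamma}$ can be moved from the source to the target to yield $\Hom_\pi(C_*(\widetilde{B}),N_\gamma)$.
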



\section{Computations of parametrized cellular cohomology}\label{sec:ex}
In this section, we compute some examples of equivariant parametrized cellular cohomology in the case when $G=C_2$ and $G=C_4$. We use constant $\uZ$-coefficients and $\underline{\F}_2$-coefficients. We do not give complete $\RO(\Pi B)$-graded computations, but we compute some degrees beyond $\RO(G)$.  Given a $\CWg$-complex, the most difficult part is to determine the coboundaries in the cochain complex from the attaching maps.

In the computations that follow, so much depends on the center of cells that we abuse notation and use $x$ both to denote the center of a cell in $B$ and to denote the characteristic map of the cell.  That is, for a $\pars{\gamma+n}$-cell, we write $x \colon G \times_H D(V) \to B$ to mean there is a characteristic map with $x \colon G/H \to B$  the center of the cell. To identify the coboundary in constant $\uZ$-coefficients: if $x$ is the center of a $\pars{\gamma+n}$-cell and $y$ the center of a $\pars{\gamma+n-1}$-cell, we express the component $\delta_{\gamma+n}^{x,y}$ of the connecting homomorphism $\delta_{\gamma+n}$ as $\Gamma_\gamma(f^{x,y})$ for some $f^{x,y}\in \Bsorb{G}{B}(x,y)$.

Once we have identified $\delta_{\gamma+n}^{x,y} = \Gamma_\gamma(f^{x,y})$, the corresponding component of the coboundary $d^{\gamma+n}$ of the cellular cochain complex is given by
\[\uZ(\Gamma_\gamma^{-1}(\delta_{\gamma+n}^{x,y}))=\uZ(f^{x,y}).\]
Recall from \cref{def:constantMackey} that $\uZ = \rho^*\uZ$ where $\uZ$ is the constant (nonequivariant) Mackey functor and $\rho\colon B \to G/G$.
So we can write the component of the coboundary as
\[\uZ(f^{x,y}) = \rho^*\uZ(f^{x,y})  = \uZ(\rho_!f^{x,y}),\]
where the latter is described in the discussion before \cref{def:constantMackey}.  This gives the identity on restrictions and multiplication by the index for transfers.
 For a point $b \colon G/H \to B$, let  $b^*\in \uZ(b)$ denote the canonical generator of 
\[\uZ(b)=\hom^{G}(\rho_!b,\Z) = \hom^{G}(G/H,\Z) \cong \Z \]
determined by $b^*(eH)=1$.

In order to illustrate the method of computation, we begin with familiar examples and do a few computations in $RO(G)$-degrees. As expected, these agree with Bredon cohomology, which can be verified using the reader's favorite $\RO(G)$-graded computational methods.

\subsection{A $C_2$-equivariant disk}
A first goal might be to recover the $RO(C_2)$-graded cohomology of a point (as described in \cite{Hazel_cohZ}, for example, and originally due to unpublished work of Stong).  One quickly runs into the issue that the point $C_2/C_2$ does not admit a $\CWg$-structure for most choices of $\gamma$. See \cref{warn:CWfail}. 

In particular, $C_2/C_2$ does not admit a $\CWg$-structure for $\gamma=\R^{1,1}-1$, but the unit disk $D(\R^{1,1})$ does. So we start with the disk.

\subsubsection{$G=C_2$, $B=D(\R^{1,1})$, and $\gamma =\R^{1,1}-1$}
We give $D(\R^{1,1})$ the $\CWg$-structure with a single $\pars{\gamma+0}$-cell
\[
b\colon C_2/e\rightarrow D(\R^{1,1})
\]
and a single $\pars{\gamma+1}$-cell
\[
b_1\colon C_2 \times_{C_2} D(\R^{1,1})\rightarrow D(\R^{1,1})
\]
as illustrated in \cref{fig:D(sigma)}.

To compute the coboundary, we must understand the attaching map.  Here the attaching map is from a cell with a fixed center $C_2/C_2$ to a cell with center a free orbit $C_2/e$, so requires a transfer map.\footnote{Notice the $\CWg$-structures require equivariant parametrized cellular cohomology to use both restrictions and transfer maps, whereas $\RO(G)$-graded cohomology of a $G$-CW complex can be computed with only restrictions.}  One observes that the connecting homomorphism is the composition 
\[\delta=\delta^{b_1,b}=\res_\gamma(\id,\omega)\circ\tr_\gamma(\rho,c)\] with $c$ constant, $\rho\colon C_2/e\rightarrow C_2/C_2$ the quotient, and $\omega \colon C_2 \times I \to D(\R^{1,1})$ the outward path depicted in \cref{fig:D(sigma)}.
This is seen by inspection, but one difficulty with computations is obtaining the correct signs, so we provide a bit more detail.
\begin{figure}[t]
    \centering
\includegraphics[width=0.5\textwidth]{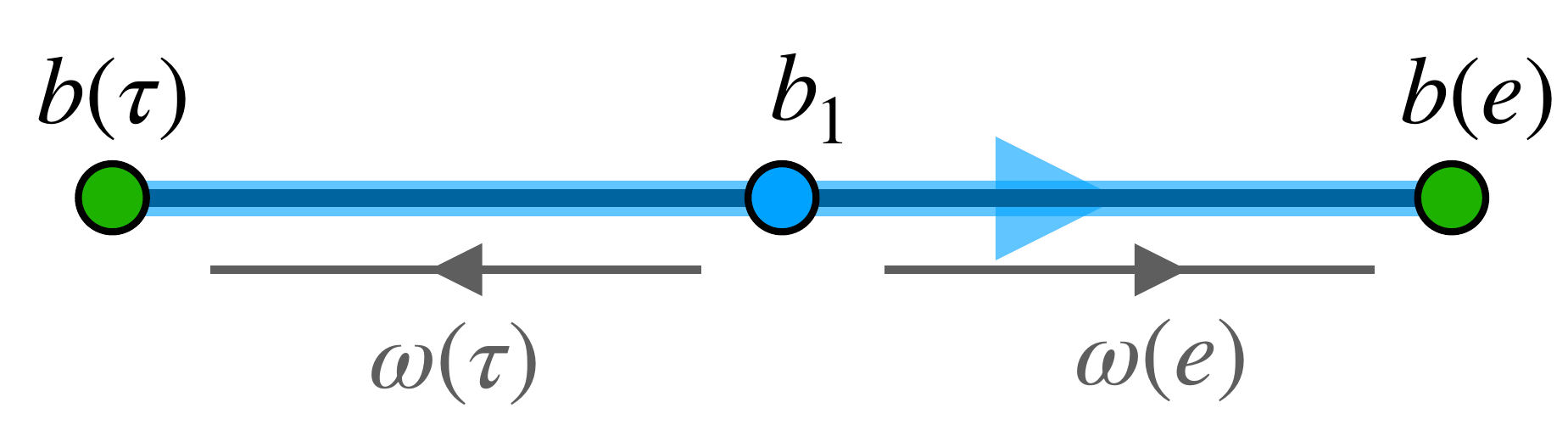}
    \caption{$\CWg$-structure for $D(\R^{1,1})$ with $\gamma= \R^{1,1}-1$}
    \label{fig:D(sigma)}
\end{figure}

We write $\omega(e) = \omega(e,t) \colon I \to i^*_eD(\R^{1,1})$ and similarly for the other paths.  Then orienting the $\pars{\gamma+1}$-cell as indicated by the blue arrow in \cref{fig:D(sigma)}, we find that 
\[i^*_e\delta_1^{b_1,b} = \begin{pmatrix}\omega(e) \\ -\omega(\tau)\end{pmatrix},
\]
while
\[i^*_e\tr_\gamma(\rho,c )= \begin{pmatrix}c(e) \\ -c(\tau) \end{pmatrix}
\quad \quad \text{and} \quad \quad  
i^*_e\res_\gamma(\id,\omega)= \begin{pmatrix}\omega(e) & 0 \\ 0 & \omega(\tau)\end{pmatrix}.
\]
The sign in the transfer comes from the shear map in its definition, see \eqref{eq:trnew}. For the signs on the restriction, we use the definition of the constant representation $\R^{1,1}$ that assigns to as in \cref{ex:constant} to compute
\[\xymatrix@C=8pc{
\gamma(b_1\rho)\ar@{=}[d] \ar[r]^-{\gamma(\id,\omega)} & \gamma(b)\ar@{=}[d] \\
C_2\times_e \R^1 \ar@{-->}[r]^-{\begin{pmatrix} +1 & 0 \\
0 & +1
\end{pmatrix}} & C_2\times_e \R^1
\\
 C_2/e \times \R^{1,1} \ar[r]_-{(\id,\id)=\begin{pmatrix} +1 & 0 \\
0 & +1
\end{pmatrix}}\ar[u]_-{\cong}^-{\mathrm{shear} =\begin{pmatrix} +1 & 0 \\
0 & -1
\end{pmatrix}} & C_2/e\times \R^{1,1}\ar[u]^-{\cong}_-{\mathrm{shear} =\begin{pmatrix} +1 & 0 \\
0 & -1
\end{pmatrix}}
}\]
where we abused notation with our matrix notation as described in \cref{rem:matrixrem}. 

Notice that the path did not matter for this computation.  As an aside, if we needed to compute $\gamma(\tau,\omega)$ we would have computed the following:
\[\xymatrix@C=5pc{
\gamma(b_1\rho)\ar@{=}[d] \ar[r]^-{\gamma(\tau,\omega)} & \gamma(b)\ar@{=}[d] \\
C_2\times_e \R^1 \ar@{-->}[r]^-{\begin{pmatrix} 0 & -1 \\
-1 & 0
\end{pmatrix}} & C_2\times_e \R^1
\\
 C_2/e \times \R^{1,1} \ar[r]_-{\begin{pmatrix} 0 & +1 \\
+1 & 0
\end{pmatrix}}\ar[u]_-{\cong}^-{\mathrm{shear} =\begin{pmatrix} +1 & 0 \\
0 & -1
\end{pmatrix}} & C_2/e\times \R^{1,1}\ar[u]^-{\cong}_-{\mathrm{shear} =\begin{pmatrix} +1 & 0 \\
0 & -1
\end{pmatrix}.}
}\]
Here and in all the examples below, we have  ignored the shift by $-1$ in $\gamma$. The shift was used to place it in degree zero and is not important for this part of the computation. So, in reality, this commutative diagram is for $\gamma_{\R^{1,1}}$ rather than $\gamma =\gamma_{\R^{1,1}}-1 $.

Now we can compute $\uZ(\Gamma_\gamma^{-1}(\delta))=\uZ(\res(\id,\omega)\circ\tr(\rho,c)) = 2$ and we get the cochain complex
\[0\rightarrow \Z\{b^*\}\xrightarrow{\, 2\, } \Z\{b_1^*\}\rightarrow 0.\]
Finally, we compute cohomology
\[H^{\gamma+n}(D(\R^{1,1});\uZ)=\begin{cases}\Z/2 & n=1\\0 & \text{otherwise.}\end{cases}\]
We note that this agrees with the Bredon cohomology $H^{n,1}(\pt ;\uZ)$, as expected.

Similarly, in $\underline{\F}_2$-coefficients  $\underline{\F}_2(\Gamma_\gamma^{-1}(\delta))=\underline{\F}_2(\res(\id,\omega)\circ\tr(\rho,c))=0$. 
So the cellular cochain complex is 
\[0\rightarrow \F_2\{b^*\}\xrightarrow{\, 0\, } \F_2\{b_1^*\}\rightarrow 0,\]
with cohomology
\[H^{\gamma+n}(D(\R^{1,1});\underline{\F}_2)=\begin{cases}\F_2 & n=0, 1\\0 & \text{otherwise.}\end{cases}\]

\bigskip

Taking $B = D(\R^{q,q})$ and $\gamma = \R^{q,q} - q$, one could use similar techniques to compute Bredon cohomology $H^{n,q}(\pt ;\uZ)$ or $H^{n,q}(\pt ; \underline{\F}_2)$.

\subsection{A $C_2$-equivariant circle}
We compute parametrized cohomology for the $C_2$-space $S^{1,1}$, the one point compactification of $\R^{1,1}$.  We first compute cohomology in a few $RO(C_2)$-degrees and then give an example in the extended grading.
\subsubsection{$G=C_2$, $B=S^{1,1}$, and $\gamma =0$}
There is a $\CWg$-structure on $S^{1,1}$ consisting of two $\pars{\gamma + 0}$-cells
\begin{align*}
    b_0\colon C_2/C_2\rightarrow S^{1,1}\\
    b_1\colon C_2/C_2\rightarrow S^{1,1}
\end{align*}
and one $\pars{\gamma+1}$-cell
\[
b\colon C_2 \times_e D(\R^1)\rightarrow S^{1,1}
\]
as depicted in \cref{fig:gamma=0}.
The components of the connecting homomorphism are given by
$\delta^{b,b_1}=\res_\gamma(\rho,\omega_1)$ and $\delta^{b,b_0}=-\res_\gamma(\rho,\omega_0)$, with $\omega_1$ and $\omega_0$ as shown in \cref{fig:gamma=0}. Applying $\uZ(\Gamma_\gamma^{-1}(-))$, we get the following cochain complex
\[0\rightarrow \Z\{b_1^*,b_0^*\}\xrightarrow{\begin{pmatrix}
    1& -1
\end{pmatrix}}\Z\{b^*\}\rightarrow 0.\]
Thus the cohomology is
\[H^{\gamma+n}(S^{1,1};\uZ)=\begin{cases}
    \Z& n=0\\
    0& \text{otherwise,}
\end{cases}\]
which again, agrees with Bredon cohomology.

Similarly, in $\underline{\F}_2$-coefficients
\[0\rightarrow \F_2\{b_1^*,b_0^*\}\xrightarrow{\begin{pmatrix}
    1& 1
\end{pmatrix}}\F_2\{b^*\}\rightarrow 0,\]
gives cohomology 
\[H^{\gamma+n}(S^{1,1};\underline{\F}_2)=\begin{cases}
    \F_2 & n=0\\
    0& \text{otherwise.}
\end{cases}\]

\begin{figure}
    \centering
\includegraphics[width=0.5\textwidth]{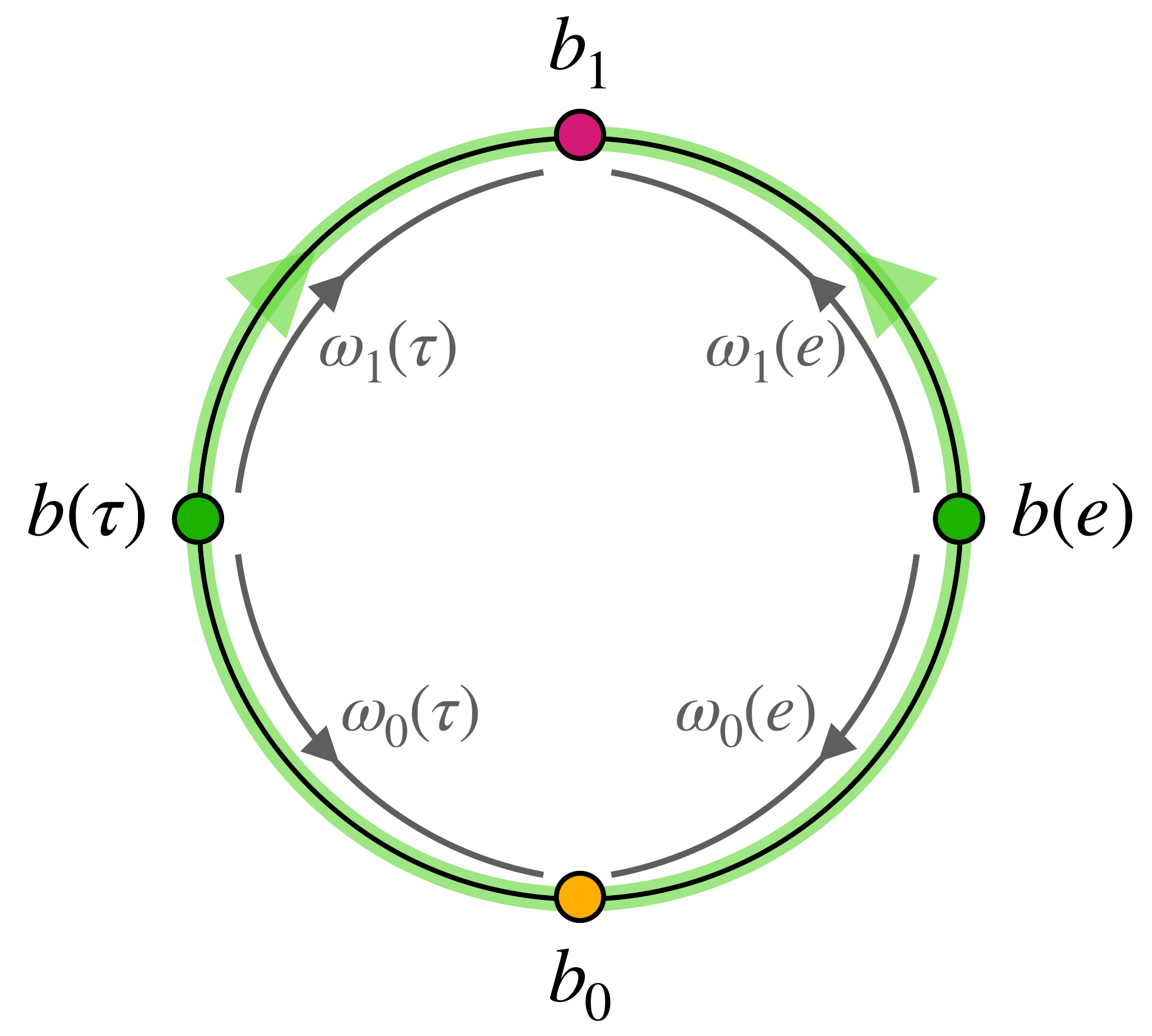}
    \caption{$\CWg$-structure for $S^{1,1}$ with $\gamma=0$}
    \label{fig:gamma=0}
\end{figure}

\bigskip

\subsubsection{$G=C_2$, $B=S^{1,1}$, and $\gamma =\R^{1,1}-1$}
There is a $\CWg$-structure on $S^{1,1}$ consisting of one $\pars{\gamma+0}$-cell
\[
b\colon C_2/e\rightarrow S^{1,1}
\]
and two $\pars{\gamma+1}$-cells
\begin{align*}
    b_0\colon C_2 \times_{C_2} D(\R^{1,1})\rightarrow S^{1,1}\\
     b_1\colon C_2 \times_{C_2} D(\R^{1,1})\rightarrow S^{1,1},
\end{align*}
with orientations as shown in \cref{fig:gamma=sigma-1}.

The nontrivial components of the connecting homomorphism are given by
\[\delta^{b_1,b}=\res_\gamma(\id,\omega_1)\circ \tr_{\gamma}(\rho,c)\]
and
\[\delta^{b_0,b}=\res_\gamma(\id,\omega_0)\circ \tr_{\gamma}(\rho,c),\]
where by abuse of notation $c$ is the constant path at $b_1$ and at $b_0$ respectively, $\omega_1$ and $\omega_0$ are as in \cref{fig:gamma=sigma-1} and $\rho\colon C_2/e\rightarrow C_2/C_2$ is again the quotient.
\begin{figure}
    \centering
    \includegraphics[width=0.5\textwidth]{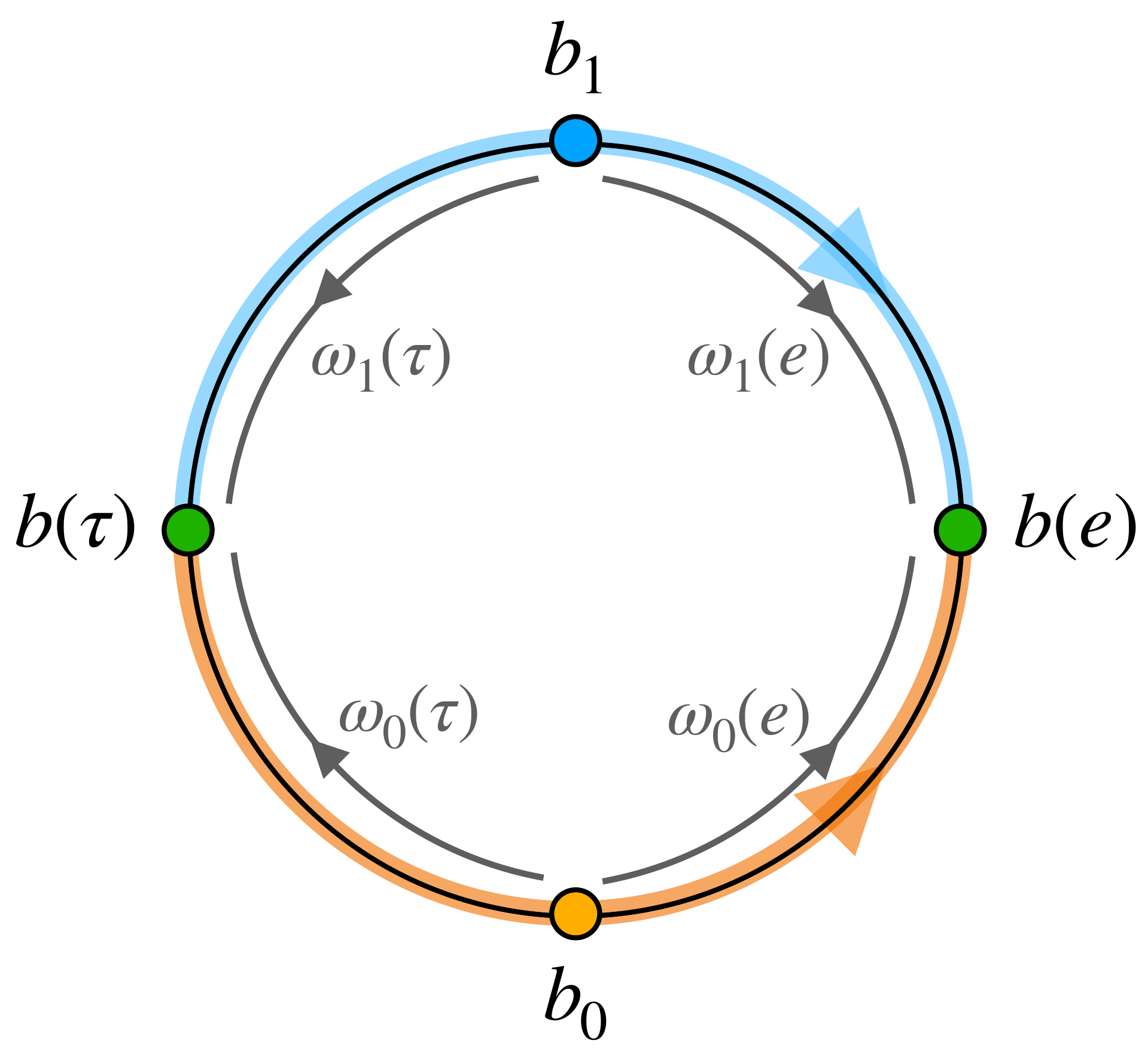}
    \caption{$\CWg$-structure for $S^{1,1}$ with $\gamma= \R^{1,1}-1$}
    \label{fig:gamma=sigma-1}
\end{figure}

So $\uZ(\Gamma_\gamma^{-1}(\delta^{b_0,b}))=\uZ(\Gamma_\gamma^{-1}(\delta^{b_1,b}))=2$
and we get the following cochain complex
\[0\rightarrow \Z\{b^*\}\xrightarrow{\begin{pmatrix}2\\2\end{pmatrix}}\Z\{b_0^*,b_1^*\}\rightarrow 0.\]
Taking cohomology, we get
\[H^{\gamma+n}(S^{1,1};\uZ)=\begin{cases}
    \Z\oplus \Z/2 & n=1\\
    0& \text{otherwise.}
\end{cases}\]
Again, this agrees with Bredon cohomology by the suspension isomorphism, since
\begin{align*} 
H^{\gamma+n}(S^{1,1} ; \uZ) &\cong H^{n,1}(S^{1,1} ; \uZ) \\
&\cong \widetilde{H}^{n,1}(S^{1,1} ; \uZ) \oplus  H^{n,1}(\pt ; \uZ) \\
&\cong  {H}^{n-1,0}(\pt; \uZ) \oplus  H^{n,1}(\pt;\uZ).  
\end{align*}

In $\underline{\F}_2$-coefficients, we have 
\[0\rightarrow \F_2\{b^*\}\xrightarrow{\begin{pmatrix}0\\0\end{pmatrix}}\F_2\{b_0^*,b_1^*\}\rightarrow 0,\]
with cohomology
\[H^{\gamma+n}(S^{1,1};\underline{\F}_2)=\begin{cases}
    \F_2  & n=0\\
\F_2 \oplus \F_2 & n=1\\
    0& \text{otherwise.}
\end{cases}\]

\bigskip

\subsubsection{$G=C_2$, $B=S^{1,1}$, and $\gamma =\taut-1$}\label{comp:S11taut} This is our first example beyond the $RO(G)$-grading. 
Recall from \cref{lem:ROpiS11} that $\RO(\Pi S^{1,1}) \cong \Z^3$, and from \cref{ex:tautS11} that
\[ \taut=(1,0,1) \in \RO(\Pi S^{1,1})\]
is the representation defined by tautological line bundle $\taut$. 
Let \[\gamma=\taut-1=(0,0,1)\in \RO(\Pi S^{1,1}).\]
We compute $H^{\gamma+*}(S^{1,1};\underline{\Z})$ and $H^{\gamma+*}(S^{1,1};\underline{\F}_2)$ with $*\in \Z$.

In \cref{rem:easiercellstructure}, we will present the cohomology computation using a simpler cell structure. We first use a more complicated cell structure to better illustrate computational techniques in the extended grading.

Recall the $\CWg$-structure on $S^{1,1}$ from \cref{ex:cellS11} with two $(\gamma+0)$-cells 
\begin{align*}
b \colon C_2/e \to S^{1,1}\\
b_0 \colon C_2/C_2 \to S^{1,1}
\end{align*}
and two $(\gamma+1)$-cells
\begin{align*}
b_1 \colon C_2\times_{C_2} D(\R^{1,1}) \to S^{1,1}\\
b_2 \colon C_2\times_e D(\R^1) \to S^{1,1}
\end{align*}
with orientations as depicted on the left in \cref{fig:CWS11}.

With the cell structure above, the cochain complex is of the form
\[0\rightarrow \Z\{b_0^*,b^*\}\xrightarrow{\uZ(\Gamma^{-1}_\gamma(\delta))}\Z\{b_1^*,b_2^*\}\rightarrow 0\]
where $\delta=\begin{pmatrix}
    \delta^{b_1,b_0}& \delta^{b_2,b_0}\\ \delta^{b_1,b}& \delta^{b_2,b}
\end{pmatrix}$
is the connecting homomorphism.

As usual, we have the quotient $\rho \colon C_2/e \to C_2/C_2$ and $c$ denotes a constant path.  We write $\omega_{b_1,b}\colon C_2/e\times I\rightarrow S^{1,1}$ for the path whose restriction to $e\times I$ is the shortest path from $b_1(C_2/C_2)$ to $b(e)$, and denote this restriction by $\omega_{b_1,b}(e)$.  We use similar notation for the paths shown in \cref{fig:CWS11}.
\begin{figure}[b]
\includegraphics[width=\textwidth]{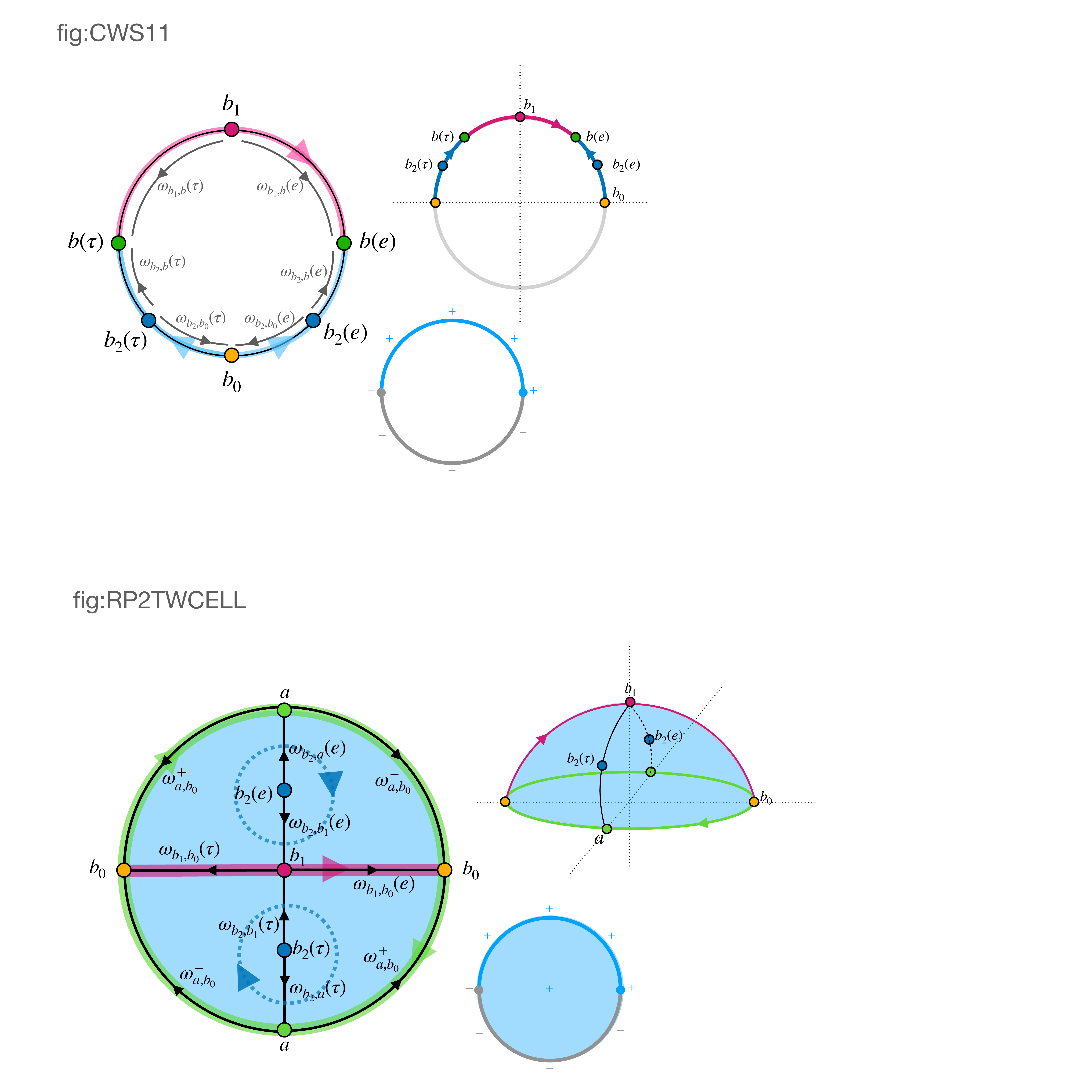}
    \caption{$\CWg$-structure for $S^{1,1}$ with $\gamma=\taut-1$}
    \label{fig:CWS11}
\end{figure}

One feature of $\taut$ is that it comes from an actual vector bundle and is not naturally in the standard form of a representation.  Thus computing the relevant signs for paths not in the skeleton of $\Pi S^{1,1}$ requires a bit more care.

The upper right picture in Figure \ref{fig:CWS11} depicts $\R^{2,1}$ and shows $S^{1,1}$ as $\mathbb{P}(\R^{2,1})$. We use this picture to understand what the representation $\gamma=\taut-1$ (or equivalently $\taut$) assigns to a morphism in $\Pi S^{1,1}$. The fibers of $\taut$ are the lines in $\R^{2,1}$ through the origin. We make an explicit choice of basis/orientation for each of these lines. We take the picture shown on the upper right and decorate it in the bottom picture to depict the orientation we have chosen, orienting most lines with the basis element in the upper half plane. This is shown in the bottom picture: the basis element on the line through a point in the circle is marked by a $+$. 
The $-$ means that this is $-1$ times the basis element. 

Now when we have a morphism $(\alpha, \omega)$ in $\Pi S^{1,1}$, 
the path $\omega(e)$ in $S^1$ lifts uniquely to a path on the circle in the bottom picture.\footnote{Uniqueness is because the restriction of the tautological line bundle to the unit vectors is a covering map.} The monodromy of $\omega(e)$ is $+1$ if the signs of the start and endpoint of $\omega(e)$ agree and $-1$ if not. For example if $\omega(e)$ is the nontrivial loop at $b_0$, then we follow the half loop in the bottom picture, which has sign $-1$.  It is always possible to choose a starting point for $\omega(e)$ with a positive basis element $+1$.
By definition of the representation $\gamma = \dim \taut-1$, the sign computed in this manner is exactly the degree of the map
\[\gamma_0(\alpha,\omega) \colon \gamma_0(x) \to \gamma_0(y),\]
on the fiber $\gamma_0(x)$ over the identity coset. 
The degree on the other fibers is determined by equivariance.\footnote{The signs on the other fibers should be computed using this equivariance. Path lifting for the other fibers is not the correct approach and will lead to sign mistakes as we will see in \cref{rem:infoextra}.}

As before, we compute $i_e^*\delta^{x,y}$ for each component $\delta^{x,y}$ of $\delta$ to find $f^{x,y}$ such that $\Gamma_\gamma(f^{x,y})=\delta^{x,y}$. 
We have
\begin{align*}
i^*_e \delta^{b_1,b} &= \begin{pmatrix} \omega_{b_1,b}(e) \\ -\omega_{b_1,b}(\tau)
\end{pmatrix} 
\end{align*}
The signs here are just determined by the degree of the attaching map.
Below, we compute
\begin{align*}
i^*_e\res_\gamma(\id,\omega) &= \begin{pmatrix} \omega_{b_1,b}(e)  & 0 \\  0 &\omega_{b_1,b}(\tau)
\end{pmatrix} & 
i^*_e\tr_{\gamma}(\rho,c) &=
\begin{pmatrix} c \\ -c
\end{pmatrix}
\end{align*}
It will follow that $i_e^*\delta^{b_1,b}=i_e^*\res_\gamma(\id,\omega_{b_1,b})\circ i_e^*\tr_\gamma(\rho,c)$. We note that although the restriction of $(\id,\omega)$ has no signs, it is the shearing in the transfer that puts in the signs required to give the right answer for $\delta^{b_1,b}$.

The sign for the transfer comes from the fact that $\gamma_0(b_1) = \R^{1,1}$ so the shearing in the transfer puts a $-1$ on the $\tau$-sphere.

For the restriction, we have to compute the map $\gamma(\id,\omega)$. We will get
\[\xymatrix@C=5pc{
\gamma(b_1\rho)\ar@{=}[d] \ar[r]^{\gamma(\id,\omega_{b_1,b_0})} & \gamma(b) \ar@{=}[d]  \\
C_2\times_e \R^1  \ar@{-->}[r]^-{\begin{pmatrix} +1 & 0 \\ 0 & +1
\end{pmatrix}} & C_2\times_e \R^1}\] 
where again we abused notation with matrix notation as described in \cref{rem:matrixrem} and for the moment consider $\gamma = \taut$ rather than $\taut -1$.
To compute the dashed arrow in the diagram above, we use path lifting for $e$-fiber and equivariance for the $\tau$-fiber. We lift $\omega_{b_1,b}(e)$ to the unit sphere and note that this path has monodromy $+1$, as it travels through the blue region. So the degree on the $e$-fiber is $+1$. The degree on the $\tau$-fiber is determined by equivariance and so is the same, $+1$.

We also have
\begin{align*}
i^*_e \delta^{b_2,b_0} &= \begin{pmatrix} -\omega_{b_2,b_0}(e) & -\omega_{b_2,b_0}(\tau)
\end{pmatrix} 
\end{align*}
and we claim that this is equal to 
\[
i^*_e \delta^{b_2,b_0} = -i_e^*\res_\gamma(\rho,\omega_{b_2,b_0}).
\]
To see this we need to argue that $\gamma(\id,\omega_{b_2,b_0})$ can be identified with the dashed arrow in the following diagram.
\[\xymatrix@C=5pc{
\gamma(b_2)\ar@{=}[d] \ar[r]^{\gamma(\id, \omega_{b_2,b_0} )} & \gamma(b_0) \ar@{=}[d]  \\
C_2\times_e \R^1  \ar@{-->}[r]^-{\begin{pmatrix} +1 & +1 
\end{pmatrix}} &  \R^{1,0}}\]
The degree on the $e$-fiber is obtained by looking at the monodromy of the path $\omega_{b_2,b_0}(e)$.  This is $+1$ according to our choices of orientations for the fibers, since the obvious lift for $\omega(e)$ travels only within the positive (blue) region. Therefore, by equivariance, 
the degree for the $\tau$-fiber is also $+1$.
\begin{warn}
\label{rem:infoextra}
The path $\omega_{b_2,b_0}(\tau)$ has monodromy $-1$, which is not what we get by equivariance in the computation above. This shows that the latter monodromy is not relevant for this computation! 
 The reason we follow the $e$-fiber instead of the $\tau$-fiber is explained in \cref{rem:dimchoices}, and has to do with our definition of $\dim(\taut)$.
\end{warn}

Finally, it is not hard to see that 
\[i_e^*\delta^{b_2,b}=\begin{pmatrix}
    \omega_{b_2,b}(e)&0\\
    0& \omega_{b_2,b}(\tau)
\end{pmatrix}=i_e^*\res_\gamma(\id,\omega_{b_2,b})\]
and that $\delta^{b_1,b_0}=0$.

So we showed that each component $\delta^{x,y}$ of $\delta$ is given by
\begin{itemize}
    \item $\delta^{b_1,b_0}=0$,
    \item $\delta^{b_1,b}=\res_\gamma(\id, \omega_{b_1,b})\circ\tr_\gamma(\rho,c)$,
    \item $\delta^{b_2,b_0}=-\res_\gamma(\rho, \omega_{b_2,b_0})$, and 
    \item $\delta^{b_2,b}=\res_\gamma(\id,\omega_{b_2,b})$. 
\end{itemize}

Applying $\uZ\circ \Gamma_\gamma^{-1}$ gives
\begin{itemize}
    \item $\uZ\pars{\Gamma_\gamma^{-1}\pars{\delta^{b_1,b_0}}} = 0$,
    \item $\uZ\pars{\Gamma_\gamma^{-1}\pars{\delta^{b_1,b}}}=2$,
    \item $\uZ\pars{\Gamma_\gamma^{-1}\pars{\delta^{b_2,b_0}}}=-1$, and
    \item $\uZ\pars{\Gamma_\gamma^{-1}\pars{\delta^{b_2,b}}}=1$,
\end{itemize}
and the cochain complex becomes
\[
0\rightarrow \Z\{b_0^*,b^*\}\xrightarrow{\begin{pmatrix}
    0& 2\\
   -1& 1
\end{pmatrix}}\Z\{b_1^*,b_2^*\}\rightarrow 0.
\]
The cohomology is then
\[H^{\gamma+n}(S^{1,1};\uZ)=\begin{cases}
    \Z/2 & n=1\\
    0& \text{otherwise.}
\end{cases}\]
Now, of course, there is no comparison with Bredon cohomology.  This lies entirely within the extended grading.

In $\underline{\F}_2$-coefficients, we have 
\[
0\rightarrow \F_2\{b_0^*,b^*\}\xrightarrow{\begin{pmatrix}
    0& 0\\
   -1& 1
\end{pmatrix}}\F_2\{b_1^*,b_2^*\}\rightarrow 0
\]
and cohomology
\[H^{\gamma+n}(S^{1,1};\underline{\F}_2)=\begin{cases}
    \F_2  & n=0, 1\\
    0& \text{otherwise.}
\end{cases}\]

\begin{rem}
    The nonzero class in degree $n=1$ above, corresponding to $H^\taut(S^{1,1};\underline{\F}_2) \cong \F_2$, is the Thom class of the tautological bundle $u_\taut$.  As observed in \cite[Example 3.4]{Hazel_fund}, this Thom class does not exist in $\RO(C_2)$-graded Bredon cohomology with $\underline{\F}_2$-coefficients. The extended grading is required to capture both Thom classes and the Thom isomorphism for arbitrary vector bundles.  Moreover, it exhibits Thom isomorphisms for any Mackey functor coefficients.  From the explicit cochain complex calculation above, we observe that we can identify $u_\taut$ with $b_1^*$. 
\end{rem}

\begin{rem}
\label{rem:easiercellstructure}
Alternatively, to compute the cohomology of $S^{1,1}$ in these same degrees, one could have used a simpler $\CWg$-structure on $S^{1,1}$. 
 For example, the one from the end of \cref{ex:cellS11} consisting of a single $\pars{\gamma+0}$-cell:
 \[b_0\colon  C_2/C_2\rightarrow S^{1,1}\]
 and a single $\pars{\gamma+1}$-cell:
 \[b_1\colon C_2 \times_{C_2} D(\R^{1,1})\rightarrow S^{1,1}\] 
oriented as shown in \cref{fig:simplerCW}. 

\begin{figure}[h]
    \includegraphics[width=\textwidth]{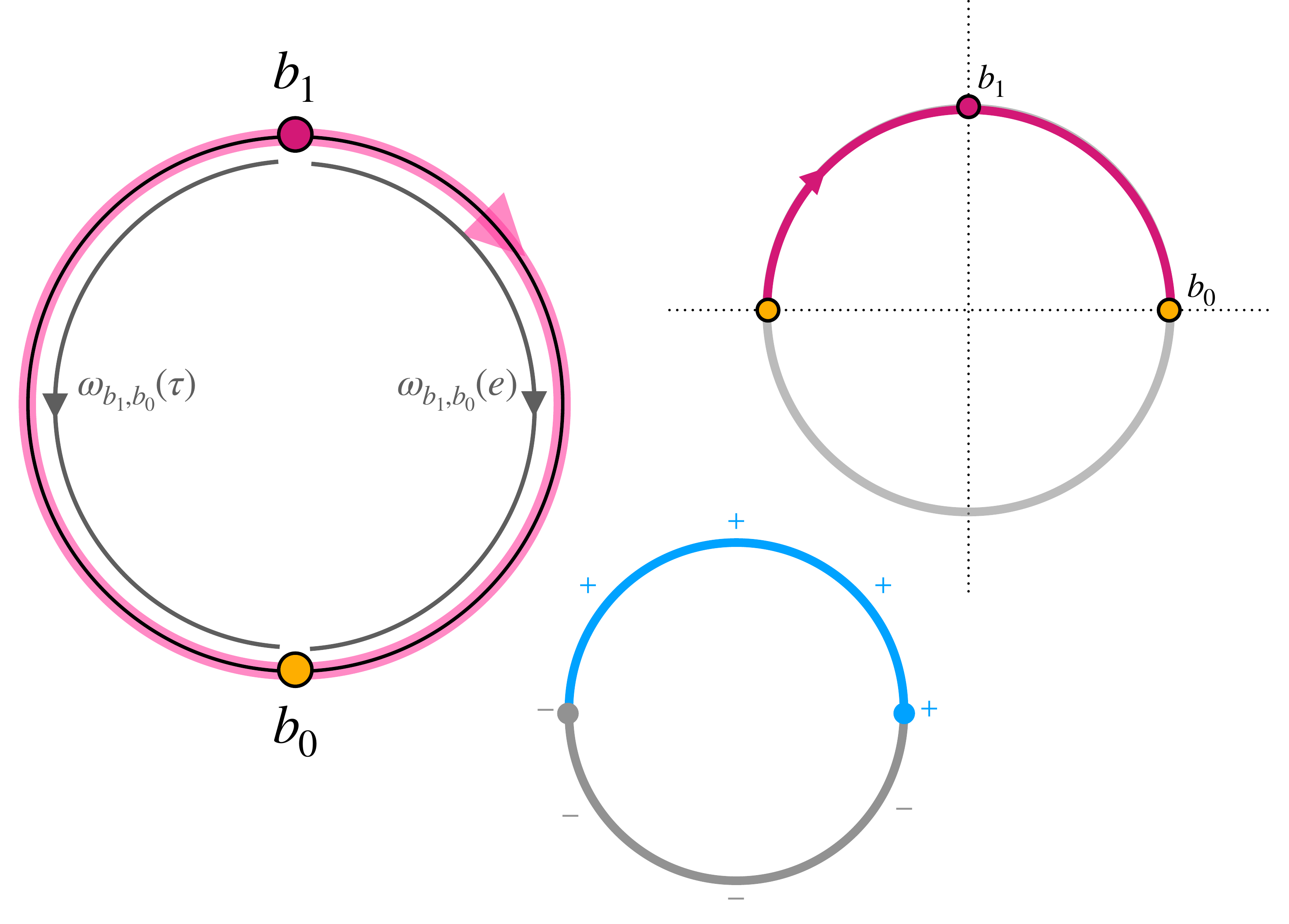}
    \caption{Simpler $\CWg$-structure}
    \label{fig:simplerCW}
\end{figure}
 
 Then the connecting homomorphism is 
\[\delta=\res_\gamma(\rho, \omega_{b_1,b_0})\circ\tr_\gamma(\rho,c),\]
where again $\rho\colon C_2/e\rightarrow C_2/C_2$ is the quotient map and $c$ is the constant path at $b_1$,
and we get the following cochain complexes with $\uZ$-coefficients:
\[0\rightarrow \Z\{b_0^*\}\xrightarrow{2}\Z\{b_1^*\}\rightarrow 0\]
and with $\underline{\F}_2$-coefficients:
\[0\rightarrow \F_2\{b_0^*\}\xrightarrow{0}\F_2\{b_1^*\}\rightarrow 0.\]
Taking cohomology agrees with the computations above.
\end{rem}

\bigskip

\subsubsection{$G=C_2$, $B=S^{1,1}$, and $\gamma=\chi\taut-1$}
For completeness, we also compute $H^{\gamma+*}(S^{1,1};\uZ)$ and $H^{\gamma+*}(S^{1,1};\underline{\F}_2)$ with $\gamma=\chi\taut-1=(0,1,0)\in \RO \Pi S^{1,1}$, where $\chi$ denotes $C_2$-action on $\RO(\Pi S^{1,1})$ induced by reflecting $S^{1,1}$ across a horizontal line through $b$.

We compute the cohomology using a simple $\CWg$-structure, analogous to the one in \cref{rem:easiercellstructure}.  Consider a cell structure consisting of a single $\pars{\gamma+0}$-cell:
 \[b_1\colon  C_2/C_2\rightarrow S^{1,1}\]
 and a single $\pars{\gamma+1}$-cell:
 \[b_0\colon C_2 \times_{C_2} D(\R^{1,1})\rightarrow S^{1,1}.\]
This yields the analogous cochain complex but with the roles of $b_0$ and $b_1$ swapped, and so we get
\[H^{\gamma+n}(S^{1,1};\uZ)=\begin{cases}
    \Z/2 & n=1\\
    0& \text{otherwise}
\end{cases}\]
and
\[H^{\gamma+n}(S^{1,1};\underline{\F}_2)=\begin{cases}
    \F_2  & n=0, 1\\
    0& \text{otherwise.}
\end{cases}\]
In this way, we can identify the Thom class $u_{\chi \taut}$ with $b_0^*$.

\bigskip

\subsection{A $C_2$-equivariant real projective space}
We give one more example for a $C_2$-space, the so-called \emph{$\R P^2$-twist}.
\subsubsection{$G=C_2$, $B=\RPtwist$, and $\gamma =\taut-1$}
Recall from \cref{lem:ROpiRP2} that  $RO(\Pi \RPtwist) \cong \Z^3 \times \Z/2$ and let
\[\taut=(1,0,1,0)\in RO(\Pi \RPtwist)\]
be the representation defined by the tautological bundle on $\RPtwist$. As usual, let
\[
\gamma=\taut-1 = (0,0,1,0) \in RO(\Pi \RPtwist)
\]
so that $\gamma$ has virtual dimension zero.
We compute  $H^{\gamma+*}(\RPtwist ;\uZ)$ and $H^{\gamma+*}(\RPtwist ;\underline{\F}_2)$ with $*\in \Z$.

There is a $\CWg$-structure on $\RPtwist$, described in Example 3.1.2(3) of \cite{CW_book}, consisting of the following:\\
one $\pars{\gamma+0}$-cell:
\[
b_0 \colon C_2/C_2\to \RPtwist,
\]
two $\pars{\gamma+1}$-cells:
\begin{align*}
    a&\colon C_2 \times_{C_2} D(\R^{1,0})\to \RPtwist\\
b_1&\colon C_2 \times_{C_2} D(\R^{1,1})\to \RPtwist,
\end{align*}
and one $\pars{\gamma+2}$-cell:
\[
b_2\colon C_2 \times_e D(\R^{2})\to \RPtwist,
\]
oriented as shown on the left in  \cref{fig:RP2TWCELL}.  In this depiction, the $C_2$-action is given by rotation about the center so that the outer circle and the isolated point $b_1$ are fixed.  In the upper right, we consider $\mathbb{R}P^2$ as $\RPtwist$.  Flattening this picture, the bottom of \cref{fig:RP2TWCELL} depicts our choices for the basis elements/orientations in $\taut$.  See \cref{comp:S11taut} for further explanation of the signs.

\begin{figure}
    \centering
    \includegraphics[width=\textwidth]{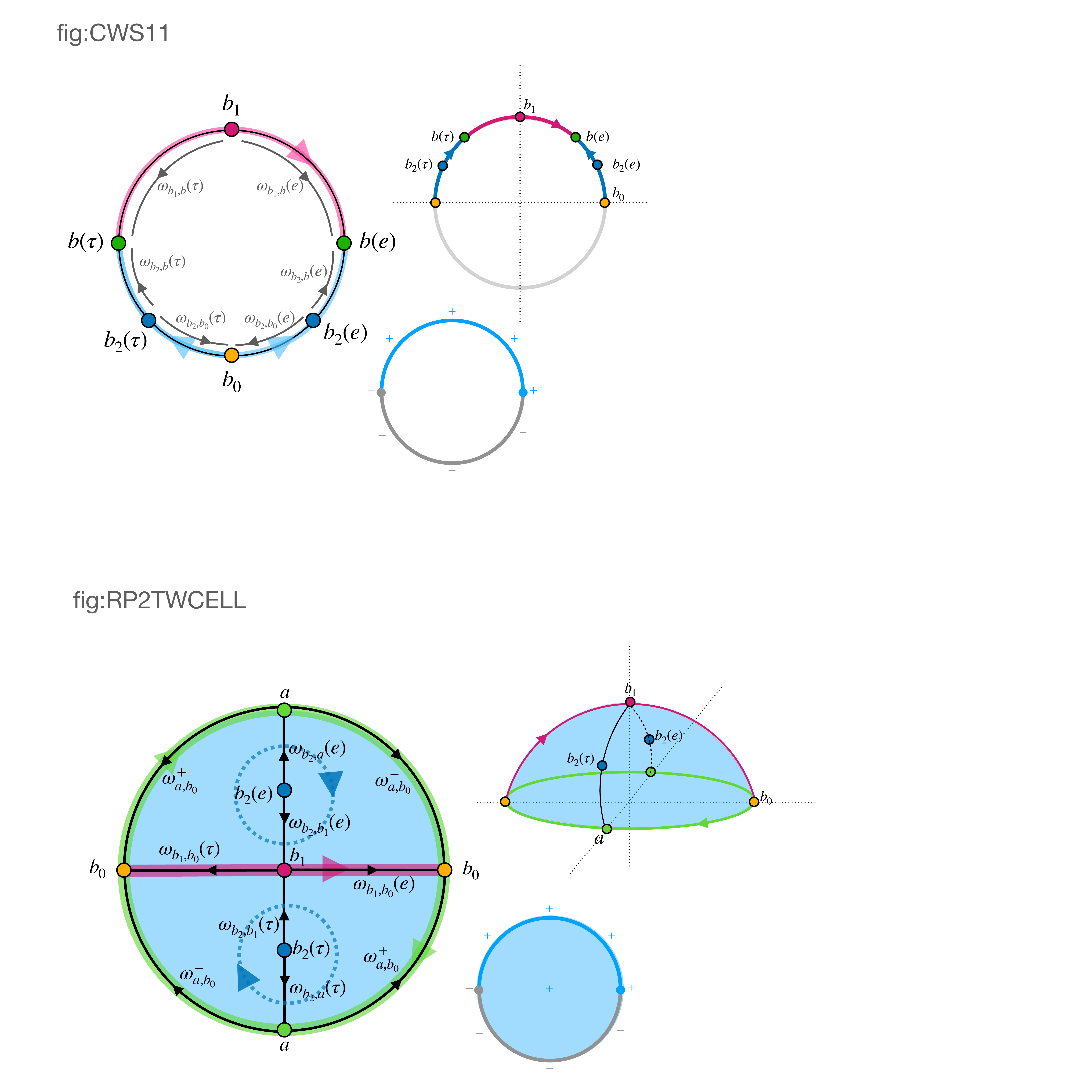}
    \caption{$\CWg$-structure for $\RPtwist$ with $\gamma=\taut-1$}
    \label{fig:RP2TWCELL}
\end{figure}

We have to compute the coboundary of the cochain complex
\[0\rightarrow\Z\{b_0^*\}\xrightarrow{\Z(\Gamma_\gamma^{-1}(\delta_{1+\gamma}))}\Z\{a^*,b_1^*\}\xrightarrow{\Z(\Gamma_\gamma^{-1}(\delta_{2+\gamma}))}\Z\{b_2^*\}\rightarrow 0.\]
The components of the connecting homomorphisms are given by 
\begin{itemize}
\item $\delta^{a,b_0}=\res_\gamma(\id,\omega_{a,b_0}^-) +\res_\gamma( \id,\omega_{a,b_0}^+)$, 
\item $\delta^{b_1,b_0}=\res_\gamma(\rho,\omega_{b_1,b_0})\circ \tr_\gamma(\rho,c)$, 
 
    \item $\delta^{b_2,a}=\res_\gamma(\rho, \omega_{b_2,a})$, and
    \item $\delta^{b_2,b_1}=-\res_\gamma (\rho,\omega_{b_2,b_1})$.
\end{itemize}

We explain our answer for $\delta^{a,b_0}$ since we found this one to be a possible source of confusion. We have
\begin{align*}
i^*_e\delta^{a,b_0} &= \begin{pmatrix} \omega_{a,b_0}^{-} - \omega_{a,b_0}^+
\end{pmatrix} \\
&= i^*_e \res_\gamma(\id, \omega_{a,b_0}^{-})+ i^*_e \res_\gamma(\id, \omega_{a,b_0}^{+}).
\end{align*}
To explain the signs, note that the path $\omega_{a,b_0}^{-}$ in the fixed-set has no monodromy: the lift which starts at a positive basis element travels entirely through the positive (blue) region. However, $\omega_{a,b_0}^{+}$ has monodromy, and this accounts for the change in sign from the first equality to the second.

Since this is the first two-cell we compute with, we also explain our answer for $\delta^{b_2,a}$ and $\delta^{b_2,b_1}$. We have
\begin{align*}
i^*_e\delta^{b_2,a} = \begin{pmatrix}
+\omega_{b_2,a}(e) & +\omega_{b_2,a}(\tau)
\end{pmatrix} = i^*_e\res_\gamma(\rho, \omega_{b_2,a}).
\end{align*}
The positive signs in the first matrix come from the fact that the orientation of the two (blue) 2-cells with center $b_2(e)$ and $b_2(\tau)$ agrees with the orientation of 1-cell with center $a$ (green). For the signs in the second part of the equation, we compute
\[\xymatrix@C=5pc{
\gamma(b_2)\ar@{=}[d] \ar[r]^{\gamma(\rho, \omega_{b_2,a})} & \gamma(a) \ar@{=}[d]  \\
C_2\times_e \R^1  \ar@{-->}[r]^-{\begin{pmatrix} +1 & +1 
\end{pmatrix}} &  \R^{1,0}}\]
For the $e$-fiber, using the positive basis as starting point, we use the fact that the path $\omega_{b_2,a}(e)$ lifts to a unique path in the two sphere that only travels within the positive (blue) region. So the degree of $\gamma(\rho, \omega_{b_2,a})$ on the $e$-fiber is $+1$. For the $\tau$-fiber, equivariance and the fact that $C_2$ acts trivially on $\R^{1,0}$ gives $+1$ as well.

For $\delta^{b_2,b_1}$, we have 
\begin{align*}
i^*_e\delta^{b_2,b_1} = \begin{pmatrix}
-\omega_{b_2,b_1}(e) & +\omega_{b_2,b_1}(\tau)
\end{pmatrix} = -i^*_e\res_\gamma(\rho, \omega_{b_2,b_1}),
\end{align*}
where now the signs in the first matrix come from the geometric attaching maps, and the choice of sign in the second equality
is determined by the following computation
\[\xymatrix@C=5pc{
\gamma(b_2)\ar@{=}[d] \ar[r]^{\gamma(\rho,\omega_{b_2,b_1})} & \gamma(b_1) \ar@{=}[d]  \\
C_2\times_e \R^1  \ar@{-->}[r]^-{\begin{pmatrix} +1 & -1 
\end{pmatrix}} &  \R^{1,1}}\]
The path $\omega_{b_2,b_1}(e)$ travels through the blue region and so has positive monodromy. Hence the degree of the map on the $e$-fiber is $+1$. However, the action of $C_2$ on $\R^{1,1}$ reverses orientation and so implies that the degree on the $\tau$-fiber is $-1$.

So we have verified
\begin{itemize}
\item $\delta^{a,b_0}=\res_\gamma(\id,\omega_{a,b_0}^-) +\res_\gamma( \id,\omega_{a,b_0}^+)$, 
\item $\delta^{b_1,b_0}=\res_\gamma(\rho,\omega_{b_1,b_0})\circ \tr_\gamma(\rho,c)$, 
    \item $\delta^{b_2,a}=\res_\gamma(\rho, \omega_{b_2,a})$, and
    \item $\delta^{b_2,b_1}=-\res_\gamma (\rho,\omega_{b_2,b_1})$.
\end{itemize}
Finally, we get 
\[0\rightarrow\Z\{b_0^*\}\xrightarrow{\begin{pmatrix}
    2\\2
\end{pmatrix}}\Z\{a^*,b_1^*\}\xrightarrow{\begin{pmatrix}
    1&-1
\end{pmatrix}}\Z\{b_2^*\}\rightarrow 0\]
which yields the following cohomology groups
\[H^{\gamma+n}(\mathbb{P}(\R^{3,1});\uZ)=\begin{cases}
    \Z/2& n=1\\
    0&\text{otherwise.}
\end{cases}\]

Taking $\underline{\F}_2$-coefficients, the cochain complex is given by
\[0\rightarrow\F_2\{b_0^*\}\xrightarrow{\begin{pmatrix}
    0\\0
\end{pmatrix}}\F_2\{a^*,b_1^*\}\xrightarrow{\begin{pmatrix}
    1&1
\end{pmatrix}}\F_2\{b_2^*\}\rightarrow 0\]
and
\[H^{\gamma+n}(\mathbb{P}(\R^{3,1});\underline{\F}_2)=\begin{cases}
    \F_2& n=0,1\\
    0&\text{otherwise.}
\end{cases}\]

\subsection{A $C_4$-equivariant real projective space}

We finish by looking at one example in the case when $G=C_4=\langle \tau\rangle$.

\subsubsection{$G=C_4$, $B=\mathbb{P}(\lambda+1)$, and $\gamma=\taut-1$}\label{sec:exC4}
Let $\lambda$ be the $2$-dimensional representation of $C_4$ where the generator $\tau$ acts by rotation by $90$ degrees on $\R^2$. 
Let $B=\mathbb{P}(\lambda+1)$ and $\taut$ be the tautological bundle over $B$. We compute the parametrized equivariant cohomology of $B$ in degrees $\gamma+n$ for $\gamma=\taut-1$.

There is a $\CWg$-structure on $B$ consisting of one  $\pars{\gamma+0}$-cell:
\[
b_0\colon C_4/C_4\rightarrow B,
\]
one $\pars{\gamma+1}$-cell:
\[
b_1\colon C_4\times_{C_2}D({\R^{1,1}})\rightarrow B,
\]
and $\pars{\gamma+2}$-cell:
\[
b_2\colon C_4\times_{C_2}D(\R^{2,1})\rightarrow B,
\]
as shown on the left in \cref{fig:C4example}, and recalling our notation for $C_2$-representations.

\begin{figure}
    \includegraphics[width=\textwidth]{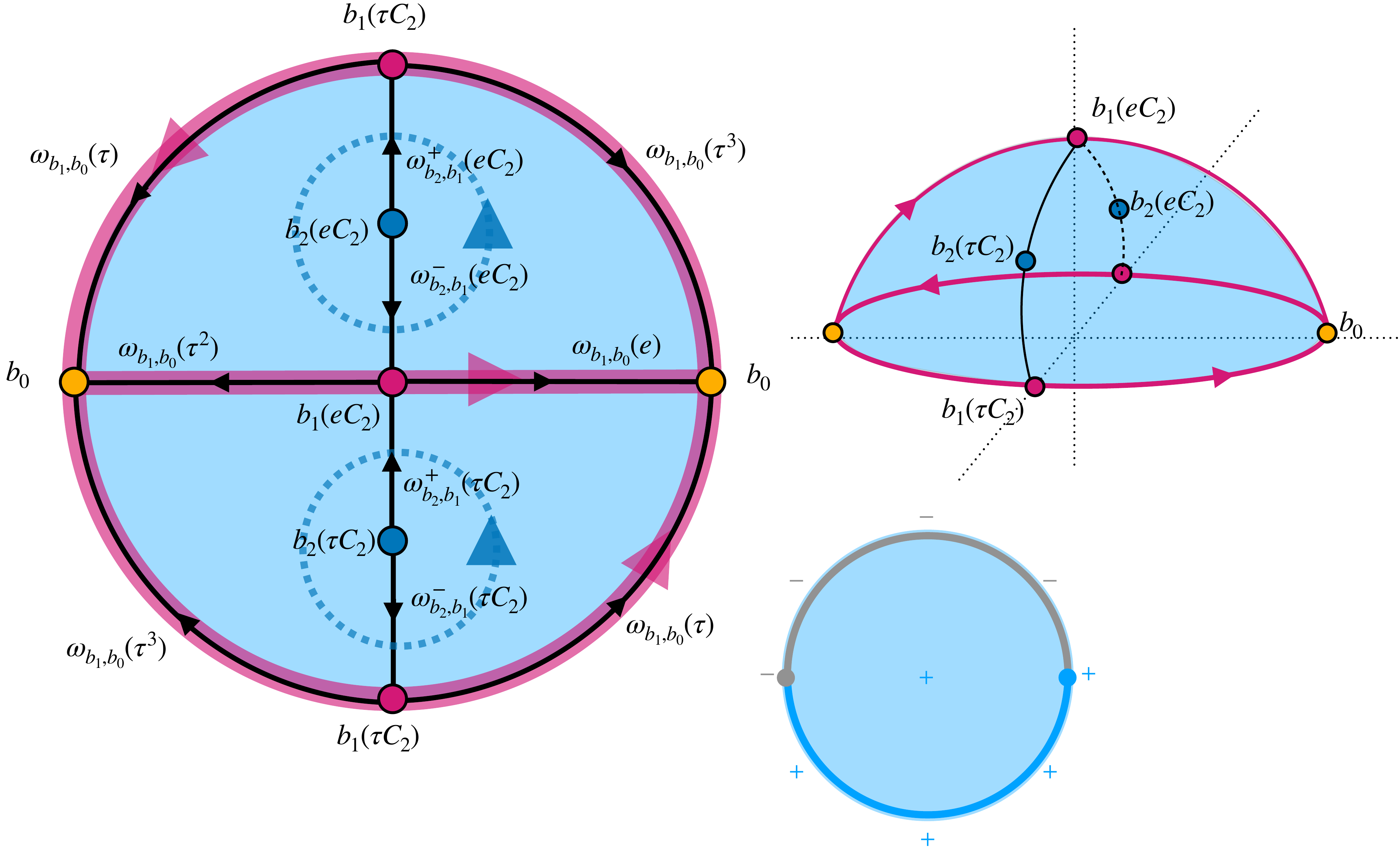}
    \caption{$\CWg$-structure for $\mathbb{P}(\lambda+1)$ with $\gamma=\taut-1$. In the upper right picture, the $C_4$-action rotates $\R^3$ around the axis containing $b_0$, rotating the north pole towards the front.}
    \label{fig:C4example}
\end{figure}

In order to be able to discuss degrees of maps coherently we need to fix coset representatives as well as a choice of basis for the fibers of $\gamma$.  For example, $\gamma(b_1)$ has two copies of $\R^{1,1}$.  To identify a basis for each copy, we need to choose coset representatives.  
If $|H|=h$, then we choose coset representatives for $C_4/H$ to be $e, \tau, \cdots, \tau^{h-1}$ and then identify
\[G/H \times \gamma_0(x) \xrightarrow{\cong} G\times_H \gamma_0(x)\]
via the map
\[(\tau^i H,v) \mapsto [\tau^i,v].\]
Then a choice of orientation for $\gamma_0(x)$ will orient each fiber of $G\times_H \gamma_0(x)$.
For example, if $H=C_2 =\langle \tau^2\rangle$ and $\gamma_0(x) = \R^{1,1}$, our coset representatives are $e,\tau$ and 
\begin{align*}
   ( eC_2,v) &\mapsto  [e,v] \\
    ( \tau C_2,v) &\mapsto  [\tau,v].
\end{align*}
If instead we had chosen $e,\tau^3$ as representatives, we would have had
\begin{align*}
   ( eC_2,v) &\mapsto  [e,v] \\
    ( \tau^3 C_2,v) &\mapsto  [\tau^3,v]=[\tau,\tau^2v]=[\tau,-v] = -[\tau,v].
\end{align*}
So we see that the different choices of coset representatives give different signs.

In the upper right, we consider $\mathbb{R}P^2$ as $\mathbb{P}(\lambda+1)$.  Flattening this picture, the bottom of \cref{fig:C4example} depicts our choices for the basis elements/orientations in the fibers of $\taut$.  See again \cref{comp:S11taut} for further explanation of the signs.

We are now ready to compute the connecting homomorphisms.  The first is given by
\[\delta^{b_1,b_0}= \res_\gamma(\rho, \omega_{b_1,b_0})\circ \tr_\gamma(\rho',c),\]
where $\rho'\colon C_4/e\rightarrow C_4/C_2$, $\rho \colon C_4/e \to C_4/C_4$ and $\omega_{b_1,b_0}$ is as in  \cref{fig:C4example}.
 We compute this as follows: 
Note that $i_e^*\delta^{b_1,b_0}$ consists of two morphisms in $\Pi i_e^*B$, namely one defined by a path from $b(eC_2)$ to $b_0$ and one from $b(\tau C_2)$ to  $b_0$. So we want to write $i_e^*\delta^{b_1,b_0}$ as a $1\times2$-matrix.
Observe that with this notation
\[i_e^*\delta^{b_1,b_0}=\begin{pmatrix}
    \omega_{b_1,b_0}(e)-\omega_{b_1,b_0}(\tau^2)& \omega_{b_1,b_0}(\tau)-\omega_{b_1,b_0}(\tau^3).
\end{pmatrix}\]
We claim that this equals
\[i_e^*\res_\gamma(\rho,\omega_{b_1,b_0})\circ i_e^* \tr_\gamma(\rho',c).\]
Indeed,
\[i_e^*\res_\gamma(\rho,\omega_{b_1,b_0})=\begin{pmatrix}
    \omega_{b_1,b_0}(e)&\omega_{b_1,b_0}(\tau)& \omega_{b_1,b_0}(\tau^2)&
\omega_{b_1,b_0}(\tau^3)
\end{pmatrix}\]
since
\[\xymatrix@C=8pc{
\gamma(b_1\rho')\ar@{=}[d] \ar[r]^{\gamma(\rho, \omega_{b_1,b_0})} & \gamma(b_0) \ar@{=}[d]  \\
C_4\times_e \R^1  \ar@{-->}[r]^-{\begin{pmatrix} +1 & +1 & +1 & +1 
\end{pmatrix}} &  \R^1}\]
Here, again the degree on the $e$-fiber is determined by the monodromy, it is $+1$ since the lift remains in the positive (blue) region. The degrees of the other fibers are determined by equivariance, but since $\tau$ acts trivially on $\gamma(b_0)=\R^1$, they are all equal to $+1$.

For the transfer, we are computing a map
\[ {C_4}_+ \wedge_{C_2} S^{\gamma_0(b_1),b_1} \to  {C_4}_+ \wedge_{e} S^{\gamma_0(b_1\rho),b_1\rho}  \]
which, using that $\gamma_0(b_1)=\R^{1,1}$ and $\gamma_0(b_1\rho')=\R^1$, comes from applying $C_4 \times_{C_2}(-)$ to the composite
\[ S^{1,1} \xrightarrow{\mathrm{collapse}} (C_2)_+ \wedge S^{1,1}  \xrightarrow{\mathrm{shear}} (C_2)_+ \wedge_e S^{1} 
\]
and gluing to $B$.
So we get
\[i_e^*\tr_\gamma(\rho',c)=\begin{pmatrix}
    c(e)& 0\\
    0& c(\tau)\\
    -c(\tau^2)& 0\\
    0&-c(\tau^3)
\end{pmatrix}.\]
Composing with $i_e^*\res_\gamma(\rho, \omega_{b_1,b_0})$ gives
\[\delta^{b_1,b_0}= \res_\gamma(\rho, \omega_{b_1,b_0})\circ \tr_\gamma(\rho',c),\]
as claimed above.

It remains to compute $\delta^{b_2,b_1}$.
For this we first compute $\gamma(\id,\omega_{b_2,b_1}^-)$. We use path lifting on the $e$-fiber and equivariance to get
\[\xymatrix@C=5pc{
\gamma(b_2)\ar@{=}[d] \ar[r]^{\gamma(\id,\omega_{b_2,b_1}^-)} & \gamma(b_1) \ar@{=}[d]  \\
C_4\times_{C_2} \R^{1,1} \ar@{-->}[r]^-{\begin{pmatrix} +1 & 0 \\
0&+1
\end{pmatrix}} &  C_4\times_{C_2}\R^{1,1}.}\]
Indeed, $\omega_{b_2,b_1}^-(e)$ travels only within the positive (blue) region so
\[ \gamma(e,\omega_{b_2,b_1}^-)([e,v]) = [e, v].\]
Then from equivariance we get that
\begin{align*}
\gamma(e,\omega_{b_2,b_1}^-)([\tau,v]) &= \tau \gamma(e,\omega_{b_2,b_1}^-)([e,v]) \\
&= \tau[e,v] \\
&= [\tau,v].
\end{align*}
For $\gamma(\tau,\omega_{b_2,b_1^+})$, we get
\[\xymatrix@C=5pc{
\gamma(b_2)\ar@{=}[d] \ar[r]^{\gamma(\tau,\omega_{b_2,b_1}^+)} & \gamma(b_1) \ar@{=}[d]  \\
C_4\times_{C_2} \R^{1,1}  \ar@{-->}[r]^-{\begin{pmatrix} 0 & +1 \\
-1&0
\end{pmatrix}} &  C_4\times_{C_2}\R^{1,1}
}\]
To see this, note that the $-1$ entry is the monodromy of the path $\omega_{b_2,b_1}^+(e)$, which travels from the positive (blue) region but ends on the negative (gray) region. It follows that 
\[ \gamma(\tau,\omega_{b_2,b_1}^+)([e,v]) = [\tau, -v].\]
Then from equivariance we get that
\begin{align*}
\gamma(\tau,\omega_{b_2,b_1}^+)([\tau,v]) &= \tau \gamma(\tau,\omega_{b_2,b_1}^+)([e,v]) \\
&= \tau[\tau,-v] \\
&= [\tau^2,-v] \\
&= [e,v] .
\end{align*}
This explains the $+1$ entry in the matrix.
 Now, inspecting the degrees of the attaching maps, it is easy to see that 
\begin{align*}i_e^*\delta^{b_2,b_1}&=\begin{pmatrix}
    \omega^-_{b_2,b_1}(eC_2)& 0\\
    0 &\omega^-_{b_2,b_1}(\tau C_2)
\end{pmatrix}+ \begin{pmatrix}
    0&-\omega^+_{b_2,b_1}(\tau C_2)\\
    \omega^+_{b_2,b_1}(e C_2)& 0
\end{pmatrix}\\
&=i_e^*\res_\gamma(\id,\omega_{b_2,b_1}^-)-i_e^*\res_\gamma(\tau, \omega_{b_2,b_1}^+),
\end{align*}
and thus 
\[\delta^{b_2,b_1}=\res_\gamma(\id,\omega_{b_2,b_1}^-)-\res_\gamma(\tau,\omega_{b_2,b_1}^+).\]

So we get the following cochain complex
\[0\rightarrow \Z\{b_0^*\}\xrightarrow{2}\Z\{b_1^*\}\xrightarrow{0} \Z\{b_2^*\}\rightarrow 0\]
which yields the following cohomology groups
\[H^{\gamma+n}(\mathbb{P}(\lambda+1);\uZ)=\begin{cases}
    0 & n=0\\
    \Z/2 & n=1\\
    \Z & n=2.
\end{cases}\]
Taking $\underline{\F}_2$-coefficients we get the cochain complex
\[0\rightarrow \F_2\{b_0^*\}\xrightarrow{0}\F_2\{b_1^*\}\xrightarrow{0} \F_2\{b_2^*\}\rightarrow 0\]
and 
\[H^{\gamma+n}(\mathbb{P}(\lambda+1);\underline{\F}_2)=\begin{cases}
    \F_2 & n=0,1,2\\
    0 & \text{otherwise.}
\end{cases}\]


\providecommand{\bysame}{\leavevmode\hbox to3em{\hrulefill}\thinspace}
\providecommand{\MR}{\relax\ifhmode\unskip\space\fi MR }
\providecommand{\MRhref}[2]{%
  \href{http://www.ams.org/mathscinet-getitem?mr=#1}{#2}
}
\providecommand{\href}[2]{#2}

\end{document}